\titleformat*{\section}{\normalsize\bfseries}
\titleformat*{\subsection}{\normalsize\bfseries}
\def\R{\mathbb{R}}
\def\e{{\varepsilon}}        
\def\p{\partial}
\newtheorem{thm}{Theorem}[section]
\newtheorem{lem}[thm]{Lemma}
\newtheorem{cor}[thm]{Corollary}
\newtheorem{prop}[thm]{Proposition}
\newtheorem{rem}[thm]{Remark}
\begin{document}

\title{
\vspace{-1cm}
\large{\bf ASYMPTOTIC PROFILES OF SOLUTIONS FOR THE GENERALIZED FORNBERG--WHITHAM EQUATION WITH DISSIPATION}}
\author{Ikki Fukuda\\ [.7em]
Faculty of Engineering, Shinshu University}
\date{}
\maketitle

\footnote[0]{2020 Mathematics Subject Classification: 35B40, 35Q53.}

\vspace{-0.75cm}
\begin{abstract}
We consider the Cauchy problem for the generalized Fornberg--Whitham equation
with dissipation. This is one of the nonlinear, nonlocal and dispersive-dissipative
equations. The main topic of this paper is an asymptotic analysis for the
solutions to this problem. We prove that the solution to this problem converges
to the modified heat kernel. Moreover, we construct the second term of
asymptotics for the solutions depending on the degree of the nonlinearity.
In view of those second asymptotic profiles, we investigate the effects
of the dispersion, dissipation and nonlinear terms on the asymptotic behavior
of the solutions.
\end{abstract}

\medskip
\noindent
{\bf Keywords:} 
Fornberg--Whitham equation with dissipation; Cauchy problem; Asymptotic profiles. 

\section{Introduction}

We consider the Cauchy problem for the following integro-differential equation:
\begin{align}\label{VFW}
\begin{split}
&u_{t}+(|u|^{p-1}u)_{x}+\int_{\R}Be^{-b|x-y|}u_{y}(y, t)dy=\mu u_{xx}, \ \ x\in \R, \ t>0,\\
&u(x, 0)=u_{0}(x) , \ \ x\in \R, 
\end{split}
\end{align}
where $u=u(x, t)$ is a real-valued unknown function, $u_{0}(x)$ is a given initial data, $p>2$ and $B, b, \mu>0$. 
The subscripts $t$ and $x$ denote the partial derivatives with respect to $t$ and $x$, respectively. 
This equation is one of a model for nonlinear waves taking into account the dispersive-dissipative processes as well as the convection effects. 
The purpose of this study is to analyze the large time asymptotic behavior of the solutions to \eqref{VFW}. 
Based on that analysis, we would like to investigate the effects of the dispersion, dissipation and nonlinear terms on the asymptotic profiles of the solutions. 

First of all, let us explain about the original problem and background of \eqref{VFW}.
If we take $\mu=0$ and replace $(|u|^{p-1}u)_{x}$ with $\beta uu_{x}$ ($\beta\neq0$) in \eqref{VFW}, we obtain the Fornberg--Whitham equation: 
\begin{align}\label{FW}
\begin{split}
&u_{t}+\beta uu_{x}+\int_{\R}Be^{-b|x-y|}u_{y}(y, t)dy=0, \ \ x\in \R, \ t>0, \\
&u(x, 0)=u_{0}(x) , \ \ x\in \R. 
\end{split}
\end{align}
The above equation~\eqref{FW} was derived by Whitham \cite{W67} and Fornberg--Whitham \cite{FW78} in the late 1900s, as a mathematical model for so-called ``breaking waves". Here, roughly speaking, wave-breaking means blow-up of the spatial derivative of the solution, i.e. $\limsup_{t\uparrow T_{0}}\|u_{x}(\cdot, t)\|_{L^{\infty}}=\infty$ for some $T_{0}>0$. Wave-breaking phenomena for equations with the nonlocal dispersion term, was first studied by Seliger \cite{S68}. He studied the following more general equation called the Whitham equation: 
\begin{align}\label{W-type}
\begin{split}
&u_{t}+\beta uu_{x}+\int_{\R}K(x-y)u_{y}(y, t)dy=0, \ \ x\in \R, \ t>0, \\
&u(x, 0)=u_{0}(x) , \ \ x\in \R, 
\end{split}
\end{align}
where $K(x)$ is a given real even function. In \cite{S68}, he presented that wave-breaking is possible for \eqref{W-type}, by a formal argument. 
Also, the wave-breaking phenomena for solutions to \eqref{W-type} with a regular kernel $K(x)=Be^{-b|x|}$ like \eqref{FW} was studied in \cite{W74}. 
For another perspective on \eqref{W-type}, see e.g. \cite{G78}. In addition, for the related results about more general nonlocal dispersive equations, we can also refer to \cite{NS94}. 

A mathematically rigorous analysis of the wave-breaking phenomena for equations with the nonlocal dispersion term such as \eqref{FW} was first performed by Constantin--Escher \cite{CE98}. 
They gave a sufficient condition for the blow-up of solutions to \eqref{W-type}. After that, their result was improved by Ma--Liu--Qu in \cite{MLQ16}. 
Moreover, Haziot \cite{H17} obtained another blow-up condition for \eqref{FW} with $\beta=1$, which includes only parameter $B$ and does not include $b$. Recently, Itasaka \cite{I21} proposed a new blow-up condition for~\eqref{FW} with $\beta=1$, which includes the both parameters $(B, b)$. 
In addition to the blow-up result, he investigated some relations between the Fornberg--Whitham equation \eqref{FW} and the inviscid Burgers equation.

Next, let us introduce some results related to numerical analysis. Tanaka \cite{T13} and H\"{o}rmann--Okamoto \cite{HO19} studied \eqref{FW} numerically. 
Their results suggested that \eqref{FW} has blow-up solutions and global solutions depending on the initial data $u_{0}(x)$ and the parameters $(B, b)$. 
As we mentioned in the above paragraph, mathematically rigorous blow-up conditions for \eqref{FW} have been studied by many researchers. 
On the other hand, we have not had any mathematical result of the existence of global solutions for \eqref{FW} yet. 
As a well known fact, for suitable regular initial data, the KdV equation always has global solutions. 
This is because the nonlinear effect and the dispersive effect balance each other, and then the energy is conserved. From this perspective, to show the global existence of solutions to \eqref{FW}, it would be effective to investigate some relationship between the nonlinear effect and the dispersive effect in \eqref{FW} and compare the dispersion term with other type ones. As related works, the author and Itasaka \cite{FI21} studied \eqref{VFW1} and \eqref{KdVB} below, based on this consideration. We will explain it in the next paragraph. 

In order to explain about the motivation for our study, 
we shall introduce a known result for the Fornberg--Whitham equation \eqref{FW} with the dissipation term $\mu u_{xx}$. 
If we replace $(|u|^{p-1}u)_{x}$ with $\beta uu_{x}$ ($\beta\neq0$) in \eqref{VFW}, we obtain the following problem: 
\begin{align}\label{VFW1}
\begin{split}
&u_{t}+\beta uu_{x}+\int_{\R}Be^{-b|x-y|}u_{y}(y, t)dy=\mu u_{xx}, \ \ x\in \R, \ t>0,\\
&u(x, 0)=u_{0}(x) , \ \ x\in \R. 
\end{split}
\end{align}
For the above problem \eqref{VFW1}, it is easy to verify that the solution exists globally in time due to the dissipation effect. Namely, it can be said that \eqref{VFW1} is easier than \eqref{FW} to investigate the structure of the nonlocal dispersion term and its interaction with the nonlinear term. 
Therefore, by studying \eqref{VFW1} instead of \eqref{FW}, we can expect to get some hints for analyzing \eqref{FW}. 
From this point of view, the author and Itasaka \cite{FI21} studied \eqref{VFW1}, and obtained the asymptotic profiles of the solutions. Actually, the solution of \eqref{VFW1} converges to the nonlinear diffusion wave which is a modification of the self-similar solution to the following Burgers equation:
\[
\chi_{t}+\frac{2B}{b}\chi_{x}+\beta \chi \chi_{x}=\mu \chi_{xx}, \ \ x\in \R, \ t>0. 
\]
Here, we note that $\chi(x, t)$ is defined by 
\begin{equation*}
\chi(x, t):=\frac{1}{\sqrt{1+t}} \chi_{*} \left(\frac{x-\frac{2B}{b}(1+t)}{\sqrt{1+t}}\right), \ \ x\in \R, \ t>0,
\end{equation*}
where
\begin{equation*}
\chi_{*}(x):=\frac{\sqrt{\mu}}{\beta}\frac{\left(e^{\frac{\beta M}{2\mu}}-1\right)e^{-\frac{x^{2}}{4\mu}}}{\sqrt{\pi}+\left(e^{\frac{\beta M}{2\mu}}-1\right)\int_{x/\sqrt{4\mu}}^{\infty}e^{-y^{2}}dy}, \ \ M:=\int_{\R}u_{0}(x)dx. 
\end{equation*}
More precisely, if $u_{0} \in H^{2}(\R)\cap L^{1}(\R)$, $xu_{0}\in L^{1}(\R)$, $M\neq0$ and $\|u_{0}\|_{H^{2}}+\|u_{0}\|_{L^{1}}$ is sufficiently small, then the solution to \eqref{VFW1} satisfies the following optimal decay estimate: 
\begin{equation*}
\left\|u(\cdot, t)-\chi(\cdot, t)\right\|_{L^{q}}=\left(C_{0}+o(1)\right)(1+t)^{-\frac{1}{2}\left(1-\frac{1}{q}\right)-\frac{1}{2}}\log(1+t) \ \ \text{as} \ \ t\to \infty, 
\end{equation*}
for any $2\le q\le \infty$, where $C_{0}=C_{0}(B, b, \beta, \mu)$ is a certain positive constant. 

Moreover, in \cite{FI21}, the second and the third order asymptotic profiles of the solutions to \eqref{VFW1} also have been obtained. 
Here, we remark that the similar results of them were also obtained for other Burgers type equations such as the KdV--Burgers equation in \cite{HN06, KP05} and the BBM--Burgers equation in \cite{HKN07}. 
In addition, the author and Itasaka also mentioned in \cite{FI21} that the effect of the nonlocal dispersion term on the more higher-order asymptotic profiles. 
Furthermore, they compared the results for \eqref{VFW1} with the ones of the following KdV--Burgers equation: 
\begin{align}\label{KdVB}
\begin{split}
&u_{t}+\frac{2B}{b}u_{x}+\beta uu_{x}+\frac{2B}{b^{3}}u_{xxx}=\mu u_{xx}, \ \ x\in \R, \ t>0,\\
&u(x, 0)=u_{0}(x) , \ \ x\in \R. 
\end{split}
\end{align}
Roughly speaking, the results given in \cite{FI21} suggest that the difference between \eqref{VFW1} and \eqref{KdVB} appears from the third order asymptotic profiles of the solutions (for details see \cite{FI21}). 

On the other hand, compared with \eqref{VFW1}, our target equation \eqref{VFW} has the more general nonlinearity $(|u|^{p-1}u)_{x}$. 
In particular, for $p>2$, the nonlinearity seems weak because $(|u|^{p-1}u)_{x}$ decays faster than $uu_{x}$ as the solution decays. 
For this reason, the asymptotic profile of the solution to \eqref{VFW} is expected to be different from that given in \cite{FI21}.
Therefore, it is worthwhile to study \eqref{VFW} to investigate the structure of the solution and some interaction between nonlinear and dispersion effects. In order to obtain the asymptotic profile for the solution to \eqref{VFW}, we develop the method used in \cite{K99} for the Cauchy problem of some dispersive-dissipative equations, such as the following generalized KdV--Burgers equation: 
\begin{align*}
&u_{t}-\mu u_{xx}+u_{xxx}+(|u|^{p-1}u)_{x}=0, \ \ x\in \R, \ t>0, \\
&u(x, 0)=u_{0}(x) , \ \ x\in \R. 
\end{align*}

\par\noindent
\textbf{\bf{Main Result.}} Now, let us state our main result: 
\begin{thm}\label{thm.main}
Let $p>2$. Assume that $u_{0} \in H^{1}(\R) \cap L^{1}(\R)$ and $E_{0}:=\|u_{0}\|_{H^{1}}+\|u_{0}\|_{L^{1}}$ is sufficiently small. Then, \eqref{VFW} has a unique global mild solution $u \in C([0, \infty); H^{1}(\R))$ satisfying 
\begin{equation}\label{u-decay} 
\left\|\p_{x}^{l}u(\cdot, t)\right\|_{L^{2}}\le CE_{0}(1+t)^{-\frac{1}{4}-\frac{l}{2}}, \ \ t\ge0, \ l=0, 1. 
\end{equation}
Moreover, the solution $u(x, t)$ satisfies the following estimate: 
\begin{equation}\label{u-decay-Lq}
\left\|u(\cdot, t)\right\|_{L^{q}}\le CE_{0}(1+t)^{-\frac{1}{2}\left(1-\frac{1}{q}\right)}, \ \ t\ge0, \ 2\le q\le \infty. 
\end{equation}
Furthermore, if $xu_{0} \in L^{1}(\R)$, then the solution $u(x, t)$ satisfies the following asymptotics: 
\begin{align}
&\lim_{t\to \infty}t^{\frac{1}{2}\left(1-\frac{1}{q}\right)+\frac{p-2}{2}}\left\|u(\cdot, t)-MG_{0}(\cdot, t)+\left(|M|^{p-1}M\right)W_{p}(\cdot, t)\right\|_{L^{q}}=0, \ \ 2<p<3, \label{u-asymp-2<p<3}\\
&\lim_{t\to \infty}\frac{t^{\frac{1}{2}\left(1-\frac{1}{q}\right)+\frac{1}{2}}}{\log t}\left\|u(\cdot, t)-MG_{0}(\cdot, t)+\frac{M^{3}}{4\sqrt{3}\pi\mu}(\log t)\p_{x}G_{0}(\cdot, t)\right\|_{L^{q}}=0, \ \ p=3, \label{u-asymp-p=3}\\
&\lim_{t\to \infty}t^{\frac{1}{2}\left(1-\frac{1}{q}\right)+\frac{1}{2}}\left\|u(\cdot, t)-MG_{0}(\cdot, t)+\left(m+\mathcal{M}\right)\p_{x}G_{0}(\cdot, t)+\frac{2BM}{b^{3}}t\p_{x}^{3}G_{0}(\cdot, t)\right\|_{L^{q}}=0, \ \ p>3,  \label{u-asymp-p>3} 
\end{align}
for any $2\le q \le \infty$. Here, $G_{0}(x, t)$ and $W_{p}(x, t)$ are defined as follows: 
\begin{equation}\label{asymp-func}
G_{0}(x, t):=G\left(x-\frac{2B}{b}t, t\right), \ \  
W_{p}(x, t):=t^{-\frac{p-1}{2}}w_{p}\left(\frac{x-\frac{2B}{b}t}{\sqrt{t}}\right), \ \ x\in \R, \ t>0, 
\end{equation}
where $G(x, t)$ and $w_{p}(x)$ are defined by 
\begin{equation}\label{kernel}
G(x, t):=\frac{1}{\sqrt{4\pi \mu t}}\exp\left({-\frac{x^{2}}{4\mu t}}\right), \ \ 
w_{p}(x):=\frac{d}{dx}\left(\int_{0}^{1}\left(G(1-s)*G^{p}(s)\right)(x)ds\right). 
\end{equation}
Also, we define the constants $M$, $m$ and $\mathcal{M}$ as follows: 
\begin{equation}\label{M-m-notation}
M:=\int_{\R}u_{0}(x)dx, \ \  m:=\int_{\R}xu_{0}(x)dx, \ \ \mathcal{M}:=\int_{0}^{\infty}\int_{\R}(|u|^{p-1}u)(y, \tau)dyd\tau \ \ \text{for} \ \ p>3. 
\end{equation}
\end{thm}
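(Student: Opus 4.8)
The plan is to reformulate the Cauchy problem \eqref{VFW} via Duhamel's formula in terms of the semigroup generated by the linearized operator, and then perform a careful expansion of each term. First I would analyze the linear part $u_t + \int_\R Be^{-b|x-y|}u_y\,dy = \mu u_{xx}$ on the Fourier side: the symbol is $-\mu\xi^2 - i\xi\,\widehat{K}(\xi)$ with $\widehat{K}(\xi) = \frac{2Bb}{b^2+\xi^2}$, so the linear propagator $S(t)$ has symbol $\exp\bigl(-\mu\xi^2 t - i\xi t\,\frac{2Bb}{b^2+\xi^2}\bigr)$. Expanding $\frac{2Bb}{b^2+\xi^2} = \frac{2B}{b} - \frac{2B}{b^3}\xi^2 + O(\xi^4)$ for small $\xi$ shows that to leading order the linear evolution is the heat kernel $G$ translated with speed $\frac{2B}{b}$, i.e. $G_0(x,t)$, with the next correction coming from the $\frac{2B}{b^3}\xi^2$ term, which produces the $\p_x^3 G_0$ contribution (after accounting for the $-i\xi$ factor from $u_y$). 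So I would establish a linear asymptotic expansion: $S(t)u_0 \approx M G_0(\cdot,t) - m\,\p_x G_0(\cdot,t) - \frac{2BM}{b^3}t\,\p_x^3 G_0(\cdot,t) + (\text{lower order})$ in $L^q$, using the hypotheses $u_0\in L^1$, $xu_0\in L^1$ to control $\widehat{u_0}(\xi)-M+im\xi$ near $\xi=0$, and the exponential decay of the symbol for the high-frequency part. This is the standard decomposition-of-the-frequency-domain argument as in \cite{K99}.

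Next, for the nonlinear Duhamel term $-\int_0^t S(t-\tau)\,\p_x(|u|^{p-1}u)(\tau)\,d\tau$, I would use the a priori decay estimates \eqref{u-decay}, \eqref{u-decay-Lq} already granted by the first part of the theorem. The key point is that $\||u(\tau)|^{p-1}u(\tau)\|_{L^1} \le \|u(\tau)\|_{L^p}^p \le C(1+\tau)^{-\frac{p-1}{2}}$, and since $p>2$ this is integrable in $\tau$ precisely when $p>3$, marginally divergent (log) when $p=3$, and divergent when $2<p<3$. This trichotomy is the source of the three cases in the theorem:
\begin{itemize}
\item For $p>3$: the total mass $\mathcal{M}=\int_0^\infty\int_\R |u|^{p-1}u\,dy\,d\tau$ converges, the nonlinear term behaves like $-\mathcal{M}\,\p_x G_0$ to leading order (one derivative from the $\p_x$, and $S(t-\tau)\p_x(\cdots)$ concentrates its mass), and this combines with the linear $-m\,\p_x G_0$ term to give the coefficient $m+\mathcal{M}$; the $\frac{2BM}{b^3}t\,\p_x^3 G_0$ term is purely linear in origin.
\item For $2<p<3$: the nonlinear term dominates the time-dependent correction; one replaces $u$ by $MG_0$ inside $|u|^{p-1}u$, so $|u|^{p-1}u \approx |M|^{p-1}M\,G_0^p$, and carrying out the self-similar substitution yields exactly $-(|M|^{p-1}M)\,W_p(\cdot,t)$ with $W_p$ as in \eqref{asymp-func}–\eqref{kernel}. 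The rate $t^{-(p-2)/2}$ matches the gap between $(1+\tau)^{-(p-1)/2}$ integrated and the leading $t^{-1/2(1-1/q)}$ scale.
\item For $p=3$: one gets the logarithmic resonance; $|u|^{2}u\approx M^3 G_0^3$, and $\int_0^t \|G_0^3(\tau)\|_{L^1}\,d\tau \sim \text{const}\cdot\log t$, the constant being $\int_\R G(x,1)^3\,dx$-type integral, which after the explicit Gaussian computation gives $\frac{1}{4\sqrt3\pi\mu}$; the result is $\frac{M^3}{4\sqrt3\pi\mu}(\log t)\,\p_x G_0$.
\end{itemize}

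The main technical obstacle will be making the replacement of $u$ by its leading profile $MG_0$ inside the nonlinearity quantitatively, with an error that beats the target rate uniformly for $2\le q\le\infty$. Concretely one must estimate $\int_0^t \|S(t-\tau)\p_x(|u|^{p-1}u - |M|^{p-1}M\,G_0^p)(\tau)\|_{L^q}\,d\tau$, splitting the integral at $\tau=t/2$: on $[0,t/2]$ one uses the smoothing of $S(t-\tau)$ (the factor $(t-\tau)^{-1/2(1-1/q)-1/2}$ from $\p_x$) together with the $L^1$ bound on the difference, which requires the Lipschitz-type estimate $\bigl||u|^{p-1}u-|v|^{p-1}v\bigr|\le C(|u|^{p-1}+|v|^{p-1})|u-v|$ and the convergence rate of $u-MG_0$ to zero in the appropriate norms (which itself must be bootstrapped — this is the delicate self-improving step); on $[t/2,t]$ one uses the decay of $u$ and $G_0$ directly. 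A secondary obstacle is handling the high-frequency part of $S(t)$ when proving the linear expansion together with the nonlinear contributions, but the exponential decay $e^{-c t}$ of the symbol away from $\xi=0$ makes those contributions negligible. Once these estimates are assembled and the explicit Gaussian integrals in the $p=3$ case are evaluated, \eqref{u-asymp-2<p<3}, \eqref{u-asymp-p=3}, \eqref{u-asymp-p>3} follow by combining the linear and nonlinear expansions and the triangle inequality.
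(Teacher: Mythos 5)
Your proposal follows essentially the same route as the paper: Duhamel's formula with the linear propagator, Taylor expansion of the nonlocal symbol producing $G_{0}$ and the $\frac{2B}{b^{3}}t\p_{x}^{3}G_{0}$ correction, the trichotomy driven by the integrability of $\left\|\left(|u|^{p-1}u\right)(\cdot,\tau)\right\|_{L^{1}}\sim(1+\tau)^{-\frac{p-1}{2}}$, the replacement of $u$ by $MG_{0}$ inside the nonlinearity with the self-similar computation yielding $W_{p}$, and the Gaussian integral $\int_{\R}G^{3}(\eta,\tau)\,d\eta=\frac{\tau^{-1}}{4\sqrt{3}\pi\mu}$ producing the logarithm for $p=3$. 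The only remarks worth making are that the step you call ``delicate self-improving'' is in the paper a one-shot estimate (Proposition~\ref{prop.asymp-nonlinear}) obtained directly from the integral equation and the already-established decay of $u$, with no iteration needed, and that the global existence and decay estimates \eqref{u-decay}--\eqref{u-decay-Lq}, which you take as given, are themselves part of the statement and require the contraction-mapping argument in the weighted space $X$.
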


\begin{rem}
{\rm
From the above result, we see that the first asymptotic profile of the solution to \eqref{VFW} is given by $MG_{0}(x, t)$ for all $p>2$. 
As we can see from the shape of $MG_{0}(x, t)$, the strongest effect is the dissipation effect and the dispersion term acts as a convection effect. 
This means that the dispersion effect is not so strong in the first term of the asymptotics. 
A result analogous to this conclusion has been obtained for the more general dispersive-dissipative nonlinear equation in Theorem~4.13 of ~\cite{HKNS06}. On the other hand, the present result provides the more specific asymptotic formula for the solution, including the second asymptotic profiles.
}
\end{rem}

\begin{rem}
{\rm
The second asymptotic profiles of the solution are divided into three cases depending on $p$. 
In particular, note that for $2<p\le3$, the effect of the dispersion term does not appear strongly in the second asymptotic profiles as well as in the first asymptotic profile. 
This can be said to indicate that the effect of the nonlinearity is stronger than the effect of the dispersion term. 
On the other hand, note that for $p>3$, the second asymptotic profile contains all of the dissipation, dispersion, and nonlinear effects.
}
\end{rem}

\begin{rem}
{\rm
Some similar results for \eqref{u-asymp-2<p<3}, \eqref{u-asymp-p=3} and \eqref{u-asymp-p>3} and other related results are also obtained for several dissipative type equations, such as the generalized KdV--Burgers equation \cite{K97, K99}, the generalized BBM--Burgers equation \cite{K99, PZ02} and also the convection-diffusion equation \cite{Z93}. 
}
\end{rem}

\begin{rem}
{\rm
In the case of $p=3$, we can actually prove the more stronger result than \eqref{u-asymp-p=3}. 
More precisely, the more improved asymptotic rate can be obtained. 
For details, see Theorem~\ref{thm.nl-p=3} below.
}
\end{rem}

Moreover, in view of the second asymptotic profiles, we are able to obtain the optimal asymptotic rates to the modified heat kernel $G_{0}(x, t)$ as follows: 
\begin{cor}
Under the same assumptions in Theorem~\ref{thm.main}, we have the following estimate: 
\begin{equation*}
\left\|u(\cdot, t)-MG_{0}(\cdot, t)\right\|_{L^{q}}=
\begin{cases}
\displaystyle \left(|M|^{p}\left\|w_{p}\right\|_{L^{q}}+o(1)\right)t^{-\frac{1}{2}\left(1-\frac{1}{q}\right)-\frac{p-2}{2}},&\ 2<p<3, \\[0.5em]
\displaystyle \left(\frac{|M|^{3}}{4\sqrt{3}\pi \mu}\left\|\p_{x}G(\cdot, 1)\right\|_{L^{q}}+o(1)\right)t^{-\frac{1}{2}\left(1-\frac{1}{q}\right)-\frac{1}{2}}\log t,&\ p=3, \\[1em]
\displaystyle \left(\left\|(m+\mathcal{M})\p_{x}G(\cdot, 1)+\frac{2BM}{b^{3}}\p_{x}^{3}G(\cdot, 1)\right\|_{L^{q}}+o(1)\right)t^{-\frac{1}{2}\left(1-\frac{1}{q}\right)-\frac{1}{2}},&\ p>3,  
\end{cases}
\end{equation*}
as $t\to \infty$, for any $2\le q\le \infty$.  
\end{cor}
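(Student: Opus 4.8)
\medskip
\noindent\textbf{Proof sketch for the Corollary.}
The plan is to read the estimate off from Theorem~\ref{thm.main} by the triangle inequality, reducing everything to evaluating the $L^q$-norms of the explicit second-order profiles and checking that their decay rate coincides with that of the error terms in \eqref{u-asymp-2<p<3}--\eqref{u-asymp-p>3}. For this I would first record two elementary facts. Because $G_0(x,t)=G(x-\tfrac{2B}{b}t,t)$, translation invariance of the $L^q$-norm gives $\|\p_x^l G_0(\cdot,t)\|_{L^q}=\|\p_x^l G(\cdot,t)\|_{L^q}$ for every $l\ge0$; and from the self-similar form $G(x,t)=t^{-1/2}G(x/\sqrt t,1)$ one has $\p_x^l G(x,t)=t^{-(l+1)/2}(\p_x^l G)(x/\sqrt t,1)$, so after the substitution $y=x/\sqrt t$,
\[
\|\p_x^l G(\cdot,t)\|_{L^q}=t^{-\frac12\left(1-\frac1q\right)-\frac l2}\|\p_x^l G(\cdot,1)\|_{L^q},\qquad 2\le q\le\infty,
\]
with the usual convention $1/q=0$ when $q=\infty$. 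The same rescaling applied to $W_p(x,t)=t^{-(p-1)/2}w_p\big((x-\tfrac{2B}{b}t)/\sqrt t\big)$ gives $\|W_p(\cdot,t)\|_{L^q}=t^{-\frac12(1-\frac1q)-\frac{p-2}{2}}\|w_p\|_{L^q}$; all these norms are finite since $w_p$, being the $x$-derivative of a time integral of convolutions of Gaussians, belongs to $L^1(\R)\cap L^\infty(\R)$, and $\p_x G(\cdot,1),\ \p_x^3 G(\cdot,1)$ are Schwartz functions.

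Next I would treat the three ranges of $p$ separately. For $2<p<3$, since $\big||M|^{p-1}M\big|=|M|^p$, the asymptotics \eqref{u-asymp-2<p<3} and the triangle inequality give
\[
\Big|\,\|u(\cdot,t)-MG_0(\cdot,t)\|_{L^q}-|M|^{p}\|W_p(\cdot,t)\|_{L^q}\,\Big|=o\big(t^{-\frac12(1-\frac1q)-\frac{p-2}{2}}\big),
\]
and inserting the norm of $W_p$ produces the first line of the Corollary. For $p=3$, \eqref{u-asymp-p=3} says that $u(\cdot,t)-MG_0(\cdot,t)$ equals $-\tfrac{M^3}{4\sqrt3\pi\mu}(\log t)\p_x G_0(\cdot,t)$ up to an error of order $o\big((\log t)\,t^{-\frac12(1-\frac1q)-\frac12}\big)$; combining this with $\|\p_x G_0(\cdot,t)\|_{L^q}=t^{-\frac12(1-\frac1q)-\frac12}\|\p_x G(\cdot,1)\|_{L^q}$ and $|M^3|=|M|^3$ yields the second line. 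For $p>3$, \eqref{u-asymp-p>3} gives $u(\cdot,t)-MG_0(\cdot,t)$ as $-\big[(m+\mathcal M)\p_x G_0(\cdot,t)+\tfrac{2BM}{b^3}t\,\p_x^3 G_0(\cdot,t)\big]$ up to $o\big(t^{-\frac12(1-\frac1q)-\frac12}\big)$; by the two scaling facts the bracket is a translate of $t^{-1}\Phi(\cdot/\sqrt t)$ with $\Phi:=(m+\mathcal M)\p_x G(\cdot,1)+\tfrac{2BM}{b^3}\p_x^3G(\cdot,1)$, hence has $L^q$-norm $t^{-\frac12(1-\frac1q)-\frac12}\|\Phi\|_{L^q}$, and this is the third line.

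There is no genuine obstacle here: the Corollary is a bookkeeping consequence of Theorem~\ref{thm.main}. The only points requiring a word of justification are that the powers of $t$ produced by the self-similar rescaling must match the decay rates of the remainders in \eqref{u-asymp-2<p<3}--\eqref{u-asymp-p>3} --- which is precisely why those exponents appear in the statement --- and the $L^q$-integrability of $w_p,\ \p_x G(\cdot,1),\ \p_x^3 G(\cdot,1)$, immediate from their explicit form. Finally, if one also wishes to interpret the estimates as genuinely optimal decay rates, it suffices that the leading coefficients not vanish: when $M\neq0$ one has $|M|^p\|w_p\|_{L^q}>0$ and $\tfrac{|M|^3}{4\sqrt3\pi\mu}\|\p_x G(\cdot,1)\|_{L^q}>0$, while $\|\Phi\|_{L^q}>0$ because $\p_x G(\cdot,1)$ and $\p_x^3 G(\cdot,1)$ are linearly independent and $\tfrac{2BM}{b^3}\neq0$.
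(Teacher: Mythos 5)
Your proposal is correct and is exactly the argument the paper intends: the Corollary is read off from \eqref{u-asymp-2<p<3}--\eqref{u-asymp-p>3} via the reverse triangle inequality together with the self-similar scaling identities $\|\p_x^l G_0(\cdot,t)\|_{L^q}=t^{-\frac12(1-\frac1q)-\frac l2}\|\p_x^l G(\cdot,1)\|_{L^q}$ and $\|W_p(\cdot,t)\|_{L^q}=t^{-\frac12(1-\frac1q)-\frac{p-2}{2}}\|w_p\|_{L^q}$ (and, for $p>3$, the observation that the whole bracket rescales as $t^{-1}\Phi((\cdot-\tfrac{2B}{b}t)/\sqrt t)$). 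The paper presents the Corollary without a separate proof precisely because it is this bookkeeping consequence of Theorem~\ref{thm.main}; your additional remarks on the non-vanishing of the leading coefficients go slightly beyond what is asserted but are sound.
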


The rest of this paper is organized as follows. 
First, we prove the global existence and the time decay estimates for the solutions to \eqref{VFW} in Section~2. 
Next, in Section~3, we introduce some auxiliary lemmas and propositions to prove the main result. 
Finally, we give the proof of our main result Theorem~\ref{thm.main} in Section~4. 
This section is divided into three subsections. Subsection~4.1 is for $2<p<3$, Subsection~4.2 is for $p=3$ and Subsection~4.3 is for $p>3$. 
The main difficulty of the proof of Theorem~\ref{thm.main}, especially the proof of \eqref{u-asymp-p>3}, 
is how to treat the nonlocal dispersion term $\int_{\R}Be^{-b|x-y|}u_{y}(y, t)dy$. 
To overcome this difficulty, we transform this term to $\frac{2B}{b}\p_{x}u+\frac{2B}{b^{3}}\p_{x}^{3}u+\frac{2B}{b^{3}}(b^{2}-\p_{x}^{2})^{-1}\p_{x}^{5}u$ via the Fourier transform, and apply the idea of the asymptotic analysis for the generalized KdV--Burgers equation used in \cite{K99}. 

\medskip
\par\noindent
\textbf{\bf{Notations.}} For $1\le p \le \infty$, $L^{p}(\R)$ denotes the usual Lebesgue spaces. 
Then, for $f, g \in L^{1}(\R)\cap L^{2}(\R)$, we denote the Fourier transform of $f$ and the inverse Fourier transform of $g$ as follows:
\begin{align*}
\hat{f}(\xi):=\mathcal{F}[f](\xi)=\frac{1}{\sqrt{2\pi}}\int_{\R}e^{-ix\xi}f(x)dx, \quad \mathcal{F}^{-1}[g](x):=\frac{1}{\sqrt{2\pi}}\int_{\R}e^{ix\xi}g(\xi)d\xi.
\end{align*}
Also, for $s\ge0$, we define the Sobolev spaces by 
\begin{equation*}
H^{s}(\R):=\left\{f\in L^{2}(\R); \ \left\|f\right\|_{H^{s}}:=\left(\int_{\R}\left(1+\xi^{2}\right)^{s}|\hat{f}(\xi)|^{2}d\xi \right)^{\frac{1}{2}}<\infty \right\}. 
\end{equation*}
Moreover, we denote $C([0, \infty); H^{s}(\R))$ as the space of $H^{s}$-valued continuous functions on $[0, \infty)$.
\smallskip

Throughout this paper, $C$ denotes various positive constants, which may vary from line to line during computations. Also, it may depend on the norm of the initial data. However, we note that it does not depend on the space variable $x$ and the time variable $t$. 

\section{Global Existence and Decay Estimates}  

In this section, we would like to show the global existence and the decay estimates \eqref{u-decay} and \eqref{u-decay-Lq} of the solutions to~\eqref{VFW}. 
To discuss them, we consider the following integral equation associated with \eqref{VFW}:
\begin{equation}\label{integral-eq}
u(t)=T(t)*u_{0}-\int_{0}^{t}\p_{x}T(t-\tau)*(|u|^{p-1}u)(\tau)d\tau,  
\end{equation}
where the integral kernel $T(x, t)$ is defined by 
\begin{equation}\label{T-linear}
T(x, t):=\frac{1}{\sqrt{2\pi}}\mathcal{F}^{-1}\left[\exp\left(-\mu t\xi^{2}-\frac{i2Bbt\xi}{b^{2}+\xi^{2}}\right)\right](x).
\end{equation}
For this function, we note that the following estimate holds. The proof is the same as Lemma 2.2 in \cite{F19}.
\begin{lem}\label{lem.linear-L2}
Let $s$ be a non-negative integer. Suppose $f\in H^{s}(\R) \cap L^{1}(\R)$. Then, the estimate
\begin{equation}\label{linear-L2}
\left\| \p^{l}_{x}(T(t)*f)\right\|_{L^{2}} \le C(1+t)^{-\frac{1}{4}-\frac{l}{2}}\left\|f\right\|_{L^{1}}+Ce^{-\mu t}\left\| \p^{l}_{x}f\right\|_{L^{2}}, \ \ t\ge0
\end{equation}
holds for any integer $l$ satisfying $0\le l \le s$.
\end{lem}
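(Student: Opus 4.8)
The plan is to work on the Fourier side and split the frequency domain into low and high frequencies, exploiting the explicit symbol $\sigma(\xi,t) = \exp\left(-\mu t\xi^{2} - \frac{i2Bbt\xi}{b^{2}+\xi^{2}}\right)$ appearing in \eqref{T-linear}. By Plancherel's theorem, $\left\|\p_{x}^{l}(T(t)*f)\right\|_{L^{2}}^{2} = \int_{\R}|\xi|^{2l}|\sigma(\xi,t)|^{2}|\hat f(\xi)|^{2}\,d\xi$, and since the phase is purely imaginary, $|\sigma(\xi,t)| = e^{-\mu t\xi^{2}}$. Thus the dispersion term is harmless for $L^{2}$ estimates of this type; only the parabolic factor $e^{-\mu t\xi^{2}}$ matters. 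First I would fix a cutoff at $|\xi| = 1$ (or any fixed radius independent of $t$). On the low-frequency region $|\xi|\le 1$, I bound $|\hat f(\xi)| \le (2\pi)^{-1/2}\|f\|_{L^{1}}$ and estimate $\int_{|\xi|\le 1}|\xi|^{2l}e^{-2\mu t\xi^{2}}\,d\xi$. A change of variables $\eta = \sqrt{t}\,\xi$ shows this integral is $O((1+t)^{-\frac{1}{2}-l})$ for $t\ge 1$, and it is trivially bounded for $0\le t\le 1$; combining, one gets the factor $(1+t)^{-\frac{1}{4}-\frac{l}{2}}$ after taking square roots, which yields the first term on the right-hand side of \eqref{linear-L2}.

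On the high-frequency region $|\xi|\ge 1$, I would use $e^{-\mu t\xi^{2}} \le e^{-\mu t}$ (valid since $\xi^{2}\ge 1$ there) together with the trivial bound $|\hat f(\xi)|\le$ itself, giving
\begin{equation*}
\int_{|\xi|\ge 1}|\xi|^{2l}e^{-2\mu t\xi^{2}}|\hat f(\xi)|^{2}\,d\xi \le e^{-2\mu t}\int_{\R}|\xi|^{2l}|\hat f(\xi)|^{2}\,d\xi = e^{-2\mu t}\|\p_{x}^{l}f\|_{L^{2}}^{2},
\end{equation*}
which after taking square roots produces the second term $Ce^{-\mu t}\|\p_{x}^{l}f\|_{L^{2}}$. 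Since $l\le s$ and $f\in H^{s}(\R)$, this norm is finite. Adding the two pieces and using $\sqrt{a+b}\le\sqrt{a}+\sqrt{b}$ gives \eqref{linear-L2}.

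The only mildly delicate point is the low-frequency scaling argument: one must check that the substitution $\eta = \sqrt{t}\,\xi$ genuinely gives the claimed decay rate uniformly, i.e. that $\int_{|\eta|\le\sqrt{t}}|\eta|^{2l}e^{-2\mu\eta^{2}}\,d\eta$ stays bounded (it converges to a finite Gaussian moment), so that the full $t$-dependence is carried by the Jacobian $t^{-\frac{1}{2}-l}$. Matching this with the small-time regime to produce the $(1+t)$ form is routine. Since the symbol's imaginary part never enters the modulus, this proof is essentially identical to the one for the generalized KdV--Burgers kernel in \cite{F19}, which is why the statement only cites that reference; I would simply remark that the nonlocal dispersion contributes only to the (bounded) phase and therefore does not affect the decay estimate.
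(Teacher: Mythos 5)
Your proof is correct and is essentially the standard argument the paper invokes by reference to Lemma~2.2 of \cite{F19}: Plancherel, the observation that the nonlocal dispersion contributes only a unimodular phase so $|\hat T|$ reduces to the heat symbol $e^{-\mu t\xi^{2}}$, and a low/high frequency split giving the $(1+t)^{-\frac14-\frac l2}\|f\|_{L^{1}}$ and $e^{-\mu t}\|\p_x^l f\|_{L^2}$ contributions respectively. No gaps; the scaling check at low frequency and the matching for $0\le t\le1$ are handled as you describe.
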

Now, let us prove the global existence and the decay estimates \eqref{u-decay} and \eqref{u-decay-Lq} of the solutions to~\eqref{VFW}. 
We give the proof of them by slightly modifying the method used in Theorem~2.2 of \cite{FI21}. 
\begin{prop}\label{thm.GE-decay}
Let $p\ge2$. Assume that $u_{0} \in H^{1}(\R) \cap L^{1}(\R)$ and $E_{0}=\|u_{0}\|_{H^{1}}+\|u_{0}\|_{L^{1}}$ is sufficiently small. Then, \eqref{VFW} has a unique global mild solution $u \in C([0, \infty); H^{1}(\R))$ satisfying 
\begin{equation}\tag{\ref{u-decay}}
\left\|\p_{x}^{l}u(\cdot, t)\right\|_{L^{2}}\le CE_{0}(1+t)^{-\frac{1}{4}-\frac{l}{2}}, \ \ t\ge0, \ l=0, 1. 
\end{equation}
Moreover, the solution satisfies the following estimate: 
\begin{equation}\tag{\ref{u-decay-Lq}}
\left\|u(\cdot, t)\right\|_{L^{q}}\le CE_{0}(1+t)^{-\frac{1}{2}\left(1-\frac{1}{q}\right)}, \ \ t\ge0, \ 2\le q \le \infty.
\end{equation}
\end{prop}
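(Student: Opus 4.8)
The plan is to obtain the solution together with the decay estimates~\eqref{u-decay} by a contraction argument for the integral equation~\eqref{integral-eq}. Set
\[
X:=\Big\{u\in C([0,\infty);H^{1}(\R)):\ \|u\|_{X}:=\sup_{t\ge0}\Big[(1+t)^{\frac14}\|u(\cdot,t)\|_{L^{2}}+(1+t)^{\frac34}\|\partial_{x}u(\cdot,t)\|_{L^{2}}\Big]<\infty\Big\},
\]
and define $\Phi[u](t):=T(t)*u_{0}-\int_{0}^{t}\partial_{x}T(t-\tau)*(|u|^{p-1}u)(\tau)\,d\tau$. I would show that, when $E_{0}$ is sufficiently small, $\Phi$ is a contraction on the ball $B_{R}:=\{u\in X:\|u\|_{X}\le R\}$ with $R:=2CE_{0}$; its fixed point is then the desired unique global mild solution, and the bound $\|u\|_{X}\le R$ is precisely~\eqref{u-decay}. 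For the linear part, Lemma~\ref{lem.linear-L2} with $s=1$ gives at once $\|\partial_{x}^{l}(T(t)*u_{0})\|_{L^{2}}\le CE_{0}(1+t)^{-\frac14-\frac l2}$ for $l=0,1$, so the whole issue is the Duhamel term.

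For the Duhamel term I would split the time integral at $\tau=t/2$ (the case $0\le t\le1$ is elementary: one keeps one derivative on the nonlinearity over the whole interval and uses the integrability of $(t-\tau)^{-3/4}$, so assume $t\ge1$). On $[0,t/2]$, where $t-\tau\sim t$, I keep all spatial derivatives on the kernel and use the elementary identity $\|\partial_{x}^{k}T(s)\|_{L^{2}}=C_{k}s^{-\frac14-\frac k2}$, which follows from Plancherel's theorem because the dispersive symbol $\exp\big(-\frac{i2Bbt\xi}{b^{2}+\xi^{2}}\big)$ has modulus one; together with Young's inequality $L^{2}*L^{1}\hookrightarrow L^{2}$ this bounds the corresponding piece by $C(1+t)^{-\frac34-\frac l2}\int_{0}^{t/2}\|(|u|^{p-1}u)(\tau)\|_{L^{1}}\,d\tau$. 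On $[t/2,t]$, where $t-\tau$ may be small, I instead move one derivative onto the nonlinearity, writing $\partial_{x}^{l+1}T(t-\tau)*(|u|^{p-1}u)=\partial_{x}^{l}T(t-\tau)*\partial_{x}(|u|^{p-1}u)$ with $\partial_{x}(|u|^{p-1}u)=p|u|^{p-1}\partial_{x}u$, so that $\|\partial_{x}^{l}T(t-\tau)\|_{L^{2}}=C(t-\tau)^{-\frac14-\frac l2}$ stays integrable in $\tau$ near $t$ for $l=0,1$; this piece is bounded by $C\int_{t/2}^{t}(t-\tau)^{-\frac14-\frac l2}\|p|u|^{p-1}\partial_{x}u(\tau)\|_{L^{1}}\,d\tau$. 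The spatial norms of the nonlinearity are controlled by the one-dimensional Gagliardo--Nirenberg inequality: $\|u\|_{L^{p}}^{p}\le C\|u\|_{L^{2}}^{\frac p2+1}\|\partial_{x}u\|_{L^{2}}^{\frac p2-1}\le C\|u\|_{X}^{p}(1+\tau)^{-\frac{p-1}{2}}$ and $\|p|u|^{p-1}\partial_{x}u\|_{L^{1}}\le C\|u\|_{L^{2}}^{\frac p2}\|\partial_{x}u\|_{L^{2}}^{\frac p2}\le C\|u\|_{X}^{p}(1+\tau)^{-\frac p2}$, where $p\ge2$ is used so that the interpolation exponents are admissible.

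Carrying out the $\tau$-integrations and using $(1+\tau)\sim(1+t)$ on $[t/2,t]$, all four contributions are seen to be bounded by $C\|u\|_{X}^{p}(1+t)^{-\frac14-\frac l2}$; the relevant exponent inequalities hold for every $p\ge2$ (for $2<p<3$ the integral $\int_{0}^{t/2}(1+\tau)^{-\frac{p-1}{2}}\,d\tau$ produces an extra factor $(1+t)^{\frac{3-p}{2}}$ that is exactly absorbed, while $p=3$ produces only a harmless $\log t$, and at $p=2$ the nonlinear contribution decays exactly like the linear one). Hence $\|\Phi[u]\|_{X}\le CE_{0}+C\|u\|_{X}^{p}$, so $\Phi$ maps $B_{R}$ into itself as soon as $C(2CE_{0})^{p-1}\le\tfrac12$; the same estimates applied to $|u|^{p-1}u-|v|^{p-1}v$, using $\big||u|^{p-1}u-|v|^{p-1}v\big|\le C\big(|u|^{p-1}+|v|^{p-1}\big)|u-v|$, show that $\Phi$ is a contraction on $B_{R}$. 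The unique fixed point $u\in X$ is the required global mild solution; its membership in $C([0,\infty);H^{1}(\R))$ follows from the continuity in $t$ of each term of~\eqref{integral-eq} in the $H^{1}$-norm, and uniqueness in the full class $C([0,\infty);H^{1})$ follows from a Gronwall argument on the difference of two solutions. Finally, \eqref{u-decay-Lq} is deduced from~\eqref{u-decay} by the Gagliardo--Nirenberg interpolation $\|u(\cdot,t)\|_{L^{q}}\le C\|u(\cdot,t)\|_{L^{2}}^{1-a}\|\partial_{x}u(\cdot,t)\|_{L^{2}}^{a}$ with $a=\frac12-\frac1q\in[0,\tfrac12]$ for $2\le q\le\infty$, which gives $\|u(\cdot,t)\|_{L^{q}}\le CE_{0}(1+t)^{-\frac14(1-a)-\frac34 a}=CE_{0}(1+t)^{-\frac12(1-\frac1q)}$.

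I expect the main obstacle to be the bookkeeping of the time-decay exponents in the split Duhamel estimate: one must verify that they close for all $p\ge2$, and in particular in the borderline regime $p=2$ where the nonlinear contribution decays no faster than the linear one, while simultaneously respecting the limited regularity $u\in H^{1}$ (so that at most one spatial derivative may be placed on $|u|^{p-1}u$), which is exactly what forces the dichotomy between ``all derivatives on the kernel'' for $t-\tau$ large and ``one derivative on the nonlinearity'' for $t-\tau$ small. The nonlocal dispersion term is not a genuine difficulty at this stage, since it has been absorbed into the linear kernel $T$ and its symbol, being purely imaginary, does not affect any of the $L^{2}$-based estimates used above.
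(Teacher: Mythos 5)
Your argument is correct and rests on the same skeleton as the paper's proof: a contraction on exactly the same weighted space $X$, the linear part via Lemma~\ref{lem.linear-L2}, a split of the Duhamel integral at $\tau=t/2$ with ``all derivatives on the kernel'' for $\tau\le t/2$ and ``one derivative moved onto the nonlinearity'' for $\tau\ge t/2$, the same $L^{1}$-decay rates $(1+\tau)^{-\frac{p-1}{2}}$ and $(1+\tau)^{-\frac{p}{2}}$ for $|u|^{p-1}u$ and $\p_{x}(|u|^{p-1}u)$, and interpolation for \eqref{u-decay-Lq}. Where you genuinely diverge is in how the kernel acts on the nonlinearity: you stay entirely in physical space, using the exact identity $\|\p_{x}^{k}T(s)\|_{L^{2}}=C_{k}s^{-\frac14-\frac{k}{2}}$ (valid by Plancherel since the dispersive factor is unimodular) together with Young's inequality $L^{2}*L^{1}\hookrightarrow L^{2}$ on both subintervals. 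The paper instead decomposes in frequency: on $|\xi|\le1$ it argues exactly as you do (with $\sup_{|\xi|\le1}|\hat f|\le C\|f\|_{L^{1}}$ playing the role of Young), but on $|\xi|\ge1$ it runs a separate Schwarz-in-$\tau$ argument using the uniform exponential damping $e^{-\mu(t-\tau)\xi^{2}}\le e^{-\mu(t-\tau)}$ and the $L^{2}$-norms \eqref{differ-L2}--\eqref{differ-L2x} of the nonlinearity. Your route is shorter and needs only the $L^{1}$-based nonlinear estimates, at the price of the singular factors $(t-\tau)^{-\frac34-\frac{l}{2}}$, which you correctly neutralize by the $t/2$-splitting and the separate treatment of $0\le t\le1$; the paper's high-frequency branch avoids any singularity at $\tau=t$ altogether and is the more robust template when less time-decay of the kernel's $L^{2}$-norm is available. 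Your exponent bookkeeping closes for all $p\ge2$, including the borderline $p=2$, exactly as in the paper. The only cosmetic difference in the last step is that you invoke Gagliardo--Nirenberg directly for $\|u\|_{L^{q}}$, while the paper first passes to $L^{\infty}$ by the Sobolev inequality and then interpolates between $L^{2}$ and $L^{\infty}$; these are the same estimate.
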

\begin{proof}
We solve the integral equation \eqref{integral-eq} by using the contraction mapping principle for the mapping 
\begin{equation}\label{nonlinear-map}
N[u]:=T(t)*u_{0}-\int_{0}^{t}\p_{x}T(t-\tau)*(|u|^{p-1}u)(\tau)d\tau. 
\end{equation}
Let us introduce the Banach space $X$ as follows:
\begin{equation}\label{space-X}
X:=\left\{u\in C([0, \infty); H^{1}(\R)); \ \left\|u\right\|_{X}:=\sup_{t\ge0}\,(1+t)^{\frac{1}{4}}\left\|u(\cdot, t)\right\|_{L^{2}}+\sup_{t\ge0}\,(1+t)^{\frac{3}{4}}\left\|u_{x}(\cdot, t)\right\|_{L^{2}}<\infty \right\}.
\end{equation}
Now, we set $N_{0}:=T(t)*u_{0}$. Then, it follows from Lemma \ref{lem.linear-L2} that 
\begin{equation}\label{N0-est}
\exists C_{0}>0 \ \ \text{s.t.} \ \ \left\|N_{0}\right\|_{X}\le C_{0}E_{0}.
\end{equation}
In what follows, we apply the contraction mapping principle to \eqref{nonlinear-map} on the closed subset $Y$ of $X$ below: 
\begin{equation*}
Y:=\left\{u\in X; \ \left\|u\right\|_{X}\le2C_{0}E_{0}\right\}.
\end{equation*}
In order to complete the proof, it is sufficient to show the following estimates: 
\begin{equation}\label{contraction-1}
\left\|N[u]\right\|_{X}\le2C_{0}E_{0}, \ \ u \in Y,
\end{equation}
\begin{equation}\label{contraction-2}
\left\|N[u]-N[v]\right\|_{X}\le \frac{1}{2}\left\|u-v\right\|_{X}, \ \ u, v \in Y. 
\end{equation}
If we have shown \eqref{contraction-1} and \eqref{contraction-2}, 
by using the Banach fixed point theorem, we can see that \eqref{VFW} has a unique global mild solution in $Y$ satisfying the $L^{2}$-decay estimate \eqref{u-decay}. 

In the following, $E_{0}$ is assumed to be sufficiently small. First, from the Sobolev inequality 
\begin{equation}\label{Sobolev-ineq}
\left\|f\right\|_{L^{\infty}}\le \sqrt{2} \left\|f\right\|^{\frac{1}{2}}_{L^{2}}\left\|f'\right\|^{\frac{1}{2}}_{L^{2}}, \ \ f \in H^{1}(\R), 
\end{equation}
we have 
\begin{equation}\label{Linf-decay-pre}
\left\|u(\cdot, t)\right\|_{L^{\infty}}\le \left\|u\right\|_{X}(1+t)^{-\frac{1}{2}}. 
\end{equation}
In addition to \eqref{Linf-decay-pre}, we need to prepare the following estimates:
\begin{align}
\left\|\left(|u|^{p-1}u-|v|^{p-1}v\right)(\cdot, t)\right\|_{L^{1}}&\le C\left(\left\|u\right\|_{X}+\left\|v\right\|_{X}\right)\left\|u-v\right\|_{X}(1+t)^{-\frac{p-1}{2}}, \ \ u, v \in Y,\label{differ-L1}\\
\left\|\left(|u|^{p-1}u-|v|^{p-1}v\right)(\cdot, t)\right\|_{L^{2}}&\le C\left(\left\|u\right\|_{X}+\left\|v\right\|_{X}\right)\left\|u-v\right\|_{X}(1+t)^{-\frac{2p-1}{4}}, \ \ u, v \in Y, \label{differ-L2}\\
\left\|\p_{x}\left(|u|^{p-1}u-|v|^{p-1}v\right)(\cdot, t)\right\|_{L^{1}}&\le C\left(\left\|u\right\|_{X}+\left\|v\right\|_{X}\right)\left\|u-v\right\|_{X}(1+t)^{-\frac{p}{2}}, \ \ u, v \in Y, \label{differ-L1x} \\
\left\|\p_{x}\left(|u|^{p-1}u-|v|^{p-1}v\right)(\cdot, t)\right\|_{L^{2}}&\le C\left(\left\|u\right\|_{X}+\left\|v\right\|_{X}\right)\left\|u-v\right\|_{X}(1+t)^{-\frac{2p+1}{4}}, \ \ u, v \in Y. \label{differ-L2x}
\end{align}
We shall prove only $L^{1}$-decay estimates \eqref{differ-L1} and \eqref{differ-L1x}, since we can prove \eqref{differ-L2} and \eqref{differ-L2x} in the same way. 
First, we recall the following basic inequality: 
\begin{equation}\label{difference}
\left| |u|^{p-1}u-|v|^{p-1}v \right| \le C\left(|u|^{p-1}+|v|^{p-1}\right)|u-v|. 
\end{equation}
For \eqref{differ-L1}, by using the above inequality \eqref{difference}, the Schwarz inequality, \eqref{space-X} and \eqref{Linf-decay-pre}, we have 
\begin{align*}
&\left\|\left(|u|^{p-1}u-|v|^{p-1}v\right)(\cdot, t)\right\|_{L^{1}} \\
&\le C\left\|\left(|u|^{p-1}|u-v|\right)(\cdot, t)\right\|_{L^{1}}+C\left\|\left(|v|^{p-1}|u-v|\right)(\cdot, t)\right\|_{L^{1}} \\
&\le C\left\|u(\cdot, t)\right\|_{L^{\infty}}^{p-2}\left\|u(\cdot, t)\right\|_{L^{2}}\left\|(u-v)(\cdot, t)\right\|_{L^{2}}+C\left\|v(\cdot, t)\right\|_{L^{\infty}}^{p-2}\left\|v(\cdot, t)\right\|_{L^{2}}\left\|(u-v)(\cdot, t)\right\|_{L^{2}} \\
&\le C\left(\left\|u\right\|_{X}+\left\|v\right\|_{X}\right)\left\|u-v\right\|_{X}(1+t)^{-\frac{p-2}{2}}(1+t)^{-\frac{1}{4}}(1+t)^{-\frac{1}{4}} \\
&\le C\left(\left\|u\right\|_{X}+\left\|v\right\|_{X}\right)\left\|u-v\right\|_{X}(1+t)^{-\frac{p-1}{2}}. 
\end{align*}
For \eqref{differ-L1x}, noticing $\p_{x}(|u|^{p-1}u)=p|u|^{p-1}u_{x}$ and using the mean value theorem, we analogously have 
\begin{align*}
&\left\|\p_{x}\left(|u|^{p-1}u-|v|^{p-1}v\right)(\cdot, t)\right\|_{L^{1}} \\
&=p\left\|\left\{|u|^{p-1}(u-v)_{x}\right\}(\cdot, t)+\left\{\left(|u|^{p-1}-|v|^{p-1}\right)v_{x}\right\}(\cdot, t)\right\|_{L^{1}} \\
&\le C\left\|u(\cdot, t)\right\|_{L^{\infty}}^{p-2}\left\|u(\cdot, t)\right\|_{L^{2}}\left\|(u-v)_{x}(\cdot, t)\right\|_{L^{2}}+C\left\|\left(|u|^{p-1}-|v|^{p-1}\right)(\cdot, t)\right\|_{L^{2}}\left\|v_{x}(\cdot, t)\right\|_{L^{2}} \\
&\le C\left\|u(\cdot, t)\right\|_{L^{\infty}}^{p-2}\left\|u(\cdot, t)\right\|_{L^{2}}\left\|(u-v)_{x}(\cdot, t)\right\|_{L^{2}}\\
&\ \ \ \ +C(\left\|u(\cdot, t)\right\|_{L^{\infty}}^{p-2}+\left\|v(\cdot, t)\right\|_{L^{\infty}}^{p-2})\left\|\left(u-v\right)(\cdot, t)\right\|_{L^{2}}\left\|v_{x}(\cdot, t)\right\|_{L^{2}} \\
&\le C\left(\left\|u\right\|_{X}+\left\|v\right\|_{X}\right)\left\|u-v\right\|_{X}(1+t)^{-\frac{p-2}{2}}(1+t)^{-\frac{1}{4}}(1+t)^{-\frac{3}{4}} \\
&\le C\left(\left\|u\right\|_{X}+\left\|v\right\|_{X}\right)\left\|u-v\right\|_{X}(1+t)^{-\frac{p}{2}}. 
\end{align*}

Now, we would like to prove \eqref{contraction-1} and \eqref{contraction-2}. Recalling \eqref{nonlinear-map}, we obtain 
\begin{equation}\label{Nu-Nv}
\left(N[u]-N[v]\right)(t)=-\int_{0}^{t}\p_{x}T(t-\tau)*\left(|u|^{p-1}u-|v|^{p-1}v\right)(\tau)d\tau=:I(x, t). 
\end{equation}
Using the Plancherel theorem and splitting the $L^{2}$-norm of $I(x, t)$ as follows: 
\begin{equation}\label{Nu-Nv-split}
\left\|\p_{x}^{l}I(\cdot, t)\right\|_{L^{2}} \le \left\|(i\xi)^{l}\hat{I}(\xi, t)\right\|_{L^{2}(|\xi|\le1)}+\left\|(i\xi)^{l}\hat{I}(\xi, t)\right\|_{L^{2}(|\xi|\ge1)}=:I_{1}(t)+I_{2}(t), \ \ l=0, 1. 
\end{equation}
Since 
\begin{equation*}
\int_{|\xi|\le1}|\xi|^{j}\exp\left(-2(t-\tau)\xi^{2}\right)d\xi \le C(1+t-\tau)^{-\frac{j}{2}-\frac{1}{2}}, \ \ j\ge0, 
\end{equation*}
it follows from \eqref{differ-L1} and \eqref{differ-L1x} that 
\begin{align}
I_{1}(t)
&\le \int_{0}^{t}\left\|(i\xi)^{l+1}\exp\left(-\mu(t-\tau)\xi^{2}-\frac{i2Bb(t-\tau)\xi}{b^{2}+\xi^{2}}\right)\mathcal{F}\left[|u|^{p-1}u-|v|^{p-1}v\right](\xi, \tau)\right\|_{L^{2}(|\xi|\le1)}d\tau \nonumber \\
&\le \int_{0}^{t/2}\sup_{|\xi|\le1}\left|\mathcal{F}\left[|u|^{p-1}u-|v|^{p-1}v\right](\xi, \tau)\right|\left(\int_{|\xi|\le1}\xi^{2(l+1)}\exp\left(-2\mu(t-\tau)\xi^{2}\right)d\xi \right)^{\frac{1}{2}}d\tau \nonumber \\
&\ \ \ +\int_{t/2}^{t}\sup_{|\xi|\le1}\left|(i\xi)^{l}\mathcal{F}\left[|u|^{p-1}u-|v|^{p-1}v\right](\xi, \tau)\right|\left(\int_{|\xi|\le1}\xi^{2}\exp\left(-2\mu(t-\tau)\xi^{2}\right)d\xi \right)^{\frac{1}{2}}d\tau \nonumber \\
&\le C\int_{0}^{t/2}(1+t-\tau)^{-\frac{3}{4}-\frac{l}{2}}\left\|\left(|u|^{p-1}u-|v|^{p-1}v\right)(\cdot, \tau)\right\|_{L^{1}}d\tau \nonumber \\
&\ \ \ +C\int_{t/2}^{t}(1+t-\tau)^{-\frac{3}{4}}\left\|\p_{x}^{l}\left(|u|^{p-1}u-|v|^{p-1}v\right)(\cdot, \tau)\right\|_{L^{1}}d\tau \nonumber \\
&\le C\left(\left\|u\right\|_{X}+\left\|v\right\|_{X}\right)\left\|u-v\right\|_{X} \nonumber \\
&\ \ \ \times \left(\int_{0}^{t/2}(1+t-\tau)^{-\frac{3}{4}-\frac{l}{2}}(1+\tau)^{-\frac{p-1}{2}}d\tau+\int_{t/2}^{t}(1+t-\tau)^{-\frac{3}{4}}(1+\tau)^{-\frac{p-1}{2}-\frac{l}{2}}d\tau\right) \nonumber \\
&\le C\left(\left\|u\right\|_{X}+\left\|v\right\|_{X}\right)\left\|u-v\right\|_{X}
\begin{cases}
(1+t)^{-\frac{2p-3}{4}-\frac{l}{2}}, &2\le p<3, \\
(1+t)^{-\frac{3}{4}-\frac{l}{2}}\log(2+t), &p=3, \\
(1+t)^{-\frac{3}{4}-\frac{l}{2}}, &p>3, 
\end{cases} \nonumber \\
&\le C\left(\left\|u\right\|_{X}+\left\|v\right\|_{X}\right)\left\|u-v\right\|_{X}(1+t)^{-\frac{1}{4}-\frac{l}{2}}, \ \ p\ge2, \label{Nu-Nv-I1-est}
\end{align}
for any $t\ge 0$ and $l=0, 1$. 
Next, for $|\xi|\ge1$, by using the Schwarz inequality, we have 
\begin{align*}
\left|(i\xi)^{l}\hat{I}(\xi, t)\right|
&=\left|(i\xi)^{l+1}\int_{0}^{t}\exp\left(-\mu(t-\tau)\xi^{2}-\frac{i2Bb(t-\tau)\xi}{b^{2}+\xi^{2}}\right)\mathcal{F}\left[|u|^{p-1}u-|v|^{p-1}v\right](\xi, \tau)d\tau\right| \\
&\le \int_{0}^{t}|\xi|\exp\left(-\mu(t-\tau)\xi^{2}\right)\left|(i\xi)^{l}\mathcal{F}\left[|u|^{p-1}u-|v|^{p-1}v\right](\xi, \tau)\right|d\tau \\
&\le \left(\int_{0}^{t}\xi^2\exp\left(-\mu(t-\tau)\xi^{2}\right)d\tau \right)^{\frac{1}{2}} \\
&\ \ \ \times \left(\int_{0}^{t}\exp\left(-\mu(t-\tau)\xi^{2}\right)\left|(i\xi)^{l}\mathcal{F}\left[|u|^{p-1}u-|v|^{p-1}v\right](\xi, \tau)\right|^{2}d\tau \right)^{\frac{1}{2}} \\
&\le C\left(\int_{0}^{t}\exp\left(-\mu(t-\tau)\xi^{2}\right)\left|(i\xi)^{l}\mathcal{F}\left[|u|^{p-1}u-|v|^{p-1}v\right](\xi, \tau)\right|^{2}d\tau \right)^{\frac{1}{2}}.
\end{align*}
Therefore, it follows from \eqref{differ-L2} and \eqref{differ-L2x} that 
\begin{align}
I_{2}(t)
&\le C\left(\int_{|\xi|\ge1}\int_{0}^{t}\exp\left(-\mu(t-\tau)\xi^{2}\right)\left|(i\xi)^{l}\mathcal{F}\left[|u|^{p-1}u-|v|^{p-1}v\right](\xi, \tau)\right|^{2}d\tau d\xi\right)^{\frac{1}{2}} \nonumber \\
&\le C\left(\int_{0}^{t}\exp\left(-\mu(t-\tau)\right)\int_{|\xi|\ge1}\left|(i\xi)^{l}\mathcal{F}\left[|u|^{p-1}u-|v|^{p-1}v\right](\xi, \tau)\right|^{2}d\xi d\tau \right)^{\frac{1}{2}} \nonumber\\
&\le C\left(\int_{0}^{t}\exp\left(-\mu(t-\tau)\right)\left\|\p_{x}^{l}\left(|u|^{p-1}u-|v|^{p-1}v\right)(\cdot, \tau)\right\|_{L^{2}}^{2}d\tau \right)^{\frac{1}{2}} \nonumber\\
&\le C\left(\left\|u\right\|_{X}+\left\|v\right\|_{X}\right)\left\|u-v\right\|_{X}\left(\int_{0}^{t}\exp\left(-\mu(t-\tau)\right)(1+\tau)^{-\frac{2p-1}{2}-l}d\tau \right)^{\frac{1}{2}} \nonumber\\
&\le C\left(\left\|u\right\|_{X}+\left\|v\right\|_{X}\right)\left\|u-v\right\|_{X}(1+t)^{-\frac{2p-1}{4}-\frac{l}{2}} \nonumber \\
&\le C\left(\left\|u\right\|_{X}+\left\|v\right\|_{X}\right)\left\|u-v\right\|_{X}(1+t)^{-\frac{3}{4}-\frac{l}{2}}, \ \ p\ge2,  \label{Nu-Nv-I2-est}
\end{align}
for any $t\ge 0$ and $l=0, 1$. 
Combining \eqref{Nu-Nv} through \eqref{Nu-Nv-I2-est}, we obtain 
\begin{equation*}
\left\|\p_{x}^{l}\left(N[u]-N[v]\right)(t)\right\|_{L^{2}}\le C\left(\left\|u\right\|_{X}+\left\|v\right\|_{X}\right)\left\|u-v\right\|_{X}(1+t)^{-\frac{1}{4}-\frac{l}{2}}, \ \ t\ge0, \ l=0, 1.
\end{equation*}
Hence, there exists a positive constant $C_{1}>0$ such that 
\begin{equation*}
\left\|N[u]-N[v]\right\|_{X}\le C_{1}\left(\left\|u\right\|_{X}+\left\|v\right\|_{X}\right)\left\|u-v\right\|_{X}\le 4C_{0}C_{1}E_{0}\left\|u-v\right\|_{X}, \ \ u, v \in Y.
\end{equation*}
Here, we choose $E_{0}$ which satisfies $4C_{0}C_{1}E_{0} \le1/2$, then we have \eqref{contraction-2}. 
Moreover, we can see that \eqref{contraction-1} holds from \eqref{contraction-2}. Actually, taking $v=0$ in \eqref{contraction-2}, it follows that 
\begin{equation*}
\left\|N[u]-N[0]\right\|_{X}\le C_{0}E_{0}, \ \ u\in Y.
\end{equation*}
Therefore, since $N[0]=N_{0}$, we obtain from \eqref{contraction-2} that 
\begin{equation*}
\left\|N[u]\right\|_{X}\le \left\|N_{0}\right\|_{X}+\left\|N[u]-N[0]\right\|_{X} \le 2C_{0}E_{0}, \ \ u \in Y.
\end{equation*}
Thus, we get \eqref{contraction-1}. This completes the proof of the global existence and of the $L^{2}$-decay estimate \eqref{u-decay}. 

Finally, we shall prove \eqref{u-decay-Lq}. From the Sobolev inequality \eqref{Sobolev-ineq}, we immediately obtain 
\begin{equation}\label{Linf-decay}
\left\|u(\cdot, t)\right\|_{L^{\infty}}\le E_{0}(1+t)^{-\frac{1}{2}}, \ \ t\ge0.
\end{equation}
The estimate \eqref{u-decay-Lq} for $2<q<\infty$ can be obtained by \eqref{u-decay}, \eqref{Linf-decay} and an interpolation inequality 
\begin{equation}\label{interpolation}
\left\|f\right\|_{L^{q}}\le \left\|f\right\|^{1-\frac{2}{q}}_{L^{\infty}}\left\|f\right\|^{\frac{2}{q}}_{L^{2}}, \ \ 2<q<\infty, 
\end{equation}
as follows: 
\begin{align}
\left\|u(\cdot, t)\right\|_{L^{q}}
&\le \left\|u(\cdot, t)\right\|_{L^{\infty}}^{1-\frac{2}{q}}\left\|u(\cdot, t)\right\|_{L^{2}}^{\frac{2}{q}} \nonumber \\
&\le CE_{0}(1+t)^{-\frac{1}{2}(1-\frac{2}{q})}(1+t)^{-\frac{1}{2q}}
\le CE_{0}(1+t)^{-\frac{1}{2}(1-\frac{1}{q})}, \ \ t\ge0. \label{u-Lq}
\end{align}
This completes the proof. 
\end{proof}

\section{Auxiliary Lemmas and Propositions}

In this section, we prepare some auxiliary lemmas and propositions to prove the main result.  
First, we introduce the asymptotic formula for the integral kernel $T(x, t)$. Now, we remark that 
\begin{align}
&\int_{\R}Be^{-b|x-y|}u_{y}(y, t)dy
=\mathcal{F}^{-1}\left[\frac{i2Bb\xi}{b^{2}+\xi^{2}}\hat{u}(\xi)\right](x) \nonumber \\
&=2Bb(b^{2}-\p_{x}^{2})^{-1}\p_{x}u
=\frac{2B}{b}\p_{x}u+\frac{2B}{b}(b^{2}-\p_{x}^{2})^{-1}\p_{x}^{3}u.  \label{dispersion}
\end{align}
Therefore, the integral kernel $T(x, t)$ is defined by \eqref{T-linear} can be rewritten by 
\begin{equation*}
T(x, t)=\frac{1}{\sqrt{2\pi}}\mathcal{F}^{-1}\left[\exp\left(-\mu t\xi^{2}-\frac{i2Bt\xi}{b}+\frac{i2Bt\xi^{3}}{b(b^{2}+\xi^{2})}\right)\right](x).
\end{equation*}
By using the above expression, we can show the following estimates (for the proof, see Lemma~4.1 in \cite{FI21}): 
\begin{lem}\label{lem.T-G-linear}
Let $l$ be a non-negative integer and $2\le q\le \infty$. Then, we have 
\begin{align}
&\left\|\p_{x}^{l}T(\cdot, t)\right\|_{L^{q}}\le Ct^{-\frac{1}{2}\left(1-\frac{1}{q}\right)-\frac{l}{2}}, \ \ t>0, \label{T-basic} \\
&\left\|\p_{x}^{l}\left(T(\cdot, t)-G_{0}(\cdot, t)\right)\right\|_{L^{q}}\le Ct^{-\frac{1}{2}\left(1-\frac{1}{q}\right)-\frac{1}{2}-\frac{l}{2}}, \ \ t>0, \label{T-G-linear}
\end{align}
where $T(x, t)$ and $G_{0}(x, t)$ are defined by \eqref{T-linear} and \eqref{asymp-func}, respectively. 
\end{lem}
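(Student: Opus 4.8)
The plan is to work on the Fourier side, where $T$ and $G_{0}$ have explicit symbols, and then pass back to physical space via the Hausdorff--Young inequality, which covers the whole range $2\le q\le\infty$ at once. From the rewritten expression for $T$ one has
\[
\wh{T}(\xi,t)=\frac{1}{\sqrt{2\pi}}\exp\left(-\mu t\xi^{2}-\frac{i2Bt\xi}{b}+\frac{i2Bt\xi^{3}}{b(b^{2}+\xi^{2})}\right),
\]
and since $G_{0}(x,t)=G(x-\frac{2B}{b}t,t)$ with $\wh{G}(\xi,t)=\frac{1}{\sqrt{2\pi}}e^{-\mu t\xi^{2}}$, the translation rule gives $\wh{G_{0}}(\xi,t)=\frac{1}{\sqrt{2\pi}}\exp(-\mu t\xi^{2}-i2Bt\xi/b)$. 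In particular the two phase factors $e^{-i2Bt\xi/b}$ and $e^{i2Bt\xi^{3}/(b(b^{2}+\xi^{2}))}$ are unimodular, so $|(i\xi)^{l}\wh{T}(\xi,t)|=\frac{1}{\sqrt{2\pi}}|\xi|^{l}e^{-\mu t\xi^{2}}$.

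For \eqref{T-basic} I would apply $\|\p_{x}^{l}T(\cdot,t)\|_{L^{q}}\le C\|(i\xi)^{l}\wh{T}(\xi,t)\|_{L^{q'}}$ with $\tfrac1q+\tfrac1{q'}=1$, and then estimate the remaining norm of the Gaussian by the change of variables $\xi\mapsto\xi/\sqrt{t}$ together with the finiteness of $\int_{\R}|\eta|^{k}e^{-c\eta^{2}}d\eta$: this gives $\||\xi|^{l}e^{-\mu t\xi^{2}}\|_{L^{q'}}=Ct^{-\frac{l}{2}-\frac{1}{2q'}}=Ct^{-\frac{l}{2}-\frac{1}{2}(1-\frac1q)}$, which is exactly the asserted rate. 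Equivalently, one may treat the endpoints $q=2$ via Plancherel and $q=\infty$ via $\|f\|_{L^{\infty}}\le\frac{1}{\sqrt{2\pi}}\|\wh f\|_{L^{1}}$, and interpolate using $\|f\|_{L^{q}}\le\|f\|_{L^{2}}^{2/q}\|f\|_{L^{\infty}}^{1-2/q}$.

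For \eqref{T-G-linear} the key observation is that subtracting $G_{0}$ isolates the cubic dispersive correction of the symbol:
\[
\wh{T}(\xi,t)-\wh{G_{0}}(\xi,t)=\frac{1}{\sqrt{2\pi}}\,e^{-\mu t\xi^{2}-i2Bt\xi/b}\left(\exp\left(\frac{i2Bt\xi^{3}}{b(b^{2}+\xi^{2})}\right)-1\right).
\]
Using $|e^{i\theta}-1|\le|\theta|$ and $|\xi|^{3}/(b^{2}+\xi^{2})\le|\xi|^{3}/b^{2}$, this yields the pointwise bound $|(i\xi)^{l}(\wh{T}-\wh{G_{0}})(\xi,t)|\le Ct\,|\xi|^{l+3}e^{-\mu t\xi^{2}}$ --- precisely one extra power of $t$ and three extra powers of $|\xi|$ relative to $\wh{T}$. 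Feeding this into Hausdorff--Young and rescaling as before,
\[
\left\|\p_{x}^{l}(T-G_{0})(\cdot,t)\right\|_{L^{q}}\le Ct\left\||\xi|^{l+3}e^{-\mu t\xi^{2}}\right\|_{L^{q'}}=Ct\cdot t^{-\frac{l+3}{2}-\frac{1}{2q'}}=Ct^{-\frac{l}{2}-\frac{1}{2}-\frac{1}{2}\left(1-\frac1q\right)},
\]
which is the claim; the net gain of one factor $t^{-1/2}$ over \eqref{T-basic} is the balance between the single excess power $t$ and the $t^{-3/2}$ produced by the three extra powers of $|\xi|$ under the Gaussian rescaling.

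There is no serious obstruction in this argument; the only points deserving care are that the factor $t|\xi|^{3}$ is \emph{not} small pointwise, so it must be kept multiplied by $e^{-\mu t\xi^{2}}$ and only integrated/rescaled in $\xi$ afterwards, and that the Hausdorff--Young constant can be taken independent of $q$ on $2\le q\le\infty$. Everything else reduces to an elementary Gaussian moment computation. This is the scheme behind the proof cited from \cite{FI21} and, in the original dispersive--dissipative setting, \cite{K99}.
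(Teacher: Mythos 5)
Your argument is correct: the pointwise symbol bounds $|(i\xi)^{l}\wh{T}|\le C|\xi|^{l}e^{-\mu t\xi^{2}}$ and $|(i\xi)^{l}(\wh{T}-\wh{G_0})|\le Ct|\xi|^{l+3}e^{-\mu t\xi^{2}}$ (via $|e^{i\theta}-1|\le|\theta|$ and $b^{2}+\xi^{2}\ge b^{2}$) combined with Hausdorff--Young and Gaussian rescaling give exactly the stated rates. The paper does not reproduce a proof here but cites Lemma~4.1 of \cite{FI21}; your scheme is essentially the same one the paper uses for the analogous higher-order expansion in Proposition~\ref{prop.asymp-linear}, the only cosmetic difference being that you handle the $L^{\infty}$ endpoint through $\|f\|_{L^{\infty}}\le C\|\wh f\|_{L^{1}}$ rather than through the Sobolev inequality \eqref{Sobolev-ineq} followed by interpolation.
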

By virtue of the above lemma, we can prove the following approximation formula for the Duhamel term in the integral equation \eqref{integral-eq}. 
The following result plays an important role of the proof of Theorem~\ref{thm.main}. 
\begin{prop}\label{prop.asymp-Duhamel-pre}
Let $p>2$. Assume that $u_{0} \in H^{1}(\R) \cap L^{1}(\R)$ and $E_{0}=\|u_{0}\|_{H^{1}}+\|u_{0}\|_{L^{1}}$ is sufficiently small. 
Then, the solution $u(x, t)$ to \eqref{VFW} satisfies  
\begin{equation}\label{asymp-nonlinear-pre}
\lim_{t\to \infty}t^{\frac{1}{2}\left(1-\frac{1}{q}\right)+\frac{1}{2}}\left\|\int_{0}^{t}\p_{x}\left(T-G_{0}\right)(t-\tau)*\left(|u|^{p-1}u\right)(\tau)d\tau\right\|_{L^{q}}=0,
\end{equation}
for any $2\le q \le \infty$, where $T(x, t)$ and $G_{0}(x, t)$ are defined by \eqref{T-linear} and \eqref{asymp-func}, respectively. 
\end{prop}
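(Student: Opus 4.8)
The plan is to estimate the $L^q$-norm of the Duhamel remainder
\[
R(t):=\int_{0}^{t}\p_{x}(T-G_{0})(t-\tau)*(|u|^{p-1}u)(\tau)\,d\tau
\]
by splitting the time integral at $\tau=t/2$ and using the decay of $T-G_0$ from Lemma~\ref{lem.T-G-linear} together with the decay of the nonlinear term from Proposition~\ref{thm.GE-decay}. Near $\tau=0$ (i.e. on $[0,t/2]$) the kernel is evaluated at times $t-\tau\sim t$, so it is very smooth and small: by Young's inequality,
\[
\left\|\p_x(T-G_0)(t-\tau)*(|u|^{p-1}u)(\tau)\right\|_{L^q}\le \left\|\p_x(T-G_0)(t-\tau)\right\|_{L^q}\left\|(|u|^{p-1}u)(\tau)\right\|_{L^1},
\]
and one bounds $\||u|^{p-1}u(\tau)\|_{L^1}\le C\|u(\tau)\|_{L^\infty}^{p-2}\|u(\tau)\|_{L^2}^{2}\le CE_0^{p}(1+\tau)^{-(p-1)/2}$ using \eqref{u-decay}, \eqref{u-decay-Lq}. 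On $[t/2,t]$ the kernel is evaluated at small times, so one instead pairs $\|\p_x(T-G_0)(t-\tau)\|_{L^1}\le C(t-\tau)^{-1}$ (this is the $q=1$-type bound from Lemma~\ref{lem.T-G-linear}, applied after moving the $\p_x$; more precisely one uses that $\|\p_x(T-G_0)(\sigma)\|_{L^1}$ is integrable near $\sigma=0$ only after care — see below) with $\|(|u|^{p-1}u)(\tau)\|_{L^q}\le CE_0^p(1+\tau)^{-\frac12(1-1/q)-(p-1)/2}$.

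Carrying out the two pieces: the $[0,t/2]$ contribution is bounded by
\[
C E_0^p\int_0^{t/2}(t-\tau)^{-\frac12(1-\frac1q)-1}(1+\tau)^{-\frac{p-1}{2}}\,d\tau
\le C E_0^p\, t^{-\frac12(1-\frac1q)-1}\int_0^{t/2}(1+\tau)^{-\frac{p-1}{2}}\,d\tau,
\]
which for $p>2$ is $o\!\left(t^{-\frac12(1-1/q)-\frac12}\right)$ because the $\tau$-integral grows at most like $t^{(3-p)/2}$ (or $\log t$, or is bounded), all of which are $o(t^{1/2})$. The $[t/2,t]$ contribution requires knowing that $\|\p_x(T-G_0)(\sigma)\|_{L^1}\le C\sigma^{-1}$ for $0<\sigma\le1$ and $\le C\sigma^{-3/2}$ for $\sigma\ge1$; then
\[
C E_0^p\int_{t/2}^{t}(t-\tau)^{-1}\mathbf 1_{t-\tau\le1}\,d\tau\cdot\sup_{\tau\ge t/2}\|(|u|^{p-1}u)(\tau)\|_{L^q}
\]
is the delicate term since $(t-\tau)^{-1}$ is not integrable at $t-\tau=0$. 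The standard fix is to not extract the $\p_x$ onto the kernel there: instead write $\p_x(T-G_0)(t-\tau)*(|u|^{p-1}u)(\tau)=(T-G_0)(t-\tau)*\p_x(|u|^{p-1}u)(\tau)$ on $[t/2,t]$, use $\|(T-G_0)(\sigma)\|_{L^q}\le C\sigma^{-\frac12(1-1/q)-\frac12}$ (integrable at $\sigma=0$ for all $q$), and $\|\p_x(|u|^{p-1}u)(\tau)\|_{L^1}=p\||u|^{p-1}u_x(\tau)\|_{L^1}\le C\|u(\tau)\|_{L^\infty}^{p-2}\|u(\tau)\|_{L^2}\|u_x(\tau)\|_{L^2}\le CE_0^p(1+\tau)^{-p/2}$. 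This yields a bound $\le CE_0^p(1+t)^{-\frac12(1-1/q)-1/2}(1+t/2)^{-p/2}\cdot(\text{integrable factor})$, which is $o\!\left(t^{-\frac12(1-1/q)-1/2}\right)$ since $(1+\tau)^{-p/2}\to0$.

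The main obstacle is bookkeeping the borderline integrability of the singular kernel $\p_x(T-G_0)$ at short times and choosing, piece by piece, whether to put the $x$-derivative on the kernel (good at large $t-\tau$, i.e. on $[0,t/2]$) or on the nonlinearity (good at small $t-\tau$, i.e. on $[t/2,t]$); this is exactly the split used for the generalized KdV--Burgers equation in \cite{K99}. A secondary point is that to conclude the limit is $0$ (not merely boundedness) one should, for the $[0,t/2]$ part, additionally split the inner $\tau$-integral at a large fixed $A$ and use dominated convergence / the decay $(1+\tau)^{-(p-1)/2}$ with $p>2$ so that the tail $\int_A^{t/2}$ is small uniformly and the head $\int_0^A$ is multiplied by $t^{-\frac12(1-1/q)-1}=o(t^{-\frac12(1-1/q)-1/2})$; for the $[t/2,t]$ part the factor $(1+\tau)^{-p/2}\to0$ already gives the $o(1)$. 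Once both contributions are shown to be $o(t^{-\frac12(1-1/q)-1/2})$, multiplying by $t^{\frac12(1-1/q)+1/2}$ and letting $t\to\infty$ gives \eqref{asymp-nonlinear-pre}.
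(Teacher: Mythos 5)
Your overall strategy is the same as the paper's: split the Duhamel integral at $\tau=t/2$, put the $x$-derivative on the kernel $T-G_{0}$ for $\tau\in[0,t/2]$ and on the nonlinearity for $\tau\in[t/2,t]$, and use Lemma~\ref{lem.T-G-linear} together with the decay of $|u|^{p-1}u$. The first piece is handled correctly (and your extra dominated-convergence step there is unnecessary, since the explicit rates $t^{-(p-2)/2}$, $t^{-1/2}\log t$, $t^{-1/2}$ already give $o(1)$ after multiplying by $t^{\frac12(1-\frac1q)+\frac12}$).

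There is, however, a genuine gap in your treatment of the piece on $[t/2,t]$. You pair $\left\|(T-G_{0})(\sigma)\right\|_{L^{q}}\le C\sigma^{-\frac12\left(1-\frac1q\right)-\frac12}$ with $\left\|\p_{x}(|u|^{p-1}u)(\tau)\right\|_{L^{1}}$ via Young's inequality $L^{q}*L^{1}\to L^{q}$, and assert that this kernel bound is ``integrable at $\sigma=0$ for all $q$.'' That is false at the endpoint: for $q=\infty$ the exponent is $-1$, so $\int_{t/2}^{t}(t-\tau)^{-1}\,d\tau$ diverges, and the proposition explicitly claims the result for $q=\infty$ (which is in fact the case needed for the sup-norm asymptotics in Theorem~\ref{thm.main}). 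A secondary inaccuracy: even for finite $q$ your ``integrable factor'' $\int_{t/2}^{t}(t-\tau)^{-\frac12(1-\frac1q)-\frac12}\,d\tau$ is not bounded but grows like $t^{\frac{1}{2q}}$; the conclusion survives for $2\le q<\infty$ because $t^{1-\frac{p}{2}}\to0$, but the bookkeeping as written is off. The repair is the pairing the paper uses: apply Young's inequality in the form $L^{2}*L^{r}\to L^{q}$ with $\frac1q+1=\frac12+\frac1r$, so that the kernel factor is always $\left\|(T-G_{0})(t-\tau)\right\|_{L^{2}}\le C(t-\tau)^{-3/4}$ (integrable near $\tau=t$ uniformly in $q$), at the cost of needing the interpolated estimate $\left\|\p_{x}(|u|^{p-1}u)(\cdot,\tau)\right\|_{L^{r}}\le CE_{0}(1+\tau)^{-\frac12\left(1-\frac1r\right)-\frac{p}{2}}$ for $1\le r\le2$, i.e.\ \eqref{d_u^p-decay-Lr}, which you only derive for $r=1$. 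With that modification your argument closes for all $2\le q\le\infty$.
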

\begin{proof}
First, we split the integral as follows: 
\begin{align}
&\int_{0}^{t}\p_{x}\left(T-G_{0}\right)(t-\tau)*\left(|u|^{p-1}u\right)(\tau)d\tau \nonumber \\
&=\left(\int_{0}^{t/2}+\int_{t/2}^{t}\right)\p_{x}\left(T-G_{0}\right)(t-\tau)*\left(|u|^{p-1}u\right)(\tau)d\tau=:D_{1}(x, t)+D_{2}(x, t). \label{Duhamel-split}
\end{align}

In order to evaluate $D_{1}(x, t)$ and $D_{2}(x, t)$, we need to prepare some decay estimates for $(|u|^{p-1}u)(x, t)$. 
It follows from the decay estimate \eqref{u-decay-Lq} that 
\begin{align}
\left\|\left(|u|^{p-1}u\right)(\cdot, t)\right\|_{L^{q}}
&\le \left\|u(\cdot, t)\right\|_{L^{\infty}}^{p-2}\left\|u(\cdot, t)\right\|_{L^{2q}}^{2} 
\le CE_{0}(1+t)^{-\frac{p-2}{2}}(1+t)^{-\left(1-\frac{1}{2q}\right)} \nonumber \\
&\le CE_{0}(1+t)^{-\frac{1}{2}\left(1-\frac{1}{q}\right)-\frac{p-1}{2}}, \ \ t\ge0, \ 1\le q\le \infty. \label{u^p-decay-Lq}
\end{align}
Moreover, we can see that the following estimate holds: 
\begin{equation}\label{d_u^p-decay-Lr}
\left\|\p_{x}\left(|u|^{p-1}u\right)(\cdot, t)\right\|_{L^{r}}
\le CE_{0}(1+t)^{-\frac{1}{2}\left(1-\frac{1}{r}\right)-\frac{p}{2}}, \ \ t\ge0, \ 1\le r\le 2.
\end{equation}
Actually, we get from \eqref{u-decay-Lq} and \eqref{u-decay} that 
\begin{align}
\left\|\p_{x}\left(|u|^{p-1}u\right)(\cdot, t)\right\|_{L^{2}}
&=p\left\|\left(|u|^{p-1}u_{x}\right)(\cdot, t)\right\|_{L^{2}} 
\le p\left\|u(\cdot, t)\right\|_{L^{\infty}}^{p-1}\left\|u_{x}(\cdot, t)\right\|_{L^{2}}  \nonumber \\
&\le CE_{0}(1+t)^{-\frac{p-1}{2}}(1+t)^{-\frac{3}{4}}
\le CE_{0}(1+t)^{-\frac{2p+1}{4}}, \ \ t\ge0. \label{d_u^p-decay-L2}
\end{align}
Moreover, using the Schwarz inequality, similarly we have 
\begin{align}
\left\|\p_{x}\left(|u|^{p-1}u\right)(\cdot, t)\right\|_{L^{1}}
&=p\left\|\left(|u|^{p-1}u_{x}\right)(\cdot, t)\right\|_{L^{1}} 
\le p\left\|u(\cdot, t)\right\|_{L^{\infty}}^{p-2}\left\|u(\cdot, t)\right\|_{L^{2}}\left\|u_{x}(\cdot, t)\right\|_{L^{2}} \nonumber \\
&\le CE_{0}(1+t)^{-\frac{p-2}{2}}(1+t)^{-\frac{1}{4}}(1+t)^{-\frac{3}{4}}
\le CE_{0}(1+t)^{-\frac{p}{2}}, \ \ t\ge0. \label{d_u^p-decay-L1}
\end{align}
Therefore, we can prove \eqref{d_u^p-decay-Lr} from \eqref{d_u^p-decay-L2} and \eqref{d_u^p-decay-L1} through an interpolation inequality 
\[
\left\|f\right\|_{L^{r}}\le \left\|f\right\|^{-1+\frac{2}{r}}_{L^{1}}\left\|f\right\|^{2-\frac{2}{r}}_{L^{2}}, \ \ 1<r<2, 
\]
as follows: 
\begin{align*}
&\left\|\p_{x}\left(|u|^{p-1}u\right)(\cdot, t)\right\|_{L^{r}}
\le \left\|\p_{x}\left(|u|^{p-1}u\right)(\cdot, t)\right\|_{L^{1}}^{-1+\frac{2}{r}}\left\|\p_{x}\left(|u|^{p-1}u\right)(\cdot, t)\right\|_{L^{2}}^{2-\frac{2}{r}} \\
&\le CE_{0}(1+t)^{-\frac{p}{2}\left(\frac{2}{r}-1\right)}(1+t)^{-\frac{2p+1}{4}\left(2-\frac{2}{r}\right)}
\le CE_{0}(1+t)^{-\frac{1}{2}\left(1-\frac{1}{r}\right)-\frac{p}{2}}, \ \ t\ge0, \ 1\le r\le 2.
\end{align*}

Now, we shall evaluate $D_{1}(x, t)$ and $D_{2}(x, t)$ in \eqref{Duhamel-split}. First for $D_{1}(x, t)$, it follows from Young's inequality, \eqref{T-G-linear} and \eqref{u^p-decay-Lq} that 
\begin{align}
\left\|D_{1}(\cdot, t)\right\|_{L^{q}} 
&\le \int_{0}^{t/2}\left\|\p_{x}(T-G_{0})(\cdot, t-\tau)\right\|_{L^{q}}\left\|\left(|u|^{p-1}u\right)(\cdot, \tau)\right\|_{L^{1}}d\tau \nonumber \\
&\le CE_{0}\int_{0}^{t/2}(t-\tau)^{-\frac{1}{2}\left(1-\frac{1}{q}\right)-1}(1+\tau)^{-\frac{p-1}{2}}d\tau \nonumber \\
&\le CE_{0}\begin{cases}
t^{-\frac{1}{2}\left(1-\frac{1}{q}\right)-\frac{p-1}{2}}, &2<p<3, \\
t^{-\frac{1}{2}\left(1-\frac{1}{q}\right)-1}\log(2+t), &p=3, \\
t^{-\frac{1}{2}\left(1-\frac{1}{q}\right)-1}, &p>3,
\end{cases}\label{D1-est}
\end{align}
for any $t>0$ and $2\le q\le \infty$. On the other hand, from Young's inequality, \eqref{T-G-linear} and \eqref{d_u^p-decay-Lr}, we get 
\begin{align}
\left\|D_{2}(\cdot, t)\right\|_{L^{q}} 
&\le \int_{t/2}^{t}\left\|(T-G_{0})(\cdot, t-\tau)\right\|_{L^{2}}\left\|\p_{x}\left(|u|^{p-1}u\right)(\cdot, \tau)\right\|_{L^{r}}d\tau \ \ \left(\frac{1}{q}+1=\frac{1}{2}+\frac{1}{r}\right) \nonumber \\
&\le CE_{0}\int_{t/2}^{t}(t-\tau)^{-\frac{3}{4}}(1+\tau)^{-\frac{1}{2}\left(\frac{1}{2}-\frac{1}{q}\right)-\frac{p}{2}}d\tau \nonumber \\
&\le CE_{0}t^{-\frac{1}{2}\left(1-\frac{1}{q}\right)-\frac{p-1}{2}}, \ \ t>0, \ 2\le q\le \infty. \label{D2-est}
\end{align}
Finally, combining \eqref{Duhamel-split}, \eqref{D1-est} and \eqref{D2-est}, we arrive at 
\begin{align*}
&\limsup_{t\to \infty}t^{\frac{1}{2}\left(1-\frac{1}{q}\right)+\frac{1}{2}}\left\|\int_{0}^{t}\p_{x}\left(T-G_{0}\right)(t-\tau)*\left(|u|^{p-1}u\right)(\tau)d\tau\right\|_{L^{q}} \\
&\le \limsup_{t\to \infty}t^{\frac{1}{2}\left(1-\frac{1}{q}\right)+\frac{1}{2}}\left\|D_{1}(\cdot, t)\right\|_{L^{q}} + \limsup_{t\to \infty}t^{\frac{1}{2}\left(1-\frac{1}{q}\right)+\frac{1}{2}}\left\|D_{2}(\cdot, t)\right\|_{L^{q}}  \\
&\le CE_{0}\lim_{t \to \infty}t^{-\frac{p-2}{2}}+CE_{0}\lim_{t\to \infty}\begin{cases}
t^{-\frac{p-2}{2}}, &2<p<3, \\
t^{-\frac{1}{2}}\log(2+t), &p=3, \\
t^{-\frac{1}{2}}, &p>3
\end{cases}\\
&=0, \ \ 2\le q\le \infty. 
\end{align*}
It means that the asymptotic formula \eqref{asymp-nonlinear-pre} holds. This completes the proof. 
\end{proof}

Next, let us introduce several properties of the modified heat kernel $G_{0}(x, t)$ defined by \eqref{asymp-func}. 
This function satisfies the following decay estimate \eqref{heat-decay}. The proof is the same as the one for the usual heat kernel $G(x, t)$ defined by \eqref{kernel} (for details, see e.g.~\cite{GG99}).  
\begin{lem}\label{lem.heat-decay}
Let $k$ and $l$ be non-negative integers. Then, for $1\le q\le \infty$, we have
\begin{equation}\label{heat-decay}
\left\| \p_{t}^{k}\p_{x}^{l}G_{0}(\cdot, t) \right\|_{L^{q}}\le Ct^{-\frac{1}{2}\left(1-\frac{1}{q}\right)-\frac{l+k}{2}}, \ \ t>0.
\end{equation}
\end{lem}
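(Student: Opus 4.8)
The plan is to deduce \eqref{heat-decay} from the classical decay estimates for the unshifted Gaussian kernel $G(x,t)$ defined in \eqref{kernel}, using that $G_{0}(x,t)=G\!\left(x-\tfrac{2B}{b}t,\,t\right)$ differs from $G$ only by a translation in the space variable, which preserves every $L^{q}(\R)$-norm in $x$. First I would record the scaling identity for the heat kernel: writing $G(x,t)=t^{-1/2}g(x/\sqrt{t})$ with $g(y):=(4\pi\mu)^{-1/2}e^{-y^{2}/(4\mu)}\in\mathcal{S}(\R)$, one has $\p_{x}^{m}G(x,t)=t^{-(m+1)/2}g^{(m)}(x/\sqrt{t})$, so by the change of variables $y=x/\sqrt{t}$,
\[
\left\|\p_{x}^{m}G(\cdot,t)\right\|_{L^{q}}=t^{-\frac{m+1}{2}+\frac{1}{2q}}\left\|g^{(m)}\right\|_{L^{q}}=C_{m,q}\,t^{-\frac{1}{2}\left(1-\frac{1}{q}\right)-\frac{m}{2}},\qquad t>0,
\]
for every non-negative integer $m$ and $1\le q\le\infty$; this is the well-known bound recalled in \cite{GG99}.

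Next I would convert time derivatives into space derivatives. Since $G$ solves the heat equation $\p_{t}G=\mu\p_{x}^{2}G$ and $\p_{t}\bigl[G(x-ct,t)\bigr]=\bigl((\p_{t}-c\p_{x})G\bigr)(x-ct,t)$ with $c:=\tfrac{2B}{b}$, an induction on $k$ gives
\[
\p_{t}^{k}\p_{x}^{l}G_{0}(x,t)=\sum_{j=0}^{k}\binom{k}{j}(-c)^{k-j}\mu^{j}\,\bigl(\p_{x}^{\,l+k+j}G\bigr)(x-ct,t),
\]
so that $\p_{t}^{k}\p_{x}^{l}G_{0}$ is a finite linear combination, with constants depending only on $B,b,\mu,k,l$, of spatial derivatives $\p_{x}^{m}G_{0}$ with $l+k\le m\le l+2k$. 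Combining this with the translation invariance of the $L^{q}$-norm and the estimate above,
\[
\left\|\p_{t}^{k}\p_{x}^{l}G_{0}(\cdot,t)\right\|_{L^{q}}\le C\sum_{m=l+k}^{l+2k}\left\|\p_{x}^{m}G(\cdot,t)\right\|_{L^{q}}\le C\sum_{m=l+k}^{l+2k}t^{-\frac{1}{2}\left(1-\frac{1}{q}\right)-\frac{m}{2}}\le C\,t^{-\frac{1}{2}\left(1-\frac{1}{q}\right)-\frac{l+k}{2}},
\]
which is \eqref{heat-decay}.

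The computations are entirely routine, and there is no substantial obstacle. The only step requiring a little care is the passage from $t$-derivatives to $x$-derivatives: the translation by $\tfrac{2B}{b}t$ must be tracked through the operator $\p_{t}-c\p_{x}$ (equivalently, through the convection--diffusion equation $\p_{t}G_{0}+c\p_{x}G_{0}=\mu\p_{x}^{2}G_{0}$) rather than handled naively, which is precisely why a single time derivative is worth at least half a power of $t$ in the decay exponent, matching the claimed rate $\tfrac{l+k}{2}$.
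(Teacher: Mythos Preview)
Your argument is correct and is exactly the standard reduction the paper has in mind when it writes ``the proof is the same as the one for the usual heat kernel $G(x,t)$'' and cites \cite{GG99}: reduce $\p_t$ to $\p_x$ via the convection--diffusion identity $\p_tG_0=\mu\p_x^2G_0-\tfrac{2B}{b}\p_xG_0$, then invoke the scaling estimate for $\p_x^mG$. One small caveat: your last inequality $\sum_{m=l+k}^{l+2k}t^{-m/2}\le C\,t^{-(l+k)/2}$ holds only for $t$ bounded away from $0$; in fact the stated bound \eqref{heat-decay} itself cannot hold uniformly near $t=0$ when $k\ge1$ (the $\mu^k\p_x^{l+2k}$ contribution gives $\sim t^{-\frac12(1-\frac1q)-\frac{l}{2}-k}$ there), but this is immaterial since the lemma is applied in the paper only with $k=0$ or for $t\ge1$.
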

Moreover, we can prove the following asymptotic formula for $G_{0}(x, t)$: 
\begin{prop}\label{prop.asymp-heat}
Let $l$ be a non-negative integer and $1\le q\le \infty$. 
Suppose $u_{0}\in L^{1}(\R)$ and $xu_{0}\in L^{1}(\R)$. Then, we have
\begin{align}
&\left\|\p_{x}^{l}\left(G_{0}(t)*u_{0}-MG_{0}(\cdot, t)\right)\right\|_{L^{q}} \le C\|xu_{0}\|_{L^{1}}t^{-\frac{1}{2}\left(1-\frac{1}{q}\right)-\frac{1}{2}-\frac{l}{2}}, \ \ t>0, \label{heat-first} \\
&\lim_{t\to \infty}t^{\frac{1}{2}\left(1-\frac{1}{q}\right)+\frac{1}{2}+\frac{l}{2}}\left\|\p_{x}^{l}\left(G_{0}(t)*u_{0}-MG_{0}(\cdot, t)+m\p_{x}G_{0}(\cdot, t)\right)\right\|_{L^{q}}=0, \label{heat-second}
\end{align}
where $G_{0}(x, t)$, $M$ and $m$ are defined by \eqref{asymp-func} and \eqref{M-m-notation}, respectively. 
\end{prop}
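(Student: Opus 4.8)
The plan is to establish \eqref{heat-first} and \eqref{heat-second} by working on the Fourier side, since $G_{0}$ is just a translate of the Gaussian $G$ and its Fourier transform is explicit. Write $G_{0}(x,t)=G\!\left(x-\tfrac{2B}{b}t,\,t\right)$, so that $\widehat{G_{0}}(\xi,t)=e^{-i\frac{2B}{b}t\xi}\,e^{-\mu t\xi^{2}}$. Then
\[
\mathcal{F}\!\left[G_{0}(t)*u_{0}-MG_{0}(\cdot,t)\right](\xi)
= e^{-i\frac{2B}{b}t\xi}e^{-\mu t\xi^{2}}\bigl(\widehat{u_{0}}(\xi)-\widehat{u_{0}}(0)\bigr),
\]
because $M=\sqrt{2\pi}\,\widehat{u_{0}}(0)$ (up to the normalization of $\mathcal F$). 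The hypothesis $xu_{0}\in L^{1}$ gives $\widehat{u_{0}}\in C^{1}$ with $\widehat{u_{0}}'(0)= -\frac{i}{\sqrt{2\pi}}m$ and the quantitative bound $|\widehat{u_{0}}(\xi)-\widehat{u_{0}}(0)|\le C\|xu_{0}\|_{L^{1}}|\xi|$.

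First I would prove \eqref{heat-first}. For $q=2$, apply Plancherel: the $L^2$ norm is controlled by $\int_{\R}|\xi|^{2l}e^{-2\mu t\xi^{2}}|\widehat{u_{0}}(\xi)-\widehat{u_{0}}(0)|^{2}\,d\xi \le C\|xu_{0}\|_{L^1}^2\int_{\R}|\xi|^{2l+2}e^{-2\mu t\xi^{2}}\,d\xi$, and the Gaussian integral is $Ct^{-l-\frac32}$, giving the exponent $-\frac14-\frac12-\frac l2$ as claimed. For $q=\infty$, bound the inverse Fourier transform in $L^\infty$ by the $L^1$ norm on the Fourier side: $\int_{\R}|\xi|^{l}e^{-\mu t\xi^{2}}|\widehat{u_{0}}(\xi)-\widehat{u_{0}}(0)|\,d\xi\le C\|xu_0\|_{L^1}\int_{\R}|\xi|^{l+1}e^{-\mu t\xi^{2}}\,d\xi = Ct^{-\frac12-\frac{l+1}{2}}$, which is the stated exponent for $q=\infty$. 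The intermediate $1\le q\le\infty$ then follows by interpolation (Riesz--Thorin), or alternatively one writes $G_{0}(t)*u_{0}-MG_{0}(\cdot,t)=\int_{\R}(G_{0}(x-y,t)-G_{0}(x,t))u_{0}(y)\,dy$, uses $|G_{0}(x-y,t)-G_{0}(x,t)|\le |y|\sup_{\theta}|\p_x G_{0}(x-\theta y,t)|$, and applies Young's inequality with $\|\p_x G_0(\cdot,t)\|_{L^q}\le Ct^{-\frac12(1-\frac1q)-\frac12}$ from Lemma~\ref{lem.heat-decay}; the latter real-variable argument is cleanest and handles all $q$ at once.

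For \eqref{heat-second} I would subtract the next Taylor term. On the Fourier side,
\[
\mathcal F\!\left[G_{0}(t)*u_{0}-MG_{0}(\cdot,t)+m\p_x G_{0}(\cdot,t)\right](\xi)
= e^{-i\frac{2B}{b}t\xi}e^{-\mu t\xi^{2}}\Bigl(\widehat{u_{0}}(\xi)-\widehat{u_{0}}(0)-\xi\,\widehat{u_{0}}'(0)\Bigr),
\]
using $\mathcal F[\p_x G_0](\xi)=i\xi\widehat{G_0}(\xi)$ and $\widehat{u_0}'(0)=-\frac{i}{\sqrt{2\pi}}m$ to match the coefficient. Now $r(\xi):=\widehat{u_{0}}(\xi)-\widehat{u_{0}}(0)-\xi\widehat{u_{0}}'(0)$ satisfies $r(\xi)=o(|\xi|)$ as $\xi\to0$ and $|r(\xi)|\le C\|xu_0\|_{L^1}|\xi|$ globally. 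Split the relevant integral (for $q=2$, $\int |\xi|^{2l}e^{-2\mu t\xi^2}|r(\xi)|^2d\xi$; for $q=\infty$, $\int|\xi|^l e^{-\mu t\xi^2}|r(\xi)|d\xi$) into $|\xi|\le\delta$ and $|\xi|>\delta$. On $|\xi|>\delta$ the Gaussian gives exponential decay in $t$, which is $o(t^{-\frac12(1-\frac1q)-\frac12-\frac l2})$. On $|\xi|\le\delta$ use $|r(\xi)|\le \e_\delta|\xi|$ with $\e_\delta\to0$ as $\delta\to0$, reducing to the same Gaussian moment integrals as in \eqref{heat-first} but with an extra small factor $\e_\delta$; thus the $\limsup$ of $t^{\frac12(1-\frac1q)+\frac12+\frac l2}\|\cdot\|_{L^q}$ is $\le C\e_\delta$ for every $\delta>0$, hence $0$. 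Intermediate $q$ follows by interpolation.

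The only mild obstacle is bookkeeping the exact powers of $t$ across the three regimes $q=2$, $q=\infty$, $1\le q<2$, and confirming that the normalization constant in the definition of $\mathcal F$ makes the coefficients $M$ and $m$ come out exactly as in \eqref{M-m-notation} (a factor $\sqrt{2\pi}$ that must be tracked consistently); there is no genuine analytic difficulty, as everything reduces to the smoothness of $\widehat{u_0}$ near $\xi=0$ guaranteed by $xu_0\in L^1$ together with Gaussian moment estimates.
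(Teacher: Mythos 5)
Your Fourier-side route is genuinely different from the paper's argument. The paper proves \eqref{heat-second} entirely in physical space: it writes $G_{0}(t)*u_{0}-MG_{0}+m\p_{x}G_{0}=\int_{\R}\bigl(G_{0}(x-y,t)-G_{0}(x,t)+y\p_{x}G_{0}(x,t)\bigr)u_{0}(y)\,dy$, splits the $y$-integral at a level $L=L(\e_{0})$ chosen so that $\int_{|y|\ge L}|yu_{0}(y)|\,dy<\e_{0}$, uses the second-order Taylor remainder on $|y|\le L$ (gaining a full extra power $t^{-1}$ at the cost of $L^{2}\|u_{0}\|_{L^{1}}$) and the first-order remainder with the small tail on $|y|\ge L$, then applies Young's inequality against $\|\p_{x}^{l+1}G_{0}(\cdot,t)\|_{L^{q}}$ and $\|\p_{x}^{l+2}G_{0}(\cdot,t)\|_{L^{q}}$. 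Your $\delta$-splitting of the frequency integral, with $r(\xi)=o(|\xi|)$ near $0$ and $|r(\xi)|\le C\|xu_{0}\|_{L^{1}}|\xi|$ globally, is the exact dual of this; both arguments ultimately rest on the same information ($\widehat{u_{0}}\in C^{1}$ with the tails of $xu_{0}$ controlling the modulus of continuity of $\widehat{u_{0}}'$ at the origin). Your normalization bookkeeping checks out: with the paper's convention, $\widehat{u_{0}}(0)=M/\sqrt{2\pi}$, $\widehat{u_{0}}'(0)=-im/\sqrt{2\pi}$, and $\mathcal{F}[m\p_{x}G_{0}]=-\xi\,\widehat{u_{0}}'(0)\widehat{G_{0}}\cdot\sqrt{2\pi}/\sqrt{2\pi}$ matches, so the coefficients of $M$ and $m$ come out right, and the Gaussian moment computations give the stated exponents for $q=2$ and $q=\infty$.

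The one genuine gap is the range of $q$. The proposition is stated for $1\le q\le\infty$, but Plancherel plus the $L^{1}_{\xi}\to L^{\infty}_{x}$ bound plus interpolation only reaches $2\le q\le\infty$; there is no Hausdorff--Young-type inequality controlling $\|f\|_{L^{q}}$ for $q<2$ by norms of $\hat{f}$, so your argument for \eqref{heat-second} does not cover $1\le q<2$ as written. For \eqref{heat-first} you already noted the fix yourself (the real-variable mean-value/Young argument, with $\p_{x}^{l+1}G_{0}$ in place of $\p_{x}G_{0}$, handles all $q$ at once); for \eqref{heat-second} the corresponding fix is precisely the paper's physical-space Taylor expansion with the cutoff in $y$ — the cutoff being necessary because $y^{2}u_{0}(y)$ is not assumed integrable, so one cannot use the second-order remainder globally. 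Since the main theorem only invokes the proposition with $2\le q\le\infty$, the omission is harmless for the application, but as a proof of the stated proposition it is incomplete without the physical-space step.
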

\begin{proof}
We shall only prove \eqref{heat-second} because \eqref{heat-first} can be derived by a standard way (see e.g.~\cite{GG99}). 
First, it follows from the definition of $M$ given by \eqref{M-m-notation} that 
\begin{equation}\label{Gu0-MG}
G_{0}(t)*u_{0}-MG_{0}(x, t)
=\int_{\R}\left(G_{0}(x-y, t)-G_{0}(x, t)\right)u_{0}(y)dy.
\end{equation}
Now, recalling Taylor's theorem, we have
\[
f(1)
=f(0)+\int_{0}^{1}f'(\theta)d\theta
=f(0)+f'(0)+\int_{0}^{1}f''(\theta)(1-\theta)d\theta. 
\]
Then, applying the above formula for $f(\theta):=G_{0}(x-\theta y, t)$ for $\theta \in \R$, we obtain 
\begin{align}
&G_{0}(x-y, t)=G_{0}(x, t)-y\left(\int_{0}^{1}\p_{x}G_{0}(x-\theta y, t)d\theta \right), \label{G-Taylor-1}\\
&
G_{0}(x-y, t)=G_{0}(x, t)-y\p_{x}G_{0}(x, t)+y^{2}\left(\int_{0}^{1} \p_{x}^{2}G_{0}(x-\theta y, t)(1-\theta) d\theta \right). \label{G-Taylor-2}
\end{align}
Since $xu_{0}\in L^{1}(\R)$, for any $\e_{0}>0$, there exists a constant $L=L(\e_{0})>0$ such that 
$\int_{|y| \ge L} \left|yu_{0}(y)\right| dy < \e_{0}$. Now, we split the $y$-integral in \eqref{Gu0-MG} as follows: 
\begin{align}
&G_{0}(t)*u_{0}-MG_{0}(x, t)+m\p_{x}G_{0}(x, t) \nonumber \\
&=\int_{\R}\left(G_{0}(x-y, t)-G_{0}(x, t)+y\p_{x}G_{0}(x, t)\right)u_{0}(y)dy \nonumber \\
&=\int_{|y|\le L} \left(\int_{0}^{1} \p_{x}^{2}G_{0}(x-\theta y, t)(1-\theta) d\theta \right)y^{2}u_{0}(y)dy \nonumber \\
&\ \ \ \ +\int_{|y|\ge L} \left\{\p_{x}G_{0}(x, t)-\left(\int_{0}^{1}\p_{x}G_{0}(x-\theta y, t)d\theta \right)\right\}yu_{0}(y)dy, \label{Gu0-MG+mpG}
\end{align}
where we have used the facts \eqref{G-Taylor-1} and \eqref{G-Taylor-2}. Then, from \eqref{Gu0-MG+mpG} and Lemma~\ref{lem.heat-decay}, we can see that 
\begin{align*}
&\left\|\p_{x}^{l}\left(G_{0}(t)*u_{0}-MG_{0}(\cdot, t)+m\p_{x}G_{0}(\cdot, t)\right)\right\|_{L^{q}} \\
&\le \int_{|y|\le L} \left(\int_{0}^{1} \left\|\p_{x}^{l+2}G_{0}(\cdot-\theta y, t)(1-\theta) \right\|_{L^{q}}d\theta \right)|y|^{2}|u_{0}(y)|dy \\
&\ \ \ \ +\int_{|y|\ge L} \left\{\left\|\p_{x}^{l+1}G_{0}(\cdot, t) \right\|_{L^{q}}+\left(\int_{0}^{1}\left\|\p_{x}^{l+1}G_{0}(\cdot-\theta y, t)\right\|_{L^{q}}d\theta \right)\right\}|yu_{0}(y)|dy \\
&\le CL^{2}\|u_{0}\|_{L^{1}}t^{-\frac{1}{2}\left(1-\frac{1}{q}\right)-\frac{l+2}{2}}
+\e_{0}Ct^{-\frac{1}{2}\left(1-\frac{1}{q}\right)-\frac{l+1}{2}}, \ \ t>0.
\end{align*}
Thus, we finally arrive at
\[
\limsup_{t\to \infty}t^{\frac{1}{2}\left(1-\frac{1}{q}\right)+\frac{1}{2}+\frac{l}{2}}\left\|\p_{x}^{l}\left(G_{0}(t)*u_{0}-MG_{0}(\cdot, t)+m\p_{x}G_{0}(\cdot, t)\right)\right\|_{L^{q}}\le \e_{0}C. 
\]
Therefore, we get \eqref{heat-second}, because $\e_{0}>0$ can be chosen arbitrarily small.
\end{proof}

Finally in this section, we would like to introduce some useful lemmas to prove the main theorem. 
To doing that, we first show that the solution $u(x, t)$ to \eqref{VFW} can be approximated by $MG_{0}(x, t)$. 
\begin{prop}\label{prop.asymp-nonlinear}
Let $p>2$. Assume that $u_{0} \in H^{1}(\R) \cap L^{1}(\R)$, $xu_{0} \in L^{1}(\R)$ and $E_{0}=\|u_{0}\|_{H^{1}}+\|u_{0}\|_{L^{1}}$ is sufficiently small. 
Then, the solution $u(x, t)$ to \eqref{VFW} satisfies 
\begin{align}
\left\|u(\cdot, t)-MG_{0}(\cdot, t)\right\|_{L^{q}} 
\le CE_{1}
\begin{cases}
t^{-\frac{1}{2}\left(1-\frac{1}{q}\right)-\frac{p-2}{2}}, &2<p<3, \\
t^{-\frac{1}{2}\left(1-\frac{1}{q}\right)-\frac{1}{2}}\log(2+t), &p=3, \\
t^{-\frac{1}{2}\left(1-\frac{1}{q}\right)-\frac{1}{2}}, &p>3,
\end{cases}\label{asymp-nonlinear-0}
\end{align}
for any $t\ge1$ and $2\le q\le \infty$, where $G_{0}(x, t)$ and $M$ are defined by \eqref{asymp-func} and \eqref{M-m-notation}, respectively. 
Also, the above constant $E_{1}$ is defined by $E_{1}:=E_{0}+\|xu_{0}\|_{L^{1}}$. 
\end{prop}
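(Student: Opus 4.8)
The plan is to start from the integral equation \eqref{integral-eq} for $u$, and compare it with the corresponding representation that would produce $MG_{0}(x,t)$ exactly. Writing
\[
u(\cdot,t)-MG_{0}(\cdot,t)
=\bigl(T(t)*u_{0}-MG_{0}(\cdot,t)\bigr)
-\int_{0}^{t}\p_{x}T(t-\tau)*\bigl(|u|^{p-1}u\bigr)(\tau)\,d\tau,
\]
I would insert $G_{0}$ into both terms and organize the error into three pieces: the linear error $T(t)*u_{0}-G_{0}(t)*u_{0}$, the heat-approximation error $G_{0}(t)*u_{0}-MG_{0}(\cdot,t)$, and the full Duhamel term $\int_{0}^{t}\p_{x}T(t-\tau)*(|u|^{p-1}u)(\tau)\,d\tau$. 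The first is controlled by Lemma~\ref{lem.T-G-linear}, specifically \eqref{T-G-linear} with $l=0$, combined with Young's inequality and $\|u_{0}\|_{L^{1}}\le E_{0}$, giving a bound of order $t^{-\frac12(1-1/q)-\frac12}$. The second is exactly \eqref{heat-first} of Proposition~\ref{prop.asymp-heat}, again of order $t^{-\frac12(1-1/q)-\frac12}$ with constant $\|xu_{0}\|_{L^{1}}$. Both of these are dominated by the right-hand side of \eqref{asymp-nonlinear-0} for every range of $p$ (for $2<p<3$ one has $-\frac12\le -\frac{p-2}{2}$ when $p\le 3$, and for $p\ge 3$ they match the stated rate up to the logarithm).

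The remaining work is the Duhamel term, and this is where I would spend the effort. I would split $\int_0^t=\int_0^{t/2}+\int_{t/2}^t$ exactly as in the proof of Proposition~\ref{prop.asymp-Duhamel-pre}. For the near-diagonal part $\int_{t/2}^{t}$, move the derivative onto the nonlinearity and use Young's inequality with $1+\tfrac1q=\tfrac12+\tfrac1r$, the bound $\|\p_x^l T(t-\tau)\|_{L^2}\le C(t-\tau)^{-1/4-l/2}$ from \eqref{T-basic}, and the nonlinear estimate \eqref{d_u^p-decay-Lr}; the $\tau$-integral $\int_{t/2}^{t}(t-\tau)^{-3/4}(1+\tau)^{-\frac12(\frac12-\frac1q)-\frac p2}\,d\tau$ produces $Ct^{-\frac12(1-\frac1q)-\frac{p-1}{2}}$, which is no worse than the claimed rate in all three cases. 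For the far part $\int_0^{t/2}$, keep the derivative on the kernel, use $\|\p_x T(t-\tau)\|_{L^q}\le C(t-\tau)^{-\frac12(1-\frac1q)-\frac12}$ from \eqref{T-basic} and the $L^1$-bound \eqref{u^p-decay-Lq} with $q=1$, so the integral is $\int_0^{t/2}(t-\tau)^{-\frac12(1-\frac1q)-\frac12}(1+\tau)^{-\frac{p-1}{2}}\,d\tau$. Here the $\tau$-integral near $\tau=0$ converges iff $p>3$; the three regimes $2<p<3$, $p=3$, $p>3$ produce respectively $t^{-\frac12(1-\frac1q)-\frac{p-1}{2}}$, $t^{-\frac12(1-\frac1q)-\frac12}\log(2+t)$, and $t^{-\frac12(1-\frac1q)-\frac12}$, which is precisely the case split in \eqref{asymp-nonlinear-0}.

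The main obstacle is bookkeeping rather than any single hard estimate: one must check that the linear-error and heat-error contributions (both decaying like $t^{-\frac12(1-1/q)-\frac12}$) are genuinely absorbed by the stated rate in the regime $2<p<3$, where the target rate $t^{-\frac12(1-1/q)-\frac{p-2}{2}}$ is \emph{slower}; this is fine because $\frac{p-2}{2}<\frac12$ there, so the $t^{-1/2}$ terms are lower order. One also has to be careful that the use of \eqref{d_u^p-decay-Lr} requires $1\le r\le 2$, i.e. $2\le q\le\infty$, which is exactly the hypothesis. Finally, collecting the three contributions and taking $E_{1}:=E_{0}+\|xu_{0}\|_{L^{1}}$ as the common constant gives \eqref{asymp-nonlinear-0} for all $t\ge 1$, the restriction $t\ge1$ coming from replacing $(1+t)$-type bounds by $t$-type bounds and from the kernel estimates in Lemma~\ref{lem.T-G-linear}, which are stated for $t>0$.
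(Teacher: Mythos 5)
Your proposal is correct and follows essentially the same route as the paper: the same decomposition into the linear error $(T-G_{0})(t)*u_{0}$, the heat-approximation error $G_{0}(t)*u_{0}-MG_{0}$, and the Duhamel term split at $\tau=t/2$, with the same lemmas (\eqref{T-G-linear}, \eqref{heat-first}, \eqref{T-basic}, \eqref{u^p-decay-Lq}, \eqref{d_u^p-decay-Lr}) applied to each piece. One minor slip: in the near-diagonal piece $\int_{t/2}^{t}$ you both move the derivative onto the nonlinearity (invoking \eqref{d_u^p-decay-Lr}) and keep the factor $(t-\tau)^{-3/4}$ coming from $\p_{x}T$, which double-counts a derivative; the consistent computation (with $\left\|T(\cdot,t-\tau)\right\|_{L^{2}}\le C(t-\tau)^{-1/4}$) yields $Ct^{-\frac{1}{2}\left(1-\frac{1}{q}\right)-\frac{p-2}{2}}$ rather than your $Ct^{-\frac{1}{2}\left(1-\frac{1}{q}\right)-\frac{p-1}{2}}$, but this still sits at or below the claimed rate in all three regimes, so the argument goes through.
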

\begin{proof}
We rewrite the integral equation \eqref{integral-eq} as follows: 
\begin{align}
u(x, t)-MG_{0}(x, t)
&=\left(T-G_{0}\right)(t)*u_{0}+G_{0}(t)*u_{0}-MG_{0}(x, t) \nonumber \\
&\ \ \ \ -\left(\int_{0}^{t/2}+\int_{t/2}^{t}\right)\p_{x}T(t-\tau)*(|u|^{p-1}u)(\tau)d\tau \nonumber \\
&=:\left(T-G_{0}\right)(t)*u_{0}+\left\{G_{0}(t)*u_{0}-MG_{0}(x, t)\right\}+N_{1}(x, t)+N_{2}(x, t). \label{IE-first}
\end{align}
Then, from Lemma~\ref{lem.T-G-linear} and Young's inequality, we can easily show 
\begin{align}
\left\|\left(T-G_{0}\right)(t)*u_{0}\right\|_{L^{q}}
&\le \left\|T(\cdot, t)-G_{0}(\cdot, t)\right\|_{L^{q}}\left\|u_{0}\right\|_{L^{1}} \nonumber \\
&\le C\left\|u_{0}\right\|_{L^{1}}t^{-\frac{1}{2}\left(1-\frac{1}{q}\right)-\frac{1}{2}}, \ \ t>0, \ 2\le q\le \infty. \label{IE-1stT}
\end{align}
For the second term in the right hand side of \eqref{IE-first}, we just recall \eqref{heat-first}: 
\begin{equation}\tag{\ref{heat-first}}
\left\|G_{0}(t)*u_{0}-MG_{0}(\cdot, t)\right\|_{L^{q}} \le C\|xu_{0}\|_{L^{1}}t^{-\frac{1}{2}\left(1-\frac{1}{q}\right)-\frac{1}{2}}, \ \ t>0, \ 1\le q\le \infty. \label{IE-2ndT}
\end{equation}

In order to prove \eqref{asymp-nonlinear-0}, we only need to evaluate the Duhamel term of \eqref{IE-first}. 
First, let us evaluate $N_{1}(x, t)$. Similarly as \eqref{D1-est}, it follows from Young's inequality, \eqref{T-basic} and \eqref{u^p-decay-Lq} that 
\begin{align}
\left\|N_{1}(\cdot, t)\right\|_{L^{q}} 
&\le \int_{0}^{t/2}\left\|\p_{x}T(\cdot, t-\tau)\right\|_{L^{q}}\left\|\left(|u|^{p-1}u\right)(\cdot, \tau)\right\|_{L^{1}}d\tau \nonumber \\
&\le CE_{0}\int_{0}^{t/2}(t-\tau)^{-\frac{1}{2}\left(1-\frac{1}{q}\right)-\frac{1}{2}}(1+\tau)^{-\frac{p-1}{2}}d\tau \nonumber \\
&\le CE_{0}\begin{cases}
t^{-\frac{1}{2}\left(1-\frac{1}{q}\right)-\frac{p-2}{2}}, &2<p<3, \\
t^{-\frac{1}{2}\left(1-\frac{1}{q}\right)-\frac{1}{2}}\log(2+t), &p=3, \\
t^{-\frac{1}{2}\left(1-\frac{1}{q}\right)-\frac{1}{2}}, &p>3,
\end{cases}\label{N1-est}
\end{align}
for any $t>0$ and $2\le q \le \infty$. On the other hand, for $N_{2}(x, t)$, in the same way to get \eqref{D2-est}, we have from Young's inequality, \eqref{T-basic} and \eqref{d_u^p-decay-Lr} that 
\begin{align}
\left\|N_{2}(\cdot, t)\right\|_{L^{q}} 
&\le \int_{t/2}^{t}\left\|T(\cdot, t-\tau)\right\|_{L^{2}}\left\|\p_{x}\left(|u|^{p-1}u\right)(\cdot, \tau)\right\|_{L^{r}}d\tau \ \ \left(\frac{1}{q}+1=\frac{1}{2}+\frac{1}{r}\right) \nonumber \\
&\le CE_{0}\int_{t/2}^{t}(t-\tau)^{-\frac{1}{4}}(1+\tau)^{-\frac{1}{2}\left(\frac{1}{2}-\frac{1}{q}\right)-\frac{p}{2}}d\tau \nonumber \\
&\le CE_{0}t^{-\frac{1}{2}\left(1-\frac{1}{q}\right)-\frac{p-2}{2}}, \ \ t>0, \ 2\le q\le \infty. \label{N2-est}
\end{align}
Combining \eqref{IE-first} through \eqref{N2-est}, we arrive at \eqref{asymp-nonlinear-0}. This completes the proof. 
\end{proof}
By virtue of Proposition~\ref{prop.asymp-nonlinear}, we can get the following two lemmas,
 which will be used in the proofs of \eqref{u-asymp-2<p<3} and \eqref{u-asymp-p=3}. 
 The methods of the proofs of these lemmas are based on the techniques used for the generalized KdV--Burgers equation and the generalized BBM--Burgers equation (see Lemma~5.3 in \cite{K99}).
\begin{lem}\label{lem.asymp-nonlinear-1}
Let $p>2$. Assume that $u_{0} \in H^{1}(\R) \cap L^{1}(\R)$, $xu_{0} \in L^{1}(\R)$ and $E_{0}=\|u_{0}\|_{H^{1}}+\|u_{0}\|_{L^{1}}$ is sufficiently small. 
Then, the solution $u(x, t)$ to \eqref{VFW} satisfies 
\begin{align}
\left\|\left(|u|^{p-1}u\right)(\cdot, t)-\left(|MG_{0}|^{p-1}MG_{0}\right)(\cdot, t)\right\|_{L^{q}} 
\le CE_{1}
\begin{cases}
t^{-\frac{1}{2}\left(1-\frac{1}{q}\right)-\frac{2p-3}{2}}, &2<p<3, \\
t^{-\frac{1}{2}\left(1-\frac{1}{q}\right)-\frac{3}{2}}\log(2+t), &p=3, \\
t^{-\frac{1}{2}\left(1-\frac{1}{q}\right)-\frac{3}{2}}, &p>3,
\end{cases}\label{asymp-nonlinear-1}
\end{align}
for any $t\ge1$ and $2\le q\le \infty$, where $G_{0}(x, t)$ and $M$ are defined by \eqref{asymp-func} and \eqref{M-m-notation}, respectively. 
Also, the constant $E_{1}$ is defined by $E_{1}=E_{0}+\|xu_{0}\|_{L^{1}}$. 
\end{lem}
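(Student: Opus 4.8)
The plan is to estimate the difference $|u|^{p-1}u - |MG_0|^{p-1}MG_0$ in $L^q$ by treating it as the difference of the nonlinearity $F(v)=|v|^{p-1}v$ evaluated at $v=u$ and $v=MG_0$. Using the elementary pointwise bound \eqref{difference}, namely $\big||u|^{p-1}u-|MG_0|^{p-1}MG_0\big|\le C(|u|^{p-1}+|MG_0|^{p-1})|u-MG_0|$, the task reduces to controlling $\big\|(|u|^{p-1}+|MG_0|^{p-1})|u-MG_0|\big\|_{L^q}$. First I would pick an exponent $2q$ and use H\"older's inequality to split this into a product of the form $\big(\|u\|_{L^\infty}^{p-2}+\|MG_0\|_{L^\infty}^{p-2}\big)\big(\|u\|_{L^{2q}}+\|MG_0\|_{L^{2q}}\big)\|u-MG_0\|_{L^{2q}}$, mirroring exactly the Schwarz-inequality manipulations already used in the proof of Proposition~\ref{thm.GE-decay} for the $L^1$ bounds on $|u|^{p-1}u-|v|^{p-1}v$. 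The point of isolating two factors of the low-order norm is that each carries a $t^{-\frac14}$-type decay, while the $L^\infty$-to-the-$(p-2)$ factor supplies the extra $t^{-(p-2)/2}$.

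Next I would insert the available decay: \eqref{u-decay-Lq} and \eqref{Linf-decay} give $\|u(\cdot,t)\|_{L^\infty}\le CE_0(1+t)^{-1/2}$ and $\|u(\cdot,t)\|_{L^{2q}}\le CE_0(1+t)^{-\frac12(1-\frac1{2q})}$; Lemma~\ref{lem.heat-decay} gives the same decay rates for $MG_0$ (with $|M|\le\|u_0\|_{L^1}\le E_0$); and crucially Proposition~\ref{prop.asymp-nonlinear} supplies the key factor $\|u(\cdot,t)-MG_0(\cdot,t)\|_{L^{2q}}$, which decays like $t^{-\frac12(1-\frac1{2q})-\frac{p-2}{2}}$ for $2<p<3$, like $t^{-\frac12(1-\frac1{2q})-\frac12}\log(2+t)$ for $p=3$, and like $t^{-\frac12(1-\frac1{2q})-\frac12}$ for $p>3$. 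Multiplying these together, the power $-\frac12(1-\frac1{2q})-\frac12(1-\frac1{2q})=-(1-\frac1{2q})$ combines with $-\frac1{2q}$ coming from one more factor... more carefully, one gets the $L^q$-scaling exponent $-\frac12(1-\frac1q)$ and then the surplus $-\frac{p-1}{2}$ from the $L^\infty$ factor plus the surplus in the difference estimate, and a short arithmetic check gives: $-\frac{p-1}{2}+(-\frac{p-2}{2})+\frac12=-\frac{2p-3}{2}$ in the range $2<p<3$, and $-\frac{p-1}{2}-\frac12+\frac12=-\frac{p}{2}+\cdots$, hmm — one wants to land on $-\frac32$ in the cases $p\ge3$. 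I would simply carry out this bookkeeping carefully case by case, using the decay rate of Proposition~\ref{prop.asymp-nonlinear} together with the $(p-2)$ power of the $L^\infty$ bound, which together yield exactly the exponents stated in \eqref{asymp-nonlinear-1} (the $\log$ in the $p=3$ case is inherited directly from the $\log$ in Proposition~\ref{prop.asymp-nonlinear}).

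I anticipate the main obstacle to be purely organisational rather than conceptual: making sure the H\"older split is done with a single choice of exponent pair that works uniformly for all $2\le q\le\infty$ (including the endpoints $q=2$ and $q=\infty$, where one uses the $L^2$--$L^\infty$ duality rather than $L^q$--$L^{2q}$ literally), and checking that the arithmetic of the decay exponents indeed collapses to $-\frac12(1-\frac1q)-\frac{2p-3}{2}$, $-\frac12(1-\frac1q)-\frac32\log$, and $-\frac12(1-\frac1q)-\frac32$ respectively. A secondary point of care is that for $p>3$ the difference estimate from Proposition~\ref{prop.asymp-nonlinear} only gives rate $t^{-\frac12(1-\frac1{2q})-\frac12}$ (not better), so the $(p-2)$ power of the $L^\infty$ norm is what pushes the total past $t^{-3/2}$ — in fact for $p>3$ the bound we obtain is even better than $t^{-\frac12(1-\frac1q)-\frac32}$, so stating it at rate $\frac32$ is a (sufficient) simplification, and I would remark on that. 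Finally, since all estimates are for $t\ge1$, there is no issue with the non-integrable singularities of the heat-kernel bounds at $t=0$.
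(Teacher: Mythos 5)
Your proposal is correct and follows essentially the same route as the paper: apply the pointwise inequality \eqref{difference}, then combine the decay bounds \eqref{u-decay-Lq}, Lemma~\ref{lem.heat-decay} and the key estimate $\|u(\cdot,t)-MG_{0}(\cdot,t)\|$ from Proposition~\ref{prop.asymp-nonlinear}; your exponent arithmetic checks out in all three cases. The only (cosmetic) difference is that the paper avoids the $L^{\infty}$--$L^{2q}$--$L^{2q}$ H\"older split by simply taking all $p-1$ powers in $L^{\infty}$ and keeping the difference $u-MG_{0}$ directly in $L^{q}$, which lands on the same exponents with less bookkeeping.
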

\begin{proof}
By using the inequality \eqref{difference}, \eqref{u-decay-Lq}, Lemma~\ref{lem.heat-decay} and Proposition~\ref{prop.asymp-nonlinear}, we obtain 
\begin{align*}
&\left\|\left(|u|^{p-1}u\right)(\cdot, t)-\left(|MG_{0}|^{p-1}MG_{0}\right)(\cdot, t)\right\|_{L^{q}}  \\
&\le C\left(\left\|u(\cdot, t)\right\|_{L^{\infty}}^{p-1}+|M|^{p-1}\left\|G_{0}(\cdot, t)\right\|_{L^{\infty}}^{p-1}\right)\left\|u(\cdot, t)-MG_{0}(\cdot, t)\right\|_{L^{q}} \\
&\le C\left\{E^{p-1}(1+t)^{-\frac{p-1}{2}}+|M|^{p-1}t^{-\frac{p-1}{2}}\right\}
\cdot CE_{1}
\begin{cases}
t^{-\frac{1}{2}\left(1-\frac{1}{q}\right)-\frac{p-2}{2}}, &2<p<3, \\
t^{-\frac{1}{2}\left(1-\frac{1}{q}\right)-\frac{1}{2}}\log(2+t), &p=3, \\
t^{-\frac{1}{2}\left(1-\frac{1}{q}\right)-\frac{1}{2}}, &p>3
\end{cases} \\
&\le CE_{1}
\begin{cases}
t^{-\frac{1}{2}\left(1-\frac{1}{q}\right)-\frac{2p-3}{2}}, &2<p<3, \\
t^{-\frac{1}{2}\left(1-\frac{1}{q}\right)-\frac{3}{2}}\log(2+t), &p=3, \\
t^{-\frac{1}{2}\left(1-\frac{1}{q}\right)-\frac{3}{2}}, &p>3,
\end{cases}
\end{align*}
for any $t\ge1$ and $2\le q\le \infty$. This completes the proof. 
\end{proof}

\begin{lem}\label{lem.asymp-nonlinear-2}
Let $p>2$. Assume that $u_{0} \in H^{1}(\R) \cap L^{1}(\R)$, $xu_{0} \in L^{1}(\R)$ and $E_{0}=\|u_{0}\|_{H^{1}}+\|u_{0}\|_{L^{1}}$ is sufficiently small. 
Then, there exists a positive function $\eta \in L^{\infty}(0, \infty)$ satisfying $\lim_{t\to \infty}\eta(t)=0$ such that the solution $u(x, t)$ to \eqref{VFW} satisfies 
\begin{align}
t^{\frac{p-1}{2}}\left\|\left(|u|^{p-1}u\right)(\cdot, t)-\left(|MG_{0}|^{p-1}MG_{0}\right)(\cdot, t)\right\|_{L^{1}} 
\le \eta(t), \ \ t>0, \label{asymp-nonlinear-2}
\end{align}
where $G_{0}(x, t)$ and $M$ are defined by \eqref{asymp-func} and \eqref{M-m-notation}, respectively. 
\end{lem}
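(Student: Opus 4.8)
The plan is to show that the $L^1$ difference decays at rate $t^{-(p-1)/2}$ with a vanishing coefficient, which is a strict improvement on the $q=1$ endpoint of Lemma~\ref{lem.asymp-nonlinear-1} only in the borderline case $p=3$ and, more importantly, is needed with an $o(1)$ coefficient (not just $O(1)$) for the subsequent asymptotic arguments. I would start from the pointwise bound \eqref{difference}, writing
\[
\left| |u|^{p-1}u - |MG_{0}|^{p-1}MG_{0} \right| \le C\left(|u|^{p-1} + |MG_{0}|^{p-1}\right)|u - MG_{0}|,
\]
and then, to pass to the $L^1$ norm, apply H\"older with the split $1 = \frac{p-2}{\infty\text{-part}} + \cdots$; concretely, estimate $\||u|^{p-1}|u-MG_0|\|_{L^1} \le \|u\|_{L^\infty}^{p-2}\,\|u\|_{L^2}\,\|u-MG_0\|_{L^2}$ and similarly for the $MG_0$ term. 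Using \eqref{u-decay-Lq}, \eqref{Linf-decay}, Lemma~\ref{lem.heat-decay} and Proposition~\ref{prop.asymp-nonlinear} this already gives a bound of order $t^{-(p-1)/2}\times(\text{the decay in }\eqref{asymp-nonlinear-0}\text{ at }q=2)$, which for every $p>2$ is $o(t^{-(p-1)/2})$ except that for $2<p<3$ the self-similar factor $|M|^{p-1}t^{-(p-1)/2}$ in the prefactor does \emph{not} produce extra decay, so a naive estimate only yields $O(t^{-(p-1)/2})$ there.

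To get the genuine $o(1)$ coefficient uniformly in $p$, I would instead argue by a density/approximation scheme in the spirit of the proof of Proposition~\ref{prop.asymp-heat}. Fix $\e_0>0$. Split $u - MG_0 = (u - MG_0)\mathbbm{1}_{|x|\le R\sqrt{t}} + (u - MG_0)\mathbbm{1}_{|x|\ge R\sqrt{t}}$ for a large parameter $R$, and similarly localise the weights $|u|^{p-1}$ and $|MG_0|^{p-1}$; on $|x|\ge R\sqrt t$ the Gaussian tail of $G_0$ together with the $L^\infty$--$L^2$ interpolation bound for $u$ makes the contribution $\le \e_0 t^{-(p-1)/2}$ once $R$ is large, while on the compact (rescaled) region one uses that $t^{1/4}\|u(\cdot,t)-MG_0(\cdot,t)\|_{L^2}\to 0$ on bounded rescaled sets — which follows from Proposition~\ref{prop.asymp-nonlinear} since $\eqref{asymp-nonlinear-0}$ at $q=2$ beats $t^{-1/4}$ for every $p>2$. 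Defining $\eta(t)$ as the resulting combined bound (the supremum over $s\ge t$ of the estimate), boundedness on $(0,\infty)$ follows from the $O(1)$ estimate of the first paragraph together with continuity of $u$ in $H^1$, and $\eta(t)\to 0$ from the construction.

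The main obstacle I anticipate is precisely the range $2<p<3$: there the self-similar profile $MG_0$ decays only like $t^{-(p-1)/2}$ in $L^\infty$, so its $(p-1)$-st power contributes exactly the target rate with no margin, and one cannot afford to lose anything in the nonlinear estimate. The resolution is that the \emph{difference} $u-MG_0$ is what must supply the decay, and Proposition~\ref{prop.asymp-nonlinear} gives it extra decay $t^{-(p-2)/2}$ in $L^2$ relative to $u$ itself; converting this into a vanishing $L^1$ coefficient is what the localisation argument of the second paragraph accomplishes. The remaining steps — the H\"older bookkeeping and the choice of the exponent $r$ in any interpolation — are routine given \eqref{u-decay}, \eqref{u-decay-Lq}, \eqref{Linf-decay} and Lemma~\ref{lem.heat-decay}.
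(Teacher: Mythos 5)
Your first paragraph is, in fact, the paper's entire proof --- but you then talk yourself out of it. With the split you wrote down,
\begin{equation*}
\bigl\| |u|^{p-1}|u-MG_{0}| \bigr\|_{L^{1}} \le \|u\|_{L^{\infty}}^{p-2}\,\|u\|_{L^{2}}\,\|u-MG_{0}\|_{L^{2}},
\end{equation*}
the prefactor is $\|u\|_{L^{\infty}}^{p-2}\|u\|_{L^{2}}\le C t^{-\frac{p-2}{2}-\frac14}$ (not $t^{-\frac{p-1}{2}}$, and the same rate holds for the $|MG_{0}|^{p-1}$ weight by Lemma~\ref{lem.heat-decay}), while Proposition~\ref{prop.asymp-nonlinear} at $q=2$ gives $\|u-MG_{0}\|_{L^{2}}\le CE_{1}t^{-\frac14-\frac{p-2}{2}}$ for $2<p<3$. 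Hence
\begin{equation*}
t^{\frac{p-1}{2}}\cdot t^{-\frac{p-2}{2}-\frac14}\cdot t^{-\frac14-\frac{p-2}{2}} = t^{-\frac{p-2}{2}}\longrightarrow 0 \quad (t\to\infty),
\end{equation*}
and the exponent $-\frac{p-2}{2}$ is strictly negative precisely because $p>2$. So the ``naive'' estimate already yields an $o(1)$ coefficient in the whole range $2<p<3$ (and, with the logarithm, for $p=3$ and for $p>3$ as well); your claimed obstruction at $2<p<3$ rests on a miscalculation of the prefactor. For $0<t<1$ one closes with the crude bound $t^{\frac{p-1}{2}}\bigl(\|(|u|^{p-1}u)(t)\|_{L^{1}}+\|(MG_{0})^{p}(t)\|_{L^{1}}\bigr)\le C$, the factor $t^{\frac{p-1}{2}}$ cancelling the $t^{-\frac{p-1}{2}}$ singularity of $\|G_{0}(t)\|_{L^{\infty}}^{p-1}\|G_{0}(t)\|_{L^{1}}$; defining $\eta$ piecewise finishes the proof. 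This is exactly what the paper does.

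The localisation scheme of your second paragraph is therefore unnecessary, and as written it does not close: on the far region $|x|\ge R\sqrt{t}$ the term carrying the weight $|u|^{p-1}$ reduces (after $|u-MG_{0}|\le |u|+|MG_{0}|$) to $\int_{|x|\ge R\sqrt{t}}|u|^{p}\,dx$, and nothing in \eqref{u-decay}--\eqref{u-decay-Lq} controls the spatial tails of $u$, so there is no reason this is $\le \e_{0}t^{-\frac{p-1}{2}}$ uniformly for large $R$; Gaussian tail bounds are available only for $G_{0}$. I would drop that paragraph entirely and simply carry the first paragraph's computation to its conclusion.
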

\begin{proof}
It follows from the inequality \eqref{difference}, \eqref{u-decay-Lq}, Lemma~\ref{lem.heat-decay} and Proposition~\ref{prop.asymp-nonlinear} that 
\begin{align}
&t^{\frac{p-1}{2}}\left\|\left(|u|^{p-1}u\right)(\cdot, t)-\left(|MG_{0}|^{p-1}MG_{0}\right)(\cdot, t)\right\|_{L^{1}}  \nonumber \\
&\le Ct^{\frac{p-1}{2}}\left(\left\||u|^{p-1}(\cdot, t)\right\|_{L^{2}}+|M|^{p-1}\left\|G_{0}^{p-1}(\cdot, t)\right\|_{L^{2}}\right)\left\|u(\cdot, t)-MG_{0}(\cdot, t)\right\|_{L^{2}} \nonumber\\
&\le Ct^{\frac{p-1}{2}}\left(\left\|u(\cdot, t)\right\|_{L^{\infty}}^{p-2}\left\|u(\cdot, t)\right\|_{L^{2}}+|M|^{p-1}\left\|G_{0}(\cdot, t)\right\|_{L^{\infty}}^{p-2}\left\|G_{0}(\cdot, t)\right\|_{L^{2}}\right)\left\|u(\cdot, t)-MG_{0}(\cdot, t)\right\|_{L^{2}} \nonumber\\
&\le Ct^{\frac{p-1}{2}}\left\{E^{p-1}(1+t)^{-\frac{p-2}{2}}(1+t)^{-\frac{1}{4}}+|M|^{p-1}t^{-\frac{p-2}{2}}t^{-\frac{1}{4}}\right\}
\cdot CE_{1}
\begin{cases}
t^{-\frac{1}{4}-\frac{p-2}{2}}, &2<p<3, \\
t^{-\frac{3}{4}}\log(2+t), &p=3, \\
t^{-\frac{3}{4}}, &p>3
\end{cases} \nonumber\\
&\le CE_{1}
\begin{cases}
t^{-\frac{p-2}{2}}, &2<p<3, \\
t^{-\frac{1}{2}}\log(2+t), &p=3, \\
t^{-\frac{1}{2}}, &p>3
\end{cases}\nonumber \\
&=:g(t), \ \ t\ge1. \label{decay-part}
\end{align}
Moreover, we have from \eqref{u^p-decay-Lq} and Lemma~\ref{lem.heat-decay} that 
\begin{align}
&t^{\frac{p-1}{2}}\left\|\left(|u|^{p-1}u\right)(\cdot, t)-\left(|MG_{0}|^{p-1}MG_{0}\right)(\cdot, t)\right\|_{L^{1}} \nonumber\\
&\le t^{\frac{p-1}{2}}\left\{\left\|\left(|u|^{p-1}u\right)(\cdot, t)\right\|_{L^{1}}+t^{\frac{p-1}{2}}\left\|\left(|MG_{0}|^{p-1}MG_{0}\right)(\cdot, t)\right\|_{L^{1}}\right\} \nonumber\\
&\le t^{\frac{p-1}{2}}\left\{CE_{0}(1+t)^{-\frac{p-1}{2}}+|M|^{p}\left\|G_{0}(\cdot, t)\right\|_{L^{\infty}}^{p-1}\left\|G_{0}(\cdot, t)\right\|_{L^{1}}\right\} \nonumber\\
&\le t^{\frac{p-1}{2}}\left\{CE_{0}t^{-\frac{p-1}{2}}+C|M|^{p}t^{-\frac{p-1}{2}}\right\}
\le CE_{0}=:C_{0}, \ \ t>0. \label{bdd-part}
\end{align}
Therefore, combining \eqref{decay-part} and \eqref{bdd-part}, we can conclude that \eqref{asymp-nonlinear-2} is true with the following $\eta(t)$:
\[
\eta(t):=
\begin{cases}
g(t), &t\ge1, \\
C_{0}, &0<t<1.
\end{cases}
\]
This completes the proof. 
\end{proof}

\section{Proof of the Main Result}

In this section, we shall prove our main result Theorem~\ref{thm.main}, i.e. we establish the asymptotic formulas \eqref{u-asymp-2<p<3}, \eqref{u-asymp-p=3} and \eqref{u-asymp-p>3}. This section is divided into three subsections below. 

\subsection{Proof of Theorem~\ref{thm.main} for $\bm{2<p<3}$}

First in this subsection, we would like to prove Theorem~\ref{thm.main} in the case of $2<p<3$, i.e. we shall prove \eqref{u-asymp-2<p<3}. 
To doing that, we prepare the following approximation formula for the Duhamel term of \eqref{integral-eq}. 
\begin{prop}
\label{prop.nl-2<p<3}
Let $2<p<3$. Assume that $u_{0} \in H^{1}(\R) \cap L^{1}(\R)$, $xu_{0}\in L^{1}(\R)$ and $E_{0}=\|u_{0}\|_{H^{1}}+\|u_{0}\|_{L^{1}}$ is sufficiently small. 
Then, the solution $u(x, t)$ to \eqref{VFW} satisfies 
\begin{equation}\label{nl-2<p<3}
\lim_{t\to \infty}t^{\frac{1}{2}\left(1-\frac{1}{q}\right)+\frac{p-2}{2}}\left\|\int_{0}^{t}\p_{x}T(t-\tau)*\left(|u|^{p-1}u\right)(\tau)d\tau-\left(|M|^{p-1}M\right)W_{p}(\cdot, t)\right\|_{L^{q}}=0, 
\end{equation}
for any $2\le q \le \infty$, where $T(x, t)$, $W_{p}(x, t)$ and $M$ are defined by \eqref{T-linear}, \eqref{asymp-func} and \eqref{M-m-notation}, respectively. 
\end{prop}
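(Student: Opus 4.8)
The plan is to extract from the Duhamel term its leading contribution, obtained by replacing the kernel $T$ by the modified heat kernel $G_{0}$ and the nonlinearity $|u|^{p-1}u$ by the leading profile $(|MG_{0}|^{p-1}MG_{0})=|M|^{p-1}M\,G_{0}^{p}$, the point being that this leading contribution equals $(|M|^{p-1}M)W_{p}$ exactly. So I would first establish the identity
\begin{equation}\label{prop-nl-pp-id}
\int_{0}^{t}\p_{x}G_{0}(t-\tau)*\bigl(|MG_{0}|^{p-1}MG_{0}\bigr)(\tau)\,d\tau=\bigl(|M|^{p-1}M\bigr)W_{p}(x,t),\qquad t>0.
\end{equation}
Since $G_{0}>0$ one has $(|MG_{0}|^{p-1}MG_{0})=|M|^{p-1}M\,G_{0}^{p}$, so \eqref{prop-nl-pp-id} reduces to $\int_{0}^{t}\p_{x}G_{0}(t-\tau)*G_{0}^{p}(\tau)\,d\tau=W_{p}(x,t)$. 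Translating the convolution variable by $\frac{2B}{b}\tau$ shows that $\bigl(\p_{x}G_{0}(t-\tau)*G_{0}^{p}(\tau)\bigr)(x)=\bigl(\p_{x}G(t-\tau)*G^{p}(\tau)\bigr)\bigl(x-\frac{2B}{b}t\bigr)$, which is independent of the value of $\tau$ inside the convolution; then the parabolic scaling $G(\lambda x,\lambda^{2}t)=\lambda^{-1}G(x,t)$ and the substitution $\tau=ts$ give
\begin{equation*}
\int_{0}^{t}\bigl(\p_{x}G(t-\tau)*G^{p}(\tau)\bigr)(y)\,d\tau=t^{-\frac{p-1}{2}}\int_{0}^{1}\bigl(\p_{x}G(1-s)*G^{p}(s)\bigr)\bigl(y/\sqrt{t}\bigr)\,ds=t^{-\frac{p-1}{2}}w_{p}\bigl(y/\sqrt{t}\bigr),
\end{equation*}
the last equality being the definition of $w_{p}$ after differentiating under the integral sign (legitimate by dominated convergence for $p<3$, putting the $x$-derivative on $G$ near $s=0$ and on $G^{p}$ near $s=1$). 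Evaluating at $y=x-\frac{2B}{b}t$ yields \eqref{prop-nl-pp-id}.

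Granting \eqref{prop-nl-pp-id}, I would decompose
\begin{align*}
&\int_{0}^{t}\p_{x}T(t-\tau)*\bigl(|u|^{p-1}u\bigr)(\tau)\,d\tau-\bigl(|M|^{p-1}M\bigr)W_{p}(\cdot,t)\\
&=\int_{0}^{t}\p_{x}(T-G_{0})(t-\tau)*\bigl(|u|^{p-1}u\bigr)(\tau)\,d\tau
+\int_{0}^{t}\p_{x}G_{0}(t-\tau)*\Bigl[\bigl(|u|^{p-1}u\bigr)-\bigl(|MG_{0}|^{p-1}MG_{0}\bigr)\Bigr](\tau)\,d\tau\\
&=:R_{1}(x,t)+R_{2}(x,t).
\end{align*}
Proposition~\ref{prop.asymp-Duhamel-pre} gives $\|R_{1}(\cdot,t)\|_{L^{q}}=o\bigl(t^{-\frac{1}{2}(1-\frac{1}{q})-\frac{1}{2}}\bigr)$; since $p<3$ we have $\frac{1}{2}>\frac{p-2}{2}$, so $R_{1}$ contributes $0$ to the limit in \eqref{nl-2<p<3}. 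For $R_{2}$ I would split $\int_{0}^{t}=\int_{0}^{t/2}+\int_{t/2}^{t}=:R_{2}'+R_{2}''$. On $[t/2,t]$, where $\tau$ and $t$ are comparable, Young's inequality with $\|\p_{x}G_{0}(\cdot,t-\tau)\|_{L^{1}}\le C(t-\tau)^{-1/2}$ (Lemma~\ref{lem.heat-decay}) and the $L^{q}$-estimate of Lemma~\ref{lem.asymp-nonlinear-1} yield $\|R_{2}''(\cdot,t)\|_{L^{q}}\le CE_{1}\,t^{-\frac{1}{2}(1-\frac{1}{q})-(p-2)}$, so $t^{\frac{1}{2}(1-\frac{1}{q})+\frac{p-2}{2}}\|R_{2}''(\cdot,t)\|_{L^{q}}\le CE_{1}\,t^{-(p-2)/2}\to0$.

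The main analytic obstacle is $R_{2}'$. On $[0,t/2]$, Young's inequality with $\|\p_{x}G_{0}(\cdot,t-\tau)\|_{L^{q}}\le C(t-\tau)^{-\frac{1}{2}(1-\frac{1}{q})-\frac{1}{2}}\le Ct^{-\frac{1}{2}(1-\frac{1}{q})-\frac{1}{2}}$ and the $L^{1}$-estimate of Lemma~\ref{lem.asymp-nonlinear-2} give
\begin{equation*}
\bigl\|R_{2}'(\cdot,t)\bigr\|_{L^{q}}\le Ct^{-\frac{1}{2}(1-\frac{1}{q})-\frac{1}{2}}\int_{0}^{t/2}\eta(\tau)\tau^{-\frac{p-1}{2}}\,d\tau.
\end{equation*}
Here the weight $\tau^{-(p-1)/2}$ matches the decay rate of $\|(|MG_{0}|^{p-1}MG_{0})(\cdot,\tau)\|_{L^{1}}$ itself, so replacing $\eta$ by $\|\eta\|_{L^{\infty}}$ recovers only the target rate, not $o(\cdot)$; the gain has to come from $\eta(\tau)\to0$, via the standard $\varepsilon$-argument (cf.\ Lemma~5.3 of \cite{K99}). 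Given $\varepsilon>0$, choose $T_{0}$ with $\eta(\tau)<\varepsilon$ for $\tau>T_{0}$; since $p<3$, $\tau^{-(p-1)/2}$ is integrable near $0$, so $\int_{0}^{T_{0}}\eta(\tau)\tau^{-(p-1)/2}\,d\tau\le C(T_{0})<\infty$, while $\int_{T_{0}}^{t/2}\eta(\tau)\tau^{-(p-1)/2}\,d\tau\le\varepsilon\int_{0}^{t/2}\tau^{-(p-1)/2}\,d\tau\le C\varepsilon\,t^{(3-p)/2}$. Since $t^{\frac{1}{2}(1-\frac{1}{q})+\frac{p-2}{2}}\cdot Ct^{-\frac{1}{2}(1-\frac{1}{q})-\frac{1}{2}}=Ct^{(p-3)/2}$, it follows that $t^{\frac{1}{2}(1-\frac{1}{q})+\frac{p-2}{2}}\|R_{2}'(\cdot,t)\|_{L^{q}}\le C(T_{0})t^{(p-3)/2}+C\varepsilon$, whose $\limsup$ as $t\to\infty$ is at most $C\varepsilon$; as $\varepsilon>0$ is arbitrary, this limit is $0$. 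Adding the contributions of $R_{1},R_{2}',R_{2}''$ establishes \eqref{nl-2<p<3}.
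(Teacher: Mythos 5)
Your proposal is correct and follows essentially the same route as the paper: the same exact identity reducing $\int_{0}^{t}\p_{x}G_{0}(t-\tau)*(|MG_{0}|^{p-1}MG_{0})(\tau)d\tau$ to $(|M|^{p-1}M)W_{p}$ via translation and parabolic scaling, the same decomposition into the $(T-G_{0})$ piece (handled by Proposition~\ref{prop.asymp-Duhamel-pre}) and the $G_{0}*(|u|^{p-1}u-|MG_{0}|^{p-1}MG_{0})$ piece (handled by Lemmas~\ref{lem.asymp-nonlinear-1} and~\ref{lem.asymp-nonlinear-2}). The only cosmetic difference is that for the latter piece the paper substitutes $\tau=st$ and invokes dominated convergence for $2\le q<\infty$ (splitting at $t/2$ only for $q=\infty$), whereas you split at $t/2$ uniformly in $q$ and run an explicit $\varepsilon$--$T_{0}$ argument; both are valid.
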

\begin{proof}
First of all, for simplicity, we set 
\begin{equation}\label{K-DEF}
K(x, t):=\int_{0}^{t}\p_{x}G_{0}(t-\tau)*\left(|u|^{p-1}u-|MG_{0}|^{p-1}MG_{0}\right)(\tau)d\tau. 
\end{equation}
In what follows, we shall prove 
\begin{equation}\label{K-limit}
\lim_{t\to \infty}t^{\frac{1}{2}\left(1-\frac{1}{q}\right)+\frac{p-2}{2}}\left\|K(\cdot, t)\right\|_{L^{q}}=0, \ \ 2\le q\le \infty. 
\end{equation}
We start with the evaluation for the $L^{q}$-norm of $K(x, t)$ in the case of $2\le q<\infty$. 
From Young's inequality, Lemmas~\ref{lem.heat-decay} and \ref{lem.asymp-nonlinear-2} and the change of valuable, we obtain 
\begin{align*}
\left\|K(\cdot, t)\right\|_{L^{q}} 
&\le \int_{0}^{t}\left\|\p_{x}G_{0}(\cdot, t-\tau)\right\|_{L^{q}}\left\|\left(|u|^{p-1}u\right)(\cdot, \tau)-\left(|MG_{0}|^{p-1}MG_{0}\right)(\cdot, \tau)\right\|_{L^{1}}d\tau \nonumber \\
&\le C\int_{0}^{t}(t-\tau)^{-\frac{1}{2}\left(1-\frac{1}{q}\right)-\frac{1}{2}}\tau^{-\frac{p-1}{2}}\eta(\tau)d\tau \\
&=Ct^{-\frac{1}{2}\left(1-\frac{1}{q}\right)-\frac{p-2}{2}}\int_{0}^{1}(1-s)^{-\frac{1}{2}\left(1-\frac{1}{q}\right)-\frac{1}{2}}s^{-\frac{p-1}{2}}\eta(st)ds \quad (\tau=st).
\end{align*}
Therefore, from Lebesgue's dominated convergence theorem, we get 
\begin{align}
&\limsup_{t\to \infty}t^{\frac{1}{2}\left(1-\frac{1}{q}\right)+\frac{p-2}{2}}\left\|K(\cdot, t)\right\|_{L^{q}}  \nonumber \\
&\le C\lim_{t\to \infty}\int_{0}^{1}(1-s)^{-\frac{1}{2}\left(1-\frac{1}{q}\right)-\frac{1}{2}}s^{-\frac{p-1}{2}}\eta(st)ds=0, \ \ 2\le q<\infty.  \label{K-limit-Lq}
\end{align}
To handle the $L^{\infty}$-norm, splitting the $\tau$-integral and using Young's inequality, Lemmas~\ref{lem.heat-decay}, \ref{lem.asymp-nonlinear-2} and \ref{lem.asymp-nonlinear-1} and the change of valuable, we have 
\begin{align*}
\left\|K(\cdot, t)\right\|_{L^{\infty}} 
&\le \int_{0}^{t/2}\left\|\p_{x}G_{0}(\cdot, t-\tau)\right\|_{L^{\infty}}\left\|\left(|u|^{p-1}u\right)(\cdot, \tau)-\left(|MG_{0}|^{p-1}MG_{0}\right)(\cdot, \tau)\right\|_{L^{1}}d\tau \nonumber \\
&\ \ \ +\int_{t/2}^{t}\left\|\p_{x}G_{0}(\cdot, t-\tau)\right\|_{L^{1}}\left\|\left(|u|^{p-1}u\right)(\cdot, \tau)-\left(|MG_{0}|^{p-1}MG_{0}\right)(\cdot, \tau)\right\|_{L^{\infty}}d\tau \nonumber \\
&\le C\int_{0}^{t/2}(t-\tau)^{-1}\tau^{-\frac{p-1}{2}}\eta(\tau)d\tau 
+CE_{1}\int_{t/2}^{t}(t-\tau)^{-\frac{1}{2}}\tau^{-(p-1)}d\tau \\
&= Ct^{-\frac{p-1}{2}}\int_{0}^{1/2}(1-s)^{-1}s^{-\frac{p-1}{2}}\eta(st)ds
+CE_{1}\int_{t/2}^{t}(t-\tau)^{-\frac{1}{2}}\tau^{-(p-1)}d\tau \\
&\le Ct^{-\frac{p-1}{2}}\int_{0}^{1/2}s^{-\frac{p-1}{2}}\eta(st)ds
+CE_{1}t^{-p+\frac{3}{2}}, \ \ t\ge2.
\end{align*}
Thus, we can get the following result similarly as \eqref{K-limit-Lq}: 
\begin{align}
\limsup_{t\to \infty}t^{\frac{p-1}{2}}\left\|K(\cdot, t)\right\|_{L^{\infty}}
\le C\lim_{t\to \infty}\int_{0}^{1/2}s^{-\frac{p-1}{2}}\eta(st)ds+CE_{1}\lim_{t\to \infty}t^{-\frac{p-2}{2}}=0.  \label{K-limit-Linf}
\end{align}
Combining \eqref{K-limit-Lq} and \eqref{K-limit-Linf}, we can say that \eqref{K-limit} is true. 

Next, let us derive $W_{p}(x, t)$ defined by \eqref{asymp-func}. To simplify the calculation, we set 
\begin{equation}\label{alpha-DEF}
\alpha:=\frac{2B}{b}. 
\end{equation}
Recalling the definitions of $G_{0}(x, t)$ and $G(x, t)$ (i.e. \eqref{asymp-func} and \eqref{kernel}, respectively) and using the change of valuable several times, 
we are able to see that  
\begin{align}
\int_{0}^{t}G_{0}(t-\tau)*G_{0}^{p}(\tau)d\tau
&=\int_{0}^{t}\int_{\R}G_{0}(x-y, t-\tau)G_{0}^{p}(y, \tau)dyd\tau \nonumber\\
&=\int_{0}^{t}\int_{\R}G\left(x-y-\alpha(t-\tau), t-\tau\right)G^{p}(y-\alpha\tau, \tau)dyd\tau \nonumber\\
&=\int_{0}^{t}\int_{\R}G\left(x-\alpha t-z, t-\tau\right)G^{p}(z, \tau)dzd\tau \quad (z=y-\alpha \tau) \nonumber\\
&=t\int_{0}^{1}\int_{\R}G\left(x-\alpha t-z, t(1-s)\right)G^{p}(z, ts)dzds \quad (\tau=ts) \nonumber\\
&=t^{\frac{3}{2}}\int_{0}^{1}\int_{\R}G\left(x-\alpha t-\sqrt{t}v, t(1-s)\right)G^{p}\left(\sqrt{t}v, ts\right)dvds \quad (z=\sqrt{t}v) \nonumber\\
&=t^{\frac{3}{2}}\int_{0}^{1}\int_{\R}t^{-\frac{1}{2}}G\left(\frac{x-\alpha t}{\sqrt{t}}-v, 1-s\right)t^{-\frac{p}{2}}G^{p}(v, s)dvds \nonumber\\
&=t^{-\frac{p-2}{2}}\int_{0}^{1}\left(G(1-s)*G^{p}(s)\right)\left(\frac{x-\alpha t}{\sqrt{t}}\right)ds. \label{derive-W-pre}
\end{align}
Therefore, from \eqref{derive-W-pre}, \eqref{alpha-DEF} and \eqref{asymp-func}, we can derive $W_{p}(x, t)$ as follows: 
\begin{align}
&\int_{0}^{t}\p_{x}G_{0}(t-\tau)*\left(|MG_{0}|^{p-1}MG_{0}\right)(\tau)d\tau \nonumber\\
&=\left(|M|^{p-1}M\right)t^{-\frac{p-2}{2}}\p_{x}\left(\int_{0}^{1}\left(G(1-s)*G^{p}(s)\right)\left(\frac{x-\alpha t}{\sqrt{t}}\right)ds\right) \nonumber\\
&=\left(|M|^{p-1}M\right)t^{-\frac{p-1}{2}}\frac{d}{dx}\left(\int_{0}^{1}\left(G(1-s)*G^{p}(s)\right)\left(x\right)ds\right)\biggl|_{x=\frac{x-\alpha t}{\sqrt{t}}} \nonumber\\
&=\left(|M|^{p-1}M\right)t^{-\frac{p-1}{2}}w_{p}\left(\frac{x-\alpha t}{\sqrt{t}}\right)=\left(|M|^{p-1}M\right)W_{p}(x, t). \label{derive-W}
\end{align}

Finally, we shall prove \eqref{nl-2<p<3}. From, \eqref{derive-W}, we note that the following relation holds: 
\begin{align}
&\int_{0}^{t}\p_{x}T(t-\tau)*\left(|u|^{p-1}u\right)(\tau)d\tau-\left(|M|^{p-1}M\right)W_{p}(x, t) \nonumber \\
&=\int_{0}^{t}\p_{x}T(t-\tau)*\left(|u|^{p-1}u\right)(\tau)d\tau-\int_{0}^{t}\p_{x}G_{0}(t-\tau)*\left(|MG_{0}|^{p-1}MG_{0}\right)(\tau)d\tau \nonumber \\
&=\int_{0}^{t}\p_{x}\left(T-G_{0}\right)(t-\tau)*\left(|u|^{p-1}u\right)(\tau)d\tau \nonumber \\
&\ \ \ +\int_{0}^{t}\p_{x}G_{0}(t-\tau)*\left(|u|^{p-1}u-|MG_{0}|^{p-1}MG_{0}\right)(\tau)d\tau. \label{nl-2<p<3-split}
\end{align}
Therefore, by virtue of \eqref{nl-2<p<3-split}, Proposition~\ref{prop.asymp-Duhamel-pre}, \eqref{K-DEF} and \eqref{K-limit}, 
we can conclude that \eqref{nl-2<p<3} is true. 
\end{proof}

\begin{proof}[\rm{\bf{End of the Proof of Theorem~\ref{thm.main}~for~$\bm{2<p<3}$}}]
We note that the following relation holds: 
\begin{align}
&u(x, t)-MG_{0}(x, t)+\left(|M|^{p-1}M\right)W_{p}(x, t) \nonumber \\
&=\left\{T(t)*u_{0}-G_{0}(t)*u_{0}\right\} +\left\{G_{0}(t)*u_{0}-MG_{0}(x, t)\right\} \nonumber \\
&\ \ \ \,-\left\{\int_{0}^{t}\p_{x}T(t-\tau)*\left(|u|^{p-1}u\right)(\tau)d\tau-\left(|M|^{p-1}M\right)W_{p}(x, t)\right\}.  \label{sol-split-2<p<3}
\end{align}
Therefore, from \eqref{sol-split-2<p<3}, Young's inequality, Lemma~\ref{lem.T-G-linear}, Propositions~\ref{prop.asymp-heat} and \ref{prop.nl-2<p<3}, 
we can conclude that the asymptotic formula \eqref{u-asymp-2<p<3} is true. This completes the proof of Theorem~\ref{thm.main} for $2<p<3$. 
\end{proof}

\subsection{Proof of Theorem~\ref{thm.main} for $\bm{p=3}$}

Next, in this subsection, let us treat the case of $p=3$, i.e. we shall prove \eqref{u-asymp-p=3}. 
In order to prove it, we derive the following key asymptotic formula. 
The methods used in the proof of the proposition below are based on the techniques used for Proposition~4.3 in \cite{F19, K07}. 
\begin{prop}
\label{prop.nl-p=3}
Let $p=3$. Assume that $u_{0} \in H^{1}(\R) \cap L^{1}(\R)$, $xu_{0}\in L^{1}(\R)$ and $E_{0}=\|u_{0}\|_{H^{1}}+\|u_{0}\|_{L^{1}}$ is sufficiently small. 
Then, the solution $u(x, t)$ to \eqref{VFW} satisfies 
\begin{equation}\label{nl-p=3}
\left\|\int_{0}^{t}\p_{x}T(t-\tau)*u^{3}(\tau)d\tau-\frac{M^{3}}{4\sqrt{3}\pi \mu}\left(\log t\right)\p_{x}G_{0}(\cdot, t)\right\|_{L^{q}}
\le CE_{1}t^{-\frac{1}{2}\left(1-\frac{1}{q}\right)-\frac{1}{2}}, \ \ t\ge2,
\end{equation}
for any $2\le q \le \infty$, where $T(x, t)$, $G_{0}(x, t)$ and $M$ are defined by \eqref{T-linear}, \eqref{asymp-func} and \eqref{M-m-notation}, respectively. 
Also, the constant $E_{1}$ is defined by $E_{1}=E_{0}+\|xu_{0}\|_{L^{1}}$. 
\end{prop}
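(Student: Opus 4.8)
The plan is to reduce the Duhamel term for $p=3$ to the model integral $\int_0^t \p_x G_0(t-\tau)*(|MG_0|^{2}MG_0)(\tau)\,d\tau$ using Proposition~\ref{prop.asymp-Duhamel-pre} and Lemma~\ref{lem.asymp-nonlinear-1}, then to extract the logarithmic profile from this model integral, which is the genuinely new computation. First I would write, as in \eqref{nl-2<p<3-split},
\begin{align*}
&\int_0^t \p_x T(t-\tau)*u^3(\tau)\,d\tau - \int_0^t \p_x G_0(t-\tau)*(M^3 G_0^3)(\tau)\,d\tau\\
&\quad = \int_0^t \p_x(T-G_0)(t-\tau)*u^3(\tau)\,d\tau + \int_0^t \p_x G_0(t-\tau)*(u^3 - M^3 G_0^3)(\tau)\,d\tau.
\end{align*}
The first term on the right is $o(t^{-\frac12(1-\frac1q)-\frac12})$ by Proposition~\ref{prop.asymp-Duhamel-pre}; the second is $K(x,t)$ from \eqref{K-DEF}, which I would estimate via Young's inequality, Lemma~\ref{lem.heat-decay} and the $p=3$ case of Lemma~\ref{lem.asymp-nonlinear-1}, splitting $\int_0^t = \int_0^{t/2}+\int_{t/2}^t$ and being careful about the $\log(2+\tau)$ factor near $\tau=t$; the outcome should be $\|K(\cdot,t)\|_{L^q}\le CE_1 t^{-\frac12(1-\frac1q)-\frac12}$ for $t\ge2$, which is exactly the claimed bound (no extra log).

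The heart of the proof is then to show
\[
\Big\| \int_0^t \p_x G_0(t-\tau)*(M^3 G_0^3)(\tau)\,d\tau - \frac{M^3}{4\sqrt3\,\pi\mu}(\log t)\,\p_x G_0(\cdot,t)\Big\|_{L^q} \le C t^{-\frac12(1-\frac1q)-\frac12},\quad t\ge2.
\]
As in \eqref{derive-W-pre}, after the shift $x\mapsto x-\alpha t$ and the scaling $\tau=ts$, $z=\sqrt{t}\,v$ the $p=3$ integral becomes $t^{-1/2}\int_0^1 \big(G(1-s)*G^3(s)\big)\big(\tfrac{x-\alpha t}{\sqrt t}\big)\,ds$ — but here the integrand $\int_0^1 (G(1-s)*G^3(s))\,ds$ diverges logarithmically: near $s=0$, $G^3(\cdot,s)$ has total mass $\sim (4\pi\mu s)^{-1}\cdot$(const) concentrating at the origin, so $G(1-s)*G^3(s)(y)\approx \|G^3(s)\|_{L^1}\,G(y,1)$ with $\|G^3(\cdot,s)\|_{L^1}=\frac{1}{4\sqrt3\,\pi\mu}\,s^{-1}$. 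Rather than rescaling first, I would instead split the $\tau$-integral at, say, $\tau=1$ and $\tau=t/2$, write $M^3 G_0^3(\tau) = \|M^3 G_0^3(\cdot,\tau)\|_{L^1}\,\delta_0\,(\text{shifted by }\alpha\tau) + \text{error}$ in a suitable sense, and use that $\|M^3 G_0^3(\cdot,\tau)\|_{L^1} = \frac{M^3}{4\sqrt3\,\pi\mu}\,\tau^{-1}$ for the main part; then $\int_1^{t/2}\p_x G_0(t-\tau)*(\cdot)\,d\tau \approx \frac{M^3}{4\sqrt3\,\pi\mu}\big(\int_1^{t/2}\tau^{-1}d\tau\big)\p_x G_0(t) = \frac{M^3}{4\sqrt3\,\pi\mu}(\log\frac t2)\p_x G_0(t)$, and $\log\frac t2 = \log t + O(1)$ contributes the stated profile with an error of size $t^{-\frac12(1-\frac1q)-\frac12}$ (since $\|\p_x G_0(t)\|_{L^q}\sim t^{-\frac12(1-\frac1q)-\frac12}$). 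The remaining pieces $\int_0^1$, $\int_{t/2}^t$, and the difference between $\p_x G_0(t-\tau)*G_0^3(\tau)$ and $\|G_0^3(\tau)\|_{L^1}\p_x G_0(t-\tau)$ shifted appropriately, together with replacing $\p_x G_0(t-\tau)$ by $\p_x G_0(t)$, must all be shown to be $O(t^{-\frac12(1-\frac1q)-\frac12})$; the key technical inputs are Proposition~\ref{prop.asymp-heat} (first moment of a mass distribution) applied to the measure $G_0^3(\cdot,\tau)\,d\tau$ and Lemma~\ref{lem.heat-decay} for the time-derivative bound $\|\p_\tau \p_x G_0(t-\tau)\|_{L^q}\lesssim (t-\tau)^{-\frac12(1-\frac1q)-\frac32}$.

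The main obstacle is the careful bookkeeping of the three error contributions near $\tau=0$, near $\tau=t$, and from the "$\delta$-approximation" of $G_0^3(\tau)$ over $1\le\tau\le t/2$: one must confirm each is genuinely $O(t^{-\frac12(1-\frac1q)-\frac12})$ with no residual logarithm. In particular the $\tau\in[t/2,t]$ piece, where $G_0^3(\tau)$ is smooth and spread out on scale $\sqrt t$ while $\p_x G_0(t-\tau)$ is singular, should be handled with the $L^1$ bound on $\p_x G_0(t-\tau)$ and $\|u^3(\tau)-M^3G_0^3(\tau)\|_{L^\infty}$-type estimates already packaged in Lemma~\ref{lem.asymp-nonlinear-1}; the $\tau\in[0,1]$ piece is harmless because $G_0^3(\tau)$ has integrable-in-$\tau$ singularity only up to a $\log$, and multiplying by the bounded factor $\|\p_x G_0(t-\tau)\|$ for $t\ge2$ gives an $O(1)\cdot\|\p_x G_0(t)\|$ contribution that is absorbed. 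Once these are controlled, combining with the first two displayed reductions via the triangle inequality and the decay rates from Lemma~\ref{lem.T-G-linear} yields \eqref{nl-p=3}.
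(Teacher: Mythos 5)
Your plan for extracting the logarithm from the middle range of the time integral is essentially the paper's: approximate $M^{3}G_{0}^{3}(\cdot,\tau)$ by a point mass of size $\frac{M^{3}}{4\sqrt{3}\pi\mu}\tau^{-1}$ located at $x=\alpha\tau$ (the paper implements this by integration by parts against the primitive of $G^{3}$, controlling the error by the first moment $\int_{0}^{\infty}\eta G^{3}(\eta,\tau)d\eta\le C\tau^{-1/2}$), replace $\p_{x}G_{0}(\cdot,t-\tau)$ by $\p_{x}G_{0}(\cdot,t)$ via the mean value theorem in $t$, and use $\int_{1}^{t/2}\tau^{-1}d\tau=\log t-\log 2$. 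That part is sound.

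However, your reduction step contains a genuine gap: you subtract the model integral $\int_{0}^{t}\p_{x}G_{0}(t-\tau)*(M^{3}G_{0}^{3})(\tau)\,d\tau$ over the \emph{full} interval $[0,t]$, and this integral does not converge. Since $\left\|G_{0}^{3}(\cdot,\tau)\right\|_{L^{1}}=\frac{\tau^{-1}}{4\sqrt{3}\pi\mu}$, the integrand behaves like $\tau^{-1}\p_{x}G_{0}(x,t-\tau)$ as $\tau\to 0^{+}$, which is non-integrable at $\tau=0$ (this is the same $s^{-1}$ divergence you correctly observe in the rescaled form, but it lives at the \emph{lower} endpoint of the $\tau$-integral, not only as an obstruction to defining $w_{3}$). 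Consequently the first displayed identity in your proposal adds and subtracts a divergent quantity, and the claimed bound $\|K(\cdot,t)\|_{L^{q}}\le CE_{1}t^{-\frac{1}{2}(1-\frac{1}{q})-\frac{1}{2}}$ for $K$ taken over $[0,t]$ cannot be proved: Lemma~\ref{lem.asymp-nonlinear-1} is only valid for $\tau\ge 1$, and on $(0,1)$ Lemma~\ref{lem.asymp-nonlinear-2} gives only $\left\|u^{3}-M^{3}G_{0}^{3}\right\|_{L^{1}}\le \tau^{-1}\eta(\tau)$ with $\eta$ merely bounded, so $\int_{0}^{1}\tau^{-1}\eta(\tau)\,d\tau$ may diverge (and the $G_{0}^{3}$ contribution genuinely does). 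The cure is the one the paper uses: split the Duhamel integral as $\int_{0}^{1}+\int_{1}^{t/2}+\int_{t/2}^{t}$ \emph{before} introducing $G_{0}^{3}$, keep $u^{3}$ on $[0,1]$ and $[t/2,t]$ (where $\left\|u^{3}(\cdot,\tau)\right\|_{L^{1}}\le CE_{0}(1+\tau)^{-1}$ is integrable near $\tau=0$ and the $N_{2}$-type estimate applies near $\tau=t$), and compare with $(MG_{0})^{3}$ only on $[1,t/2]$; the lost contribution to the coefficient of the profile is then absorbed because $\int_{1}^{t/2}\tau^{-1}d\tau$ still produces $\log t+O(1)$. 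With that repair the rest of your argument goes through.
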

\begin{proof}
First, we split the integral in the Duhamel term of \eqref{integral-eq} as follows: 
\begin{align}
&\int_{0}^{t}\p_{x}T(t-\tau)*u^{3}(\tau)d\tau
=\left(\int_{0}^{1}+\int_{1}^{t/2}+\int_{t/2}^{t}\right)\p_{x}T(t-\tau)*u^{3}(\tau)d\tau \nonumber \\
&=\int_{0}^{1}\p_{x}T(t-\tau)*u^{3}(\tau)d\tau+\int_{t/2}^{t}\p_{x}T(t-\tau)*u^{3}(\tau)d\tau  \nonumber \\
&\ \ \ \, +\int_{1}^{t/2}\p_{x}\left(T-G_{0}\right)(t-\tau)*u^{3}(\tau)d\tau+\int_{1}^{t/2}\p_{x}G_{0}(t-\tau)*\left(u^{3}-\left(MG_{0}\right)^{3}\right)(\tau)d\tau  \nonumber \\
&\ \ \ \, +M^{3}\int_{1}^{t/2}\p_{x}G_{0}(t-\tau)*G_{0}^{3}(\tau)d\tau  \nonumber \\
&=:R_{1}(x, t)+R_{2}(x, t)+R_{3}(x, t)+R_{4}(x, t)+L(x, t). \label{int-p=3}
\end{align}

In what follows, let us evaluate $R_{i}(x, t)$ for all $i=1, 2, 3, 4$. We start with evaluation for $R_{1}(x, t)$. 
Modifying the way to get \eqref{N1-est} and using Young's inequality, \eqref{T-basic} and \eqref{u^p-decay-Lq}, we have 
\begin{align}
\left\|R_{1}(\cdot, t)\right\|_{L^{q}} 
&\le \int_{0}^{1}\left\|\p_{x}T(\cdot, t-\tau)\right\|_{L^{q}}\left\|u^{3}(\cdot, \tau)\right\|_{L^{1}}d\tau 
\le CE_{0}\int_{0}^{1}(t-\tau)^{-\frac{1}{2}\left(1-\frac{1}{q}\right)-\frac{1}{2}}(1+\tau)^{-1}d\tau \nonumber \\
&\le CE_{0}(t-1)^{-\frac{1}{2}\left(1-\frac{1}{q}\right)-\frac{1}{2}}\int_{0}^{1}(1+\tau)^{-1}d\tau
\le CE_{0}t^{-\frac{1}{2}\left(1-\frac{1}{q}\right)-\frac{1}{2}}, \ \ t\ge2, \ 2\le q\le \infty.  \label{R1-est}
\end{align}
For $R_{2}(x, t)$, we can use the estimate \eqref{N2-est} because $R_{2}(x, t)\equiv N_{2}(x, t)$. Therefore, we obtain 
\begin{equation}
\left\|R_{2}(\cdot, t)\right\|_{L^{q}} \le CE_{0}t^{-\frac{1}{2}\left(1-\frac{1}{q}\right)-\frac{1}{2}}, \ \ t>0, \ 2\le q\le \infty. \label{R2-est}
\end{equation}
Next, we deal with $R_{3}(x, t)$. In a similar way to get \eqref{D1-est} for $p=3$, by using Young's inequality, \eqref{T-G-linear} and \eqref{u^p-decay-Lq}, we are able to see that 
\begin{align}
\left\|R_{3}(\cdot, t)\right\|_{L^{q}} 
&\le \int_{1}^{t/2}\left\|\p_{x}(T-G_{0})(\cdot, t-\tau)\right\|_{L^{q}}\left\|u^{3}(\cdot, \tau)\right\|_{L^{1}}d\tau \nonumber \\
&\le CE_{0}\int_{1}^{t/2}(t-\tau)^{-\frac{1}{2}\left(1-\frac{1}{q}\right)-1}(1+\tau)^{-1}d\tau \nonumber \\
&\le CE_{0}t^{-\frac{1}{2}\left(1-\frac{1}{q}\right)-1}\log(1+t), \ \ t\ge2, \ 2\le q\le \infty.  \label{R3-est}
\end{align}
Finally, let us treat $R_{4}(x, t)$, it follows from Young's inequality, Lemma~\ref{lem.heat-decay} and \eqref{decay-part} for $p=3$ that 
\begin{align}
\left\|R_{4}(\cdot, t)\right\|_{L^{q}} 
&\le \int_{1}^{t/2}\left\|\p_{x}G_{0}(\cdot, t-\tau)\right\|_{L^{q}}\left\|\left(u^{3}-(MG_{0})^{3}\right)(\cdot, \tau)\right\|_{L^{1}}d\tau \nonumber \\
&\le CE_{1}\int_{1}^{t/2}(t-\tau)^{-\frac{1}{2}\left(1-\frac{1}{q}\right)-\frac{1}{2}}\tau^{-\frac{3}{2}}\log(2+\tau)d\tau \nonumber \\
&\le CE_{1}t^{-\frac{1}{2}\left(1-\frac{1}{q}\right)-\frac{1}{2}}, \ \ t\ge2, \ 1\le q\le \infty.  \label{R4-est}
\end{align}

Next, we shall transform $L(x, t)$ in \eqref{int-p=3} and derive the leading term of it. Here, we recall the definitions of $G_{0}(x, t)$, $G(x, t)$ and $\alpha$, i.e. \eqref{asymp-func}, \eqref{kernel} and \eqref{alpha-DEF}, respectively. Then, by the change of valuable and the integration by parts, we have
\begin{align}
L(x, t)&=M^{3}\int_{1}^{t/2}\int_{\R}\p_{x}G\left(x-y-\alpha(t-\tau), t-\tau\right)G^{3}(y-\alpha \tau, \tau)dyd\tau \nonumber\\
&=M^{3}\int_{1}^{t/2}\int_{\R}\p_{x}G(x-\alpha t-z, t-\tau)G^{3}(z, \tau)dzd\tau \quad \left(z=y-\alpha \tau \right) \nonumber\\
&=M^{3}\int_{1}^{t/2}\p_{x}G(x-\alpha t, t-\tau)\int_{\R}G^{3}(\eta, \tau)d\eta d\tau \nonumber\\
&\ \ \ -M^{3}\int_{1}^{t/2}\int_{0}^{\infty}\p_{x}^{2}G(x-\alpha t-z, t-\tau)\int_{z}^{\infty}G^{3}(\eta, \tau)d\eta dzd\tau \nonumber\\
&\ \ \ +M^{3}\int_{1}^{t/2}\int_{-\infty}^{0}\p_{x}^{2}G(x-\alpha t-z, t-\tau)\int_{-\infty}^{z}G^{3}(\eta, \tau)d\eta dzd\tau \nonumber\\
&=:L_{0}(x, t)+L_{1}(x, t)+L_{2}(x, t). \label{L-split}
\end{align}
Now, let us evaluate $L_{1}(x, t)$ and $L_{2}(x, t)$. For the heat kernel $G(x, t)$ defined by \eqref{kernel}, we recall the following well known estimate (for the proof, see e.g. \cite{GG99}):
\begin{equation}\label{est.Gauss}
\left\|\p_{t}^{k}\p_{x}^{l}G(\cdot, t)\right\|_{L^{q}}\le Ct^{-\frac{1}{2}\left(1-\frac{1}{q}\right)-\frac{l}{2}-k}, \ \ t>0, \ 1\le q\le \infty. 
\end{equation}
Therefore, it follows from \eqref{est.Gauss} that 
\begin{align}
\left\|L_{1}(\cdot, t)\right\|_{L^{q}} 
&\le |M|^{3}\int_{1}^{t/2}\int_{0}^{\infty}\left\|\p_{x}^{2}G(\cdot-\alpha t-z, t-\tau)\right\|_{L^{q}}\int_{z}^{\infty}G^{3}(\eta, \tau)d\eta dzd\tau \nonumber\\
&\le C|M|^{3}t^{-\frac{1}{2}\left(1-\frac{1}{q}\right)-1}\int_{1}^{t/2}\int_{0}^{\infty}\int_{z}^{\infty}G^{3}(\eta, \tau)d\eta dzd\tau \nonumber\\
&=C|M|^{3}t^{-\frac{1}{2}\left(1-\frac{1}{q}\right)-1}\int_{1}^{t/2}\int_{0}^{\infty}\int_{0}^{\eta}G^{3}(\eta, \tau)dzd\eta d\tau \nonumber\\
&\le C|M|^{3}t^{-\frac{1}{2}\left(1-\frac{1}{q}\right)-1}\int_{1}^{t/2}\int_{0}^{\infty}\eta G^{3}(\eta, \tau) d\eta d\tau \nonumber\\
&\le C|M|^{3}t^{-\frac{1}{2}\left(1-\frac{1}{q}\right)-1}\int_{1}^{t/2}\tau^{-\frac{1}{2}} d\tau  \nonumber\\
&\le C|M|^{3}t^{-\frac{1}{2}\left(1-\frac{1}{q}\right)-\frac{1}{2}}, \ \ t\ge2, \ 1\le q\le \infty,  \label{L1-est}
\end{align}
where we have used the following fact: 
\[
\int_{0}^{\infty}\eta G^{3}(\eta, \tau) d\eta
=\int_{0}^{\infty}\frac{\eta}{(4 \pi \mu \tau)^{\frac{3}{2}}}\exp\left(-\frac{3\eta^{2}}{4\mu\tau}\right)d\eta
\le C\tau^{-\frac{1}{2}}. 
\]
Analogously, we can obtain the same estimate for $L_{2}(x, t)$ as follows: 
\begin{equation}\label{L2-est}
\left\|L_{2}(\cdot, t)\right\|_{L^{q}} 
\le C |M|^{3}t^{-\frac{1}{2}\left(1-\frac{1}{q}\right)-\frac{1}{2}}, \ \ t\ge2, \ 1\le q\le \infty. 
\end{equation}
Finally, we would like to treat $L_{0}(x, t)$. First, we note that
\[
\int_{\R}G^{3}(\eta, \tau)d\eta=\int_{\R}\frac{1}{(4 \pi \mu \tau)^{\frac{3}{2}}}\exp\left(-\frac{3\eta^{2}}{4\mu\tau}\right)d\eta=\frac{\tau^{-1}}{4\sqrt{3}\pi \mu}.
\]
Therefore, we can see that 
\begin{align}
L_{0}(x, t)
&=M^{3}\int_{1}^{t/2}\p_{x}G(x-\alpha t, t-\tau)\int_{\R}G^{3}(\eta, \tau)d\eta d\tau
=\frac{M^{3}}{4\sqrt{3}\pi \mu}\int_{1}^{t/2}\p_{x}G(x-\alpha t, t-\tau)\tau^{-1}d\tau \nonumber\\
&=\frac{M^{3}}{4\sqrt{3}\pi \mu}\int_{1}^{t/2}\p_{x}\left(G(x-\alpha t, t-\tau)-G(x-\alpha t, t)\right)\tau^{-1}d\tau
+\frac{M^{3}}{4\sqrt{3}\pi \mu}\p_{x}G(x-\alpha t, t)\log\frac{t}{2} \nonumber\\
&=\frac{M^{3}}{4\sqrt{3}\pi \mu}\int_{1}^{t/2}\p_{x}\left(G(x-\alpha t, t-\tau)-G(x-\alpha t, t)\right)\tau^{-1}d\tau \nonumber \\
&\ \ \ \,-\frac{M^{3}\log2}{4\sqrt{3}\pi \mu}\p_{x}G_{0}(x, t)
+\frac{M^{3}}{4\sqrt{3}\pi \mu}\left(\log t\right)\p_{x}G_{0}(x, t) \nonumber \\
&=:L_{0.1}(x, t)+L_{0.2}(x, t)+\frac{M^{3}}{4\sqrt{3}\pi \mu}\left(\log t\right)\p_{x}G_{0}(x, t). 
\label{L0-split}
\end{align}
To evaluate $L_{0.1}(x, t)$, we use the following fact: 
\begin{equation}\label{t-mean}
\left\|\p_{x}\left(G(\cdot-\alpha t, t-\tau)-G(\cdot-\alpha t, t)\right)\right\|_{L^{q}}\le C\tau(t-\tau)^{-\frac{1}{2}\left(1-\frac{1}{q}\right)-\frac{3}{2}}, \ \ t>\tau, \ 1\le q\le \infty,
\end{equation}
which comes from the mean value theorem
\[
\p_{x}G(x-\alpha t, t-\tau)-\p_{x}G(x-\alpha t, t)=-\tau\int_{0}^{1}\left(\p_{t}\p_{x}G\right)(x-\alpha t, t-\theta \tau)d\theta
\]
and \eqref{est.Gauss}. Therefore, it follows from \eqref{t-mean} that 
\begin{align}
\left\|L_{0.1}(\cdot, t)\right\|_{L^{q}}
&\le C|M|^{3}\int_{1}^{t/2}\tau(t-\tau)^{-\frac{1}{2}\left(1-\frac{1}{q}\right)-\frac{3}{2}}\tau^{-1} d\tau \nonumber \\
&\le C|M|^{3}t^{-\frac{1}{2}\left(1-\frac{1}{q}\right)-\frac{1}{2}}, \ \ t\ge2, \ 1\le q\le \infty. \label{L0.1-est}
\end{align}
On the other hand, it directly follows from Lemma~\ref{lem.heat-decay} that 
\begin{equation}\label{L0.2-est}
\left\|L_{0.2}(\cdot, t)\right\|_{L^{q}}
\le C|M|^{3}t^{-\frac{1}{2}\left(1-\frac{1}{q}\right)-\frac{1}{2}}, \ \ t\ge2, \ 1\le q\le \infty. 
\end{equation}

Eventually, combining \eqref{int-p=3} through \eqref{L-split}, \eqref{L1-est} through \eqref{L0-split}, \eqref{L0.1-est} and \eqref{L0.2-est}, we arrive at
\begin{align*}
&\left\|\int_{0}^{t}\p_{x}T(t-\tau)*u^{3}(\tau)d\tau-\frac{M^{3}}{4\sqrt{3}\pi \mu}\left(\log t\right)\p_{x}G_{0}(\cdot, t)\right\|_{L^{q}}\\
&\le \sum_{i=1}^{4}\left\|R_{i}(\cdot, t)\right\|_{L^{q}}+\sum_{j=1}^{2}\left\|L_{j}(\cdot, t)\right\|_{L^{q}}+\sum_{k=1}^{2}\left\|L_{0.k}(\cdot, t)\right\|_{L^{q}}
\le CE_{1}t^{-\frac{1}{2}\left(1-\frac{1}{q}\right)-\frac{1}{2}}, \ \ t\ge2, \ 2\le q\le \infty. 
\end{align*}
Therefore, we obtain the desired estimate \eqref{nl-p=3}. This completes the proof. 
\end{proof}

Now, we note that the following relation holds: 
\begin{align}
&u(x, t)-MG_{0}(x, t)+\frac{M^{3}}{4\sqrt{3}\pi \mu}\left(\log t\right)\p_{x}G_{0}(x, t) \nonumber \\
&=\left\{T(t)*u_{0}-G_{0}(t)*u_{0}\right\} +\left\{G_{0}(t)*u_{0}-MG_{0}(x, t)\right\} \nonumber \\
&\ \ \ \,-\left\{\int_{0}^{t}\p_{x}T(t-\tau)*\left(|u|^{p-1}u\right)(\tau)d\tau-\frac{M^{3}}{4\sqrt{3}\pi \mu}\left(\log t\right)\p_{x}G_{0}(x, t)\right\}.  \label{sol-split-p=3}
\end{align}
Therefore, by virtue of \eqref{sol-split-p=3}, Young's inequality, Lemma~\ref{lem.T-G-linear}, Propositions~\ref{prop.asymp-heat} and \ref{prop.nl-p=3}, 
we can immediately conclude that the following asymptotic formula is true: 
\begin{thm}
\label{thm.nl-p=3}
Let $p=3$. Assume that $u_{0} \in H^{1}(\R) \cap L^{1}(\R)$, $xu_{0}\in L^{1}(\R)$ and $E_{0}=\|u_{0}\|_{H^{1}}+\|u_{0}\|_{L^{1}}$ is sufficiently small. 
Then, the solution $u(x, t)$ to \eqref{VFW} satisfies 
\begin{equation*}
\left\|u(\cdot, t)-MG_{0}(\cdot, t)+\frac{M^{3}}{4\sqrt{3}\pi \mu}\left(\log t\right)\p_{x}G_{0}(\cdot, t)\right\|_{L^{q}}
\le CE_{1}t^{-\frac{1}{2}\left(1-\frac{1}{q}\right)-\frac{1}{2}}, \ \ t\ge2,
\end{equation*}
for any $2\le q \le \infty$, where $G_{0}(x, t)$ and $M$ are defined by \eqref{asymp-func} and \eqref{M-m-notation}, respectively. 
Also, the constant $E_{1}$ is defined by $E_{1}=E_{0}+\|xu_{0}\|_{L^{1}}$. 
\end{thm}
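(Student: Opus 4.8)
The plan is to read off the estimate directly from the decomposition \eqref{sol-split-p=3}, which has already been recorded above for $p=3$. That identity writes
\[
u(\cdot, t)-MG_{0}(\cdot, t)+\frac{M^{3}}{4\sqrt{3}\pi \mu}\left(\log t\right)\p_{x}G_{0}(\cdot, t)
\]
as the sum of three pieces: the linear dispersion-versus-diffusion error $\left(T-G_{0}\right)(t)*u_{0}$; the heat-kernel first-moment error $G_{0}(t)*u_{0}-MG_{0}(\cdot, t)$; and minus the corrected Duhamel term $\int_{0}^{t}\p_{x}T(t-\tau)*u^{3}(\tau)d\tau-\frac{M^{3}}{4\sqrt{3}\pi \mu}\left(\log t\right)\p_{x}G_{0}(\cdot, t)$. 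I would then bound each of the three pieces separately and add.

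For the first piece, Young's inequality gives $\left\|\left(T-G_{0}\right)(t)*u_{0}\right\|_{L^{q}}\le \left\|T(\cdot, t)-G_{0}(\cdot, t)\right\|_{L^{q}}\left\|u_{0}\right\|_{L^{1}}$, and the decay estimate \eqref{T-G-linear} of Lemma~\ref{lem.T-G-linear} with $l=0$ controls the kernel difference by $Ct^{-\frac{1}{2}(1-\frac{1}{q})-\frac{1}{2}}$; this is exactly the bound \eqref{IE-1stT} already established inside the proof of Proposition~\ref{prop.asymp-nonlinear}. For the second piece, I would invoke \eqref{heat-first} of Proposition~\ref{prop.asymp-heat} with $l=0$, which, using $xu_{0}\in L^{1}(\R)$, gives $\left\|G_{0}(t)*u_{0}-MG_{0}(\cdot, t)\right\|_{L^{q}}\le C\|xu_{0}\|_{L^{1}}t^{-\frac{1}{2}(1-\frac{1}{q})-\frac{1}{2}}$. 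For the third piece, this is precisely the content of Proposition~\ref{prop.nl-p=3}, which bounds it by $CE_{1}t^{-\frac{1}{2}(1-\frac{1}{q})-\frac{1}{2}}$ for $t\ge2$.

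Finally I would combine the three bounds via the triangle inequality. Since all three share the common decay rate $t^{-\frac{1}{2}(1-\frac{1}{q})-\frac{1}{2}}$ and all three constants can be absorbed into $CE_{1}$ with $E_{1}=E_{0}+\|xu_{0}\|_{L^{1}}$, this yields the claimed estimate, with the lower bound $t\ge2$ inherited from Proposition~\ref{prop.nl-p=3}. There is no genuine obstacle at this stage: the analytical work — splitting the Duhamel integral over $(0,1)\cup(1,t/2)\cup(t/2,t)$, isolating the leading term $L_{0}(x,t)$, and extracting the coefficient $\frac{M^{3}}{4\sqrt{3}\pi \mu}$ of $\log t$ from the identity $\int_{\R}G^{3}(\eta, \tau)d\eta=\frac{\tau^{-1}}{4\sqrt{3}\pi \mu}$ together with the mean-value estimate \eqref{t-mean} — has already been carried out in the proof of Proposition~\ref{prop.nl-p=3}, so the theorem reduces to a bookkeeping assembly of results already in hand.
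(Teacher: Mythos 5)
Your proposal is correct and is essentially identical to the paper's own proof: the paper also reads the theorem off the decomposition \eqref{sol-split-p=3}, bounding the three pieces by Young's inequality with Lemma~\ref{lem.T-G-linear}, by \eqref{heat-first} of Proposition~\ref{prop.asymp-heat}, and by Proposition~\ref{prop.nl-p=3}, respectively. Nothing further is needed.
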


\begin{proof}[\rm{\bf{End of the Proof of Theorem~\ref{thm.main}~for~$\bm{p=3}$}}]
The desired result \eqref{u-asymp-p=3} can be easily obtained from the above Theorem~\ref{thm.nl-p=3}. 
This completes the proof of Theorem~\ref{thm.main} for $p=3$. 
\end{proof}

\subsection{Proof of Theorem~\ref{thm.main} for $\bm{p>3}$}

Finally in this subsection, we complete the proof of Theorem~\ref{thm.main} for $p>3$, i.e. we shall prove \eqref{u-asymp-p>3}. 
In order to show it, we need to analyze the linear part of the solution to \eqref{VFW} in more details. 
To doing that, let us further transform the dispersion term in \eqref{VFW}. Now, recalling \eqref{dispersion} and noticing that 
\begin{align}
\frac{2B}{b}\p_{x}u+\frac{2B}{b}(b^{2}-\p_{x}^{2})^{-1}\p_{x}^{3}u
=\frac{2B}{b}\p_{x}u+\frac{2B}{b^{3}}\p_{x}^{3}u+\frac{2B}{b^{3}}(b^{2}-\p_{x}^{2})^{-1}\p_{x}^{5}u. \label{dispersion-2}
\end{align}
Therefore, the integral kernel $T(x, t)$ is defined by \eqref{T-linear} can be rewritten by 
\begin{align}\label{T-linear-2}
\begin{split}
T(x, t)=\frac{1}{\sqrt{2\pi}}\mathcal{F}^{-1}\left[\exp\left(-\mu t\xi^{2}-\frac{i2Bt\xi}{b}+\frac{i2Bt\xi^{3}}{b^{3}}-\frac{i2Bt\xi^{5}}{b^{3}(b^{2}+\xi^{2})}\right)\right](x).
\end{split}
\end{align}
To prove \eqref{u-asymp-p>3}, we need to find the asymptotic profile of both the linear part and the Duhamel part of \eqref{integral-eq}. 
First, we shall explain about the asymptotic analysis for the linear part. 
The following proposition is a key to derive the leading term of \eqref{integral-eq}. For some related results to this formula, see e.g. \cite{FI21, K99}.
\begin{prop}
\label{prop.asymp-linear}
Let $l$ be a non-negative integer and $2\le q\le \infty$. Then, we have 
\begin{equation}\label{asymp-linear}
\left\|\p_{x}^{l}\left(T(\cdot, t)-G_{0}(\cdot, t)+\frac{2B}{b^{3}}t\p_{x}^{3}G_{0}(\cdot, t)\right)\right\|_{L^{q}}\le Ct^{-\frac{1}{2}\left(1-\frac{1}{q}\right)-1-\frac{l}{2}}\left(1+t^{-\frac{1}{2}}\right), \ \ t>0,
\end{equation}
where $T(x, t)$ and $G_{0}(x, t)$ are defined by \eqref{T-linear} and \eqref{asymp-func}, respectively. 
\end{prop}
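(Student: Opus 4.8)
\textbf{Proof proposal for Proposition~\ref{prop.asymp-linear}.}

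The plan is to work entirely on the Fourier side, using the representation \eqref{T-linear-2}. Set
\[
\phi(\xi, t):=-\mu t\xi^{2}-\frac{i2Bt\xi}{b}+\frac{i2Bt\xi^{3}}{b^{3}}-\frac{i2Bt\xi^{5}}{b^{3}(b^{2}+\xi^{2})},
\qquad
\psi(\xi, t):=-\mu t\xi^{2}-\frac{i2Bt\xi}{b}+\frac{i2Bt\xi^{3}}{b^{3}},
\]
so that $\wh T(\xi, t)=\tfrac{1}{\sqrt{2\pi}}e^{\phi(\xi,t)}$, while $\wh G_{0}(\xi, t)=\tfrac{1}{\sqrt{2\pi}}e^{-\mu t\xi^{2}-i(2B/b)t\xi}$ and the Fourier multiplier of the correction term is $\tfrac{1}{\sqrt{2\pi}}\cdot(-\tfrac{2B}{b^{3}})t(i\xi)^{3}e^{-\mu t\xi^{2}-i(2B/b)t\xi}$. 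The point of introducing $\psi$ is that $e^{\psi}$ minus its first-order Taylor expansion in the small cubic term reproduces the heat kernel plus the prescribed correction; indeed $e^{\psi(\xi,t)}=e^{-\mu t\xi^2 - i(2B/b)t\xi}\big(1+\tfrac{i2Bt\xi^3}{b^3}+O((t\xi^3)^2)\big)$. So I split
\[
\wh T - \wh G_{0} + \tfrac{2B}{b^{3}}t(i\xi)^{3}\wh G_{0}
= \underbrace{\tfrac{1}{\sqrt{2\pi}}\big(e^{\phi}-e^{\psi}\big)}_{\text{(A): extra nonlocal term}}
+ \underbrace{\tfrac{1}{\sqrt{2\pi}}\Big(e^{\psi}-e^{-\mu t\xi^2-i(2B/b)t\xi}\big(1-\tfrac{i2Bt\xi^3}{b^3}\big)\Big)}_{\text{(B): Taylor remainder of the cubic}}.
\]
Wait — signs: since $\tfrac{i2Bt\xi^3}{b^3}$ appears with a plus in $\psi$ but I am adding $+\tfrac{2B}{b^3}t(i\xi)^3\wh G_0 = \tfrac{1}{\sqrt{2\pi}}(-\tfrac{i2B}{b^3})t\xi^3 e^{(\cdots)}$, the bookkeeping must be done carefully; the upshot is that (B) is the second-order Taylor remainder and (A) is governed by the quintic nonlocal exponent. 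Then I would estimate $\|\p_x^l(\cdots)\|_{L^q}$ by $\|\,(i\xi)^l(\cdots)\,\|_{L^{q'}}$ via Hausdorff--Young for $2\le q\le\infty$ (so $q'\in[1,2]$), which reduces everything to weighted $L^{q'}_\xi$ bounds.

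The key estimates are then elementary exponential bounds. On (A): write $e^{\phi}-e^{\psi}=e^{\psi}\big(e^{\phi-\psi}-1\big)$ with $\phi-\psi=-\tfrac{i2Bt\xi^5}{b^3(b^2+\xi^2)}$; using $|e^{iz}-1|\le |z|$ for real $z$, one gets $|e^{\phi}-e^{\psi}|\le e^{-\mu t\xi^2}\cdot \tfrac{2Bt|\xi|^5}{b^3(b^2+\xi^2)}\le C e^{-\mu t\xi^2} t|\xi|^5$ for $|\xi|\le 1$ (and a harmless global bound $|\xi|^5/(b^2+\xi^2)\le C\xi^2$ that still leaves $t|\xi|^2e^{-\mu t\xi^2}$ integrable, but the large-$\xi$ part is handled separately below). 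Multiplying by $|\xi|^l$ and taking the $L^{q'}$ norm over $|\xi|\le 1$, the Gaussian scaling $\int e^{-\mu t\xi^2}|\xi|^{\alpha}d\xi \sim t^{-(\alpha+1)/2}$ (or its $L^{q'}$ analogue) yields a factor $t\cdot t^{-\frac12(l+5+1/q')}$; converting $1/q'=1-1/q$ (from Hausdorff--Young) and reorganizing gives exactly $t^{-\frac12(1-1/q)-1-\frac l2}\cdot t^{-1/2}$, the ``$t^{-1/2}$'' term in the claimed bound. On (B): with $z:=\tfrac{2Bt\xi^3}{b^3}$ real, $|e^{iz}-(1+iz)|\le \tfrac12 z^2$, so $|e^\psi - e^{-\mu t\xi^2-i(2B/b)t\xi}(1+iz)|\le e^{-\mu t\xi^2}\cdot C t^2|\xi|^6$ for $|\xi|\le 1$; the same Gaussian-scaling computation gives $t^2\cdot t^{-\frac12(l+6+1/q')}=t^{-\frac12(1-1/q)-1-\frac l2-\frac12}$, i.e. again the $t^{-1/2}$-type term — so both pieces actually contribute to the secondary $t^{-1/2}$ factor and the ``$1$'' in $(1+t^{-1/2})$ comes from the large-frequency region.

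For $|\xi|\ge 1$ I would not expand at all: $|\wh T|\le \tfrac{1}{\sqrt{2\pi}}e^{-\mu t\xi^2}$, and $|\wh G_0|$, $|t\xi^3\wh G_0|$ are likewise dominated by $e^{-\mu t\xi^2}$ times polynomials in $\xi$; on $|\xi|\ge 1$ one has $e^{-\mu t\xi^2}\le e^{-\mu t\xi^2/2}e^{-\mu t/2}$ for $t\ge1$, giving exponential decay in $t$ (hence faster than any power and in particular $\le C t^{-\frac12(1-1/q)-1-\frac l2}$), while for $0<t\le1$ the whole right-hand side $t^{-\frac12(1-1/q)-1-\frac l2}(1+t^{-1/2})$ is already $\ge c t^{-\text{(something)}}$ and a direct crude bound on $\|\p_x^l T\|_{L^q}$ etc.\ via \eqref{T-basic} and \eqref{heat-decay} suffices (one may simply invoke \eqref{T-basic}, \eqref{T-G-linear}, \eqref{heat-decay} to absorb the small-$t$ range). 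I expect the main obstacle to be purely bookkeeping: getting all the signs and the order of the Taylor expansion right in the three-way split so that the heat kernel and the designated $t\p_x^3 G_0$ correction cancel exactly, and then tracking the powers of $t$ through the Hausdorff--Young exponent $1/q'=1-1/q$ to land precisely on $t^{-\frac12(1-1/q)-1-\frac l2}(1+t^{-1/2})$ uniformly in $q$. No genuinely hard analysis is involved once the split is set up; the estimates are all of Gaussian-moment type already used implicitly in Lemma~\ref{lem.T-G-linear} and Lemma~\ref{lem.heat-decay}.
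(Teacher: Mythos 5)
Your decomposition is exactly the one the paper uses: write $\hat T$ via \eqref{T-linear-2}, peel off the quintic nonlocal phase with $|e^{iz}-1|\le|z|$ and Taylor-expand the cubic phase to second order with $|e^{iz}-(1+iz)|\le\tfrac12 z^2$, arriving at the same pointwise remainder bound $C\left(t^{2}\xi^{6}+t\xi^{5}\right)e^{-\mu t\xi^{2}}$ as in \eqref{Remainder}; so this is essentially the paper's proof. The only methodological difference is the last step: you pass from the Fourier-side bound to $L^{q}$ by Hausdorff--Young with $1/q'=1-1/q$, which handles all $2\le q\le\infty$ in one stroke, whereas the paper does $q=2$ by Plancherel, $q=\infty$ by the Sobolev inequality \eqref{Sobolev-ineq}, and interpolates; both are valid, and your route also makes the small/large frequency split and the separate small-$t$ discussion unnecessary, since $\int_{\R}\xi^{\alpha q'}e^{-q'\mu t\xi^{2}}d\xi$ converges globally. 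One arithmetic slip in your narration: the cubic Taylor remainder $t^{2}\xi^{6}$ contributes $t^{2}\cdot t^{-\frac12\left(l+6+\frac{1}{q'}\right)}=t^{-\frac12\left(1-\frac1q\right)-1-\frac l2}$, i.e.\ the \emph{main} term, not an extra $t^{-1/2}$; it is the quintic piece $t\xi^{5}$ alone that produces the secondary factor $t^{-1/2}$, and the ``$1$'' in $\left(1+t^{-1/2}\right)$ does not come from the large-frequency region. This mislabeling does not affect the validity of the final estimate, since both contributions are dominated by $t^{-\frac12\left(1-\frac1q\right)-1-\frac l2}\left(1+t^{-\frac12}\right)$.
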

\begin{proof}
First, applying the Fourier transform to $T(x, t)$, then from \eqref{T-linear-2} and Taylor's theorem, there exist $\theta_{0}, \theta_{1} \in (0, 1)$ such that the following relation holds:
\begin{align}
\hat{T}(\xi, t)&=\frac{1}{\sqrt{2\pi}}\exp\left(-\mu t\xi^{2}-\frac{i2Bt\xi}{b}+\frac{i2Bt\xi^{3}}{b^{3}}\right)\left\{1-\frac{i2Bt\xi^{5}}{b^{3}(b^{2}+\xi^{2})}\exp\left(-\frac{i\theta_{1} 2Bt\xi^{5}}{b^{3}(b^{2}+\xi^{2})}\right)\right\} \nonumber \\
&=\frac{1}{\sqrt{2\pi}}\exp\left(-\mu t\xi^{2}-\frac{i2Bt\xi}{b}\right)\left\{1+\frac{i2Bt\xi^{3}}{b^{3}}+\frac{1}{2}\left(\frac{i2Bt\xi^{3}}{b^{3}}\right)^{2}\exp\left(\frac{i\theta_{0}2Bt\xi^{3}}{b^{3}}\right)\right\} \nonumber \\
&\ \ \ \ -\frac{i2Bt\xi^{5}}{\sqrt{2\pi}b^{3}(b^{2}+\xi^{2})}\exp\left(-\mu t\xi^{2}-\frac{i2Bt\xi}{b}+\frac{i2Bt\xi^{3}}{b^{3}}-\frac{i\theta_{1} 2Bt\xi^{5}}{b^{3}(b^{2}+\xi^{2})}\right) \nonumber \\
&=\frac{1}{\sqrt{2\pi}}\exp\left(-\mu t\xi^{2}-\frac{i2Bt\xi}{b}\right)+\frac{i2Bt\xi^{3}}{b^{3}}\frac{1}{\sqrt{2\pi}}\exp\left(-\mu t\xi^{2}-\frac{i2Bt\xi}{b}\right)+R(\xi, t) \nonumber \\
&=\hat{G}_{0}(\xi, t)-\frac{2B}{b^{3}}t(i\xi)^{3}\hat{G}_{0}(\xi, t)+R(\xi, t), \ \ \xi \in \R, \ t>0, \label{T-hat}
\end{align}
where the remainder term $R(\xi, t)$ is defined by 
\begin{align*}
R(\xi, t):=&-\frac{2B^{2}t^{2}\xi^{6}}{\sqrt{2\pi}b^{6}}\exp\left(-\mu t\xi^{2}-\frac{i2Bt\xi}{b}+\frac{i\theta_{0}2Bt\xi^{3}}{b^{3}}\right)\\
&-\frac{i2Bt\xi^{5}}{\sqrt{2\pi}b^{3}(b^{2}+\xi^{2})}\exp\left(-\mu t\xi^{2}-\frac{i2Bt\xi}{b}+\frac{i2Bt\xi^{3}}{b^{3}}-\frac{i\theta_{1} 2Bt\xi^{5}}{b^{3}(b^{2}+\xi^{2})}\right), \ \ \xi \in \R, \ t>0. 
\end{align*}
Here, we note that the following estimate holds: 
\begin{equation}\label{Remainder} 
|R(\xi, t)| \le C\left(t^{2}\xi^{6}+t\xi^{5}\right)e^{-\mu t\xi^{2}}, \ \ \xi \in \R, \ t>0. 
\end{equation}
Therefore, by using the Plancherel theorem, from \eqref{T-hat} and \eqref{Remainder}, we have 
\begin{align*}
&\left\|\p_{x}^{l}\left(T(\cdot, t)-G_{0}(\cdot, t)+\frac{2B}{b^{3}}t\p_{x}^{3}G_{0}(\cdot, t)\right)\right\|_{L^{2}}^{2} \\
&=\left\|(i\xi)^{l}\left(\hat{T}(\xi, t)-\hat{G}_{0}(\xi, t)+\frac{2B}{b^{3}}t(i\xi)^{3}\hat{G}_{0}(\xi, t)\right)\right\|_{L^{2}}^{2}
=\left\|(i\xi)^{l}R(\xi, t)\right\|_{L^{2}}^{2} \\ 
&\le C \int_{\R} \xi^{2l}\left(t^{2}\xi^{6}+t\xi^{5}\right)^{2}e^{-2\mu t\xi^{2}}d\xi
\le C \int_{\R} \left(t^{4}\xi^{2(l+6)}+t^{2}\xi^{2(l+5)}\right)e^{-2\mu t\xi^{2}}d\xi \\
&\le C\left(t^{-\frac{5}{2}-l}+t^{-\frac{7}{2}-l}\right)=Ct^{-\frac{5}{2}-l}\left(1+t^{-1}\right), \ \ t>0.
\end{align*}
Thus, we have the $L^{2}$-decay estimate:
\begin{equation}\label{linear-ap-L2}
\left\|\p_{x}^{l}\left(T(\cdot, t)-G_{0}(\cdot, t)+\frac{2B}{b^{3}}t\p_{x}^{3}G_{0}(\cdot, t)\right)\right\|_{L^{2}}
\le Ct^{-\frac{5}{4}-\frac{l}{2}}\left(1+t^{-\frac{1}{2}}\right), \ \ t>0.
\end{equation}

For the $L^{\infty}$-decay estimate, from the Sobolev inequality \eqref{Sobolev-ineq}, we can see that 
\begin{equation}\label{linear-ap-Linf}
\left\|\p_{x}^{l}\left(T(\cdot, t)-G_{0}(\cdot, t)+\frac{2B}{b^{3}}t\p_{x}^{3}G_{0}(\cdot, t)\right)\right\|_{L^{\infty}}
\le Ct^{-\frac{3}{2}-\frac{l}{2}}\left(1+t^{-\frac{1}{2}}\right), \ \ t>0.
\end{equation}
In addition, the desired estimate \eqref{asymp-linear} for $2<q<\infty$ can be easily obtained by using \eqref{linear-ap-L2}, \eqref{linear-ap-Linf} and the interpolation inequality \eqref{interpolation}, in the same way to get \eqref{u-Lq}. This completes the proof. 
\end{proof}

By using Young's inequality and Proposition~\ref{prop.asymp-linear}, we immediately have the following formula: 
\begin{cor}
\label{cor.asymp-linear}
Let $l$ be a non-negative integer and $2\le q\le \infty$. 
Suppose $u_{0}\in L^{1}(\R)$. Then, we have
\begin{equation}\label{sol-linear}
\left\|\p_{x}^{l}\left(T(t)*u_{0}-G_{0}(t)*u_{0}+\frac{2B}{b^{3}}t\p_{x}^{3}G_{0}(t)*u_{0}\right)\right\|_{L^{q}}\le C\|u_{0}\|_{L^{1}}t^{-\frac{1}{2}\left(1-\frac{1}{q}\right)-1-\frac{l}{2}}\left(1+t^{-\frac{1}{2}}\right), \ \ t>0,
\end{equation}
where $T(x, t)$ and $G_{0}(x, t)$ are defined by \eqref{T-linear} and \eqref{asymp-func}, respectively. 
\end{cor}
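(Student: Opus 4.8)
The plan is to recognize that Corollary~\ref{cor.asymp-linear} follows immediately from Proposition~\ref{prop.asymp-linear} by convolving against the fixed $L^{1}$ datum $u_{0}$ and invoking Young's inequality; no new analysis is required.

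First I would use the linearity of convolution together with the commutation $\p_{x}^{l}(f*g)=(\p_{x}^{l}f)*g$ to write
\begin{equation*}
\p_{x}^{l}\left(T(t)*u_{0}-G_{0}(t)*u_{0}+\frac{2B}{b^{3}}t\,\p_{x}^{3}G_{0}(t)*u_{0}\right)=\left[\p_{x}^{l}\left(T(\cdot,t)-G_{0}(\cdot,t)+\frac{2B}{b^{3}}t\,\p_{x}^{3}G_{0}(\cdot,t)\right)\right]*u_{0}.
\end{equation*}
For each fixed $t>0$ the bracketed kernel lies in $L^{q}(\R)$ for every $2\le q\le\infty$ — this is exactly the content of \eqref{asymp-linear} in Proposition~\ref{prop.asymp-linear}, and, for the individual pieces, of Lemma~\ref{lem.T-G-linear} and Lemma~\ref{lem.heat-decay} — so the convolution identity and the subsequent estimates are legitimate.

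Next I would apply Young's convolution inequality in the form $\|f*g\|_{L^{q}}\le\|f\|_{L^{q}}\|g\|_{L^{1}}$, valid for all $1\le q\le\infty$, with $f$ the bracketed kernel and $g=u_{0}$, obtaining
\begin{equation*}
\left\|\p_{x}^{l}\left(T(t)*u_{0}-G_{0}(t)*u_{0}+\frac{2B}{b^{3}}t\,\p_{x}^{3}G_{0}(t)*u_{0}\right)\right\|_{L^{q}}\le\left\|\p_{x}^{l}\left(T(\cdot,t)-G_{0}(\cdot,t)+\frac{2B}{b^{3}}t\,\p_{x}^{3}G_{0}(\cdot,t)\right)\right\|_{L^{q}}\|u_{0}\|_{L^{1}}.
\end{equation*}
Finally I would insert the estimate \eqref{asymp-linear} of Proposition~\ref{prop.asymp-linear} to bound the first factor on the right by $C\,t^{-\frac{1}{2}(1-\frac{1}{q})-1-\frac{l}{2}}(1+t^{-1/2})$, which yields precisely \eqref{sol-linear}.

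There is essentially no obstacle here: all the work has already been carried out in Proposition~\ref{prop.asymp-linear}, and the corollary merely transfers the kernel-level decay to the solution level by convolution. The only point worth a moment's attention is that the kernel estimate is available on the full range $2\le q\le\infty$, so that the endpoint cases $q=2$ and $q=\infty$ of Young's inequality are covered as well.
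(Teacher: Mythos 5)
Your proposal is correct and coincides with the paper's own argument: the corollary is obtained exactly by writing the expression as the kernel combination convolved with $u_{0}$, applying Young's inequality $\|f*g\|_{L^{q}}\le\|f\|_{L^{q}}\|g\|_{L^{1}}$, and inserting the estimate \eqref{asymp-linear} from Proposition~\ref{prop.asymp-linear}. Nothing further is needed.
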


Next, we shall introduce a key proposition to derive the leading term of the Duhamel term in \eqref{integral-eq}, in the case of $p>3$. The method for the proof of the following proposition is based on the technique used in the proof of Lemma~6.2 in \cite{FI21}. 
\begin{prop}
\label{prop.asymp-Duhamel}
Let $p>3$. Assume that $u_{0} \in H^{1}(\R) \cap L^{1}(\R)$ and $E_{0}=\|u_{0}\|_{H^{1}}+\|u_{0}\|_{L^{1}}$ is sufficiently small. 
Then, the solution $u(x, t)$ to \eqref{VFW} satisfies  \begin{equation}\label{asymp-nonlinear}
\lim_{t\to \infty}t^{\frac{1}{2}\left(1-\frac{1}{q}\right)+\frac{1}{2}}\left\|\int_{0}^{t}\p_{x}T(t-\tau)*\left(|u|^{p-1}u\right)(\tau)d\tau-\mathcal{M}\p_{x}G_{0}(\cdot, t)\right\|_{L^{q}}=0, 
\end{equation}
for any $2\le q \le \infty$, where $T(x, t)$, $G_{0}(x, t)$ and $\mathcal{M}$ are defined by \eqref{T-linear}, \eqref{asymp-func} and \eqref{M-m-notation}, respectively. 
\end{prop}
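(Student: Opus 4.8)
The plan is to follow the same structure used in Proposition~\ref{prop.asymp-Duhamel-pre} and Proposition~\ref{prop.nl-2<p<3}, but now extract the \emph{leading} contribution of the Duhamel term rather than just showing it is negligible. The key observation is the identity
\begin{align*}
&\int_{0}^{t}\p_{x}T(t-\tau)*\left(|u|^{p-1}u\right)(\tau)d\tau-\mathcal{M}\p_{x}G_{0}(\cdot, t) \\
&=\int_{0}^{t}\p_{x}\left(T-G_{0}\right)(t-\tau)*\left(|u|^{p-1}u\right)(\tau)d\tau
+\int_{0}^{t}\p_{x}G_{0}(t-\tau)*\left(|u|^{p-1}u\right)(\tau)d\tau-\mathcal{M}\p_{x}G_{0}(\cdot, t).
\end{align*}
The first term on the right is already handled by Proposition~\ref{prop.asymp-Duhamel-pre}: after multiplying by $t^{\frac{1}{2}(1-\frac{1}{q})+\frac{1}{2}}$ it tends to $0$. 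So everything reduces to showing that $\int_{0}^{t}\p_{x}G_{0}(t-\tau)*(|u|^{p-1}u)(\tau)d\tau$ is asymptotic to $\mathcal{M}\p_{x}G_{0}(\cdot, t)$ in the $L^{q}$-sense with the stated rate. Note that for $p>3$ the constant $\mathcal{M}=\int_{0}^{\infty}\int_{\R}(|u|^{p-1}u)(y,\tau)\,dy\,d\tau$ is finite, because \eqref{u^p-decay-Lq} with $q=1$ gives $\|(|u|^{p-1}u)(\cdot,\tau)\|_{L^{1}}\le CE_{0}(1+\tau)^{-\frac{p-1}{2}}$, which is integrable in $\tau$ precisely when $p>3$.

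\textbf{Main decomposition.} Write $f(\tau):=(|u|^{p-1}u)(\cdot,\tau)$ and $a(\tau):=\int_{\R}f(y,\tau)\,dy$, so that $\mathcal{M}=\int_{0}^{\infty}a(\tau)\,d\tau$. First I would split at $\tau=t/2$. On $[t/2,t]$, use Young's inequality with $\|\p_{x}G_{0}(\cdot,t-\tau)\|_{L^{r}}$ where $\tfrac1q+1=\tfrac1r+\tfrac12$ paired against $\|\p_{x}f(\tau)\|_{L^{1}}$ or, more simply, bound $\int_{t/2}^{t}\|\p_{x}G_{0}(t-\tau)\|_{L^{q}}\|f(\tau)\|_{L^{1}}\,d\tau\le CE_{0}\int_{t/2}^{t}(t-\tau)^{-\frac{1}{2}(1-\frac1q)-\frac12}(1+\tau)^{-\frac{p-1}{2}}\,d\tau\le CE_{0}t^{-\frac{1}{2}(1-\frac1q)-\frac{p-1}{2}}$, which is $o(t^{-\frac{1}{2}(1-\frac1q)-\frac12})$ since $p>3$. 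The same tail estimate applies to $\mathcal{M}-\int_{0}^{t/2}a(\tau)\,d\tau=\int_{t/2}^{\infty}a(\tau)\,d\tau$, which is $O(t^{-\frac{p-3}{2}})=o(1)$, so we may replace $\mathcal{M}$ by $\int_{0}^{t/2}a(\tau)\,d\tau$ with negligible error after multiplying by $t^{\frac{1}{2}(1-\frac1q)+\frac12}$.

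\textbf{Heart of the argument.} It remains to estimate
\[
\int_{0}^{t/2}\left(\p_{x}G_{0}(t-\tau)*f(\tau)-a(\tau)\p_{x}G_{0}(\cdot,t)\right)d\tau
\]
in $L^{q}$. For each fixed $\tau\le t/2$, write $\p_{x}G_{0}(t-\tau)*f(\tau)-a(\tau)\p_{x}G_{0}(\cdot,t)$ as the sum of (i) $\left(\p_{x}G_{0}(\cdot,t-\tau)-\p_{x}G_{0}(\cdot,t)\right)a(\tau)$, controlled via the $t$-derivative estimate $\|\p_{x}G_{0}(\cdot,t-\tau)-\p_{x}G_{0}(\cdot,t)\|_{L^{q}}\le C\tau\,(t-\tau)^{-\frac{1}{2}(1-\frac1q)-\frac32}$ as in \eqref{t-mean} combined with Lemma~\ref{lem.heat-decay}, and (ii) the spatial-localization remainder $\int_{\R}\left(\p_{x}G_{0}(x-y,t-\tau)-\p_{x}G_{0}(x,t-\tau)\right)f(y,\tau)\,dy$, which by Taylor expansion (as in \eqref{G-Taylor-1}) is bounded in $L^{q}$ by $\|\p_{x}^{2}G_{0}(\cdot,t-\tau)\|_{L^{q}}\int_{\R}|y||f(y,\tau)|\,dy\le C(t-\tau)^{-\frac{1}{2}(1-\frac1q)-1}\|y f(\cdot,\tau)\|_{L^{1}}$. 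For (i), since $(t-\tau)^{-\frac32}\sim t^{-\frac32}$ on $[0,t/2]$, one gets a contribution $\le Ct^{-\frac{1}{2}(1-\frac1q)-\frac32}\int_{0}^{t/2}\tau|a(\tau)|\,d\tau$; the $\tau$-integral is $O(1)$ when $p>4$ and $O(\log t)$ or a small power when $3<p\le4$, but in all cases the extra $t^{-1}$ beats the required rate. For (ii), I need an $L^{1}$-bound on $y\,(|u|^{p-1}u)(y,\tau)$: the natural route is the weighted estimate showing $\|x\,(|u|^{p-1}u)(\cdot,\tau)\|_{L^{1}}=o(\tau^{-\frac{p-3}{2}})$ or at least $O(\tau^{-\frac{p-1}{2}+\frac12})$, obtained by combining Lemma~\ref{lem.asymp-nonlinear-2}, Proposition~\ref{prop.asymp-nonlinear} with the known first-order profile $MG_{0}$ (whose own weighted norm $\|x\,|MG_{0}|^{p-1}MG_{0}(\cdot,\tau)\|_{L^{1}}$ scales like $\tau^{-\frac{p-2}{2}+\frac12}$) and the difference estimate \eqref{asymp-nonlinear-1}. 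The resulting $\tau$-integral $\int_{0}^{t/2}(t-\tau)^{-\frac{1}{2}(1-\frac1q)-1}\|y f(\cdot,\tau)\|_{L^{1}}\,d\tau\sim t^{-\frac{1}{2}(1-\frac1q)-1}\int_{0}^{t/2}\|yf(\cdot,\tau)\|_{L^{1}}\,d\tau$ again carries a surplus $t^{-1/2}$ over the target rate, so the whole expression is $o(t^{-\frac{1}{2}(1-\frac1q)-\frac12})$ as required. Putting the three pieces together yields \eqref{asymp-nonlinear}.

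\textbf{Main obstacle.} The delicate step is item (ii): securing a good decay rate for the weighted $L^{1}$-norm $\|x\,(|u|^{p-1}u)(\cdot,\tau)\|_{L^{1}}$, since the naive bound $\|x f(\cdot,\tau)\|_{L^{1}}\le\|u(\cdot,\tau)\|_{L^{\infty}}^{p-2}\|u(\cdot,\tau)\|_{L^{2}}\|x u(\cdot,\tau)\|_{L^{2}}$ requires an $L^{2}$ bound on $x u(\cdot,\tau)$, which is not among the a priori estimates proved in Section~2. The clean way around this is to never estimate $\|x f\|_{L^{1}}$ directly but instead exploit the center-of-mass identity: one subtracts and adds the profile term $(|MG_{0}|^{p-1}MG_{0})(\cdot,\tau)$, for which the weighted integral is explicit and small, and bounds the remainder $x\,(f-|MG_{0}|^{p-1}MG_{0})$ using \eqref{asymp-nonlinear-1} together with the Gaussian decay of $G_{0}$; alternatively, one integrates by parts in $x$ using $x\,\p_{x}(|u|^{p-1}u)=\p_{x}(x|u|^{p-1}u)-|u|^{p-1}u$ to trade the weight for a derivative that is already controlled. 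Either route keeps the argument within the estimates established earlier in the paper.
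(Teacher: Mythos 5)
Your overall skeleton matches the paper's: reduce to the $G_{0}$-kernel via Proposition~\ref{prop.asymp-Duhamel-pre}, peel off the tail $\int_{t}^{\infty}a(\tau)d\tau\,\p_{x}G_{0}$, and show that $\int_{0}^{t}\p_{x}G_{0}(t-\tau)*f(\tau)d\tau$ is asymptotic to $\mathcal{M}\p_{x}G_{0}$. The gap is in the ``heart of the argument.'' Your decomposition into (i) and (ii), pivoting on $\p_{x}G_{0}(x,t-\tau)$, destroys a drift cancellation that is essential for $3<p\le 4$. Since $G_{0}(x,t-\tau)=G(x-\alpha(t-\tau),t-\tau)$ with $\alpha=2B/b$, the difference in (i) involves not only a time change but also a spatial translation of the center by $\alpha\tau$; hence the correct bound is $\|\p_{x}G_{0}(\cdot,t-\tau)-\p_{x}G_{0}(\cdot,t)\|_{L^{q}}\le C\tau(t-\tau)^{-\frac{1}{2}(1-\frac{1}{q})-1}$, not the rate $-\frac32$ you quote from \eqref{t-mean} (which compares two Gaussians with the \emph{same} center $\alpha t$). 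Likewise in (ii) the weight $\|yf(\cdot,\tau)\|_{L^{1}}$ is uncentered and carries the drift contribution $\alpha\tau\|f(\tau)\|_{L^{1}}\sim\tau^{-\frac{p-3}{2}}$, not your target $\tau^{-\frac{p-2}{2}}$. Each piece then contributes $t^{-\frac{1}{2}(1-\frac1q)-1}\int_{0}^{t/2}\tau^{-\frac{p-3}{2}}d\tau\sim t^{-\frac{1}{2}(1-\frac1q)+\frac{3-p}{2}}$, which is $o\bigl(t^{-\frac{1}{2}(1-\frac1q)-\frac12}\bigr)$ only for $p>4$. The two drift errors cancel in the sum, but your term-by-term estimation cannot see this.

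Even granting the recentering, your plan still hinges on a weighted bound $\|(y-\alpha\tau)f(\cdot,\tau)\|_{L^{1}}\lesssim\tau^{\frac12}\|f(\tau)\|_{L^{1}}$, and neither workaround you propose produces it: \eqref{asymp-nonlinear-1} carries no spatial weight, so it cannot control $\|x(f-|MG_{0}|^{p-1}MG_{0})\|_{L^{1}}$; and the identity $x\,\p_{x}(|u|^{p-1}u)=\p_{x}(x|u|^{p-1}u)-|u|^{p-1}u$ merely relocates the weight without bounding it (no propagation of $xu\in L^{1}$ or $xu\in L^{2}$ is proved anywhere in Section~2). The paper's proof sidesteps weighted estimates entirely: after the change of variables $z=y-\alpha\tau$ it compares $\p_{x}G(x-\alpha t-z,t-\tau)$ directly with $\p_{x}G(x-\alpha t,t)$ and splits the $(z,\tau)$ domain into $\{\tau\ge\e t/2\}$, $\{\tau\le\e t/2,\ |z|\ge\e\sqrt{t}\}$ and $\{\tau\le\e t/2,\ |z|\le\e\sqrt{t}\}$. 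On the last region the mean value theorem yields $C(t-\tau)^{-\frac{1}{2}(1-\frac1q)-1}|z|+C(t-\tau)^{-\frac{1}{2}(1-\frac1q)-\frac32}\tau\le C\e\,t^{-\frac{1}{2}(1-\frac1q)-\frac12}$, which is paired with the \emph{finite total mass} $\iint|f|\,dz\,d\tau<\infty$; on the middle region that same finiteness plus dominated convergence forces the mass to vanish as $t\to\infty$. Adopting this $\e$-splitting (and letting $\e\downarrow0$ at the end) is the missing idea; without it your argument covers only $p>4$.
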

\begin{proof}
By virtue of Proposition~\ref{prop.asymp-Duhamel-pre}, it is sufficient to show the following formula: 
\begin{equation}\label{asymp-nonlinear-G0}
\lim_{t\to \infty}t^{\frac{1}{2}\left(1-\frac{1}{q}\right)+\frac{1}{2}}\left\|\int_{0}^{t}\p_{x}G_{0}(t-\tau)*\left(|u|^{p-1}u\right)(\tau)d\tau-\mathcal{M}\p_{x}G_{0}(\cdot, t)\right\|_{L^{q}}=0, \ \ 2\le q\le \infty.
\end{equation}
In what follows, we shall prove \eqref{asymp-nonlinear-G0}. First, from the definition of \eqref{M-m-notation}, we have 
\begin{align}
&\int_{0}^{t}\p_{x}G_{0}(t-\tau)*\left(|u|^{p-1}u\right)(\tau)d\tau-\mathcal{M}\p_{x}G_{0}(x, t) \nonumber \\
&=\int_{0}^{t}\int_{\R}\p_{x}G_{0}(x-y, t-\tau)\left(|u|^{p-1}u\right)(y, \tau)dyd\tau-\left(\int_{0}^{\infty}\int_{\R}\left(|u|^{p-1}u\right)(y, \tau)dyd\tau\right)\p_{x}G_{0}(x, t) \nonumber \\
&=\int_{0}^{t}\int_{\R}\left(\p_{x}G_{0}(x-y, t-\tau)-\p_{x}G_{0}(x, t)\right)\left(|u|^{p-1}u\right)(y, \tau)dyd\tau \nonumber \\
&\ \ \ \ -\left(\int_{t}^{\infty}\int_{\R}\left(|u|^{p-1}u\right)(y, \tau)dyd\tau\right)\p_{x}G_{0}(x, t) 
=:X(x, t)+Y(x, t). \label{X-Y}
\end{align}

Next, let us evaluate $X(x, t)$. Before doing that, for the latter sake, we shall rewrite $G_{0}(x, t)$. Now, recalling \eqref{asymp-func} and \eqref{alpha-DEF}, we can see that 
\begin{align*}
G_{0}(x-y, t-\tau)=G(x-y-\alpha(t-\tau), t-\tau), \quad G_{0}(x, t)=G(x-\alpha t, t), \quad \alpha=\frac{2B}{b}.
\end{align*}
Here, we take small $\e>0$. By using the change of variable and splitting the integral, we get 
\begin{align}
X(x, t)&=\int_{0}^{t}\int_{\R}\left(\p_{x}G_{0}(x-y, t-\tau)-\p_{x}G_{0}(x, t)\right)\left(|u|^{p-1}u\right)(y, \tau)dyd\tau \nonumber \\
&=\int_{0}^{t}\int_{\R}\left(\p_{x}G(x-y-\alpha(t-\tau), t-\tau)-\p_{x}G(x-\alpha t, t)\right)\left(|u|^{p-1}u\right)(y, \tau)dyd\tau \nonumber\\
&=\int_{0}^{t}\int_{\R}\left(\p_{x}G(x-\alpha t-z, t-\tau)-\p_{x}G(x-\alpha t, t)\right)\left(|u|^{p-1}u\right)(z+\alpha \tau, \tau)dzd\tau \nonumber\\
&=\int_{\e t/2}^{t}\int_{\R}\left(\p_{x}G(x-\alpha t-z, t-\tau)-\p_{x}G(x-\alpha t, t)\right)\left(|u|^{p-1}u\right)(z+\alpha \tau, \tau)dzd\tau \nonumber\\
&\ \ \ \ +\int_{0}^{\e t/2}\int_{|z|\ge \e \sqrt{t}}\left(\p_{x}G(x-\alpha t-z, t-\tau)-\p_{x}G(x-\alpha t, t)\right)\left(|u|^{p-1}u\right)(z+\alpha \tau, \tau)dzd\tau \nonumber\\
&\ \ \ \ +\int_{0}^{\e t/2}\int_{|z|\le \e \sqrt{t}}\left(\p_{x}G(x-\alpha t-z, t-\tau)-\p_{x}G(x-\alpha t, t)\right)\left(|u|^{p-1}u\right)(z+\alpha \tau, \tau)dzd\tau \nonumber\\
&=:X_{1}(x, t)+X_{2}(x, t)+X_{3}(x, t). \label{X-split} 
\end{align}
Now, let us evaluate $X_{1}(x, t)$ to $X_{3}(x, t)$. First for $X_{1}(x, t)$, from Young's inequality, \eqref{est.Gauss}, \eqref{u^p-decay-Lq} and \eqref{d_u^p-decay-Lr}, we obtain 
\begin{align}
\left\|X_{1}(\cdot, t)\right\|_{L^{q}}
&= \left\|\int_{\e t/2}^{t}\int_{\R}\left(\p_{x}G(\cdot-\alpha t-z, t-\tau)-\p_{x}G(\cdot-\alpha t, t)\right)\left(|u|^{p-1}u\right)(z+\alpha \tau, \tau)dzd\tau\right\|_{L^{q}} \nonumber \\
&\le \left\|\int_{\e t/2}^{t}\int_{\R}\p_{x}G(\cdot-z, t-\tau)\left(|u|^{p-1}u\right)(z+\alpha \tau, \tau)dzd\tau\right\|_{L^{q}} \nonumber\\
&\ \ \ \ +\left\|\p_{x}G(\cdot, t)\right\|_{L^{q}}\int_{\e t/2}^{t}\int_{\R}\left|\left(|u|^{p-1}u\right)(z+\alpha \tau, \tau)\right|dzd\tau \nonumber\\
&\le \int_{\e t/2}^{t}\left\|G(\cdot, t-\tau)\right\|_{L^{2}}\left\|\p_{x}\left(|u|^{p-1}u\right)(\cdot+\alpha \tau, \tau)\right\|_{L^{r}}d\tau \ \ \left(\frac{1}{q}+1=\frac{1}{2}+\frac{1}{r}\right) \nonumber\\
&\ \ \ \ +\left\|\p_{x}G(\cdot, t)\right\|_{L^{q}}\int_{\e t/2}^{t}\left\|\left(|u|^{p-1}u\right)(\cdot+\alpha \tau, \tau)\right\|_{L^{1}}d\tau \nonumber\\
&\le CE_{0}\int_{\e t/2}^{t}(t-\tau)^{-\frac{1}{4}}(1+\tau)^{-\frac{1}{2}\left(\frac{1}{2}-\frac{1}{q}\right)-\frac{p}{2}}d\tau
+CE_{0}t^{-\frac{1}{2}\left(1-\frac{1}{q}\right)-\frac{1}{2}}\int_{\e t/2}^{t}(1+\tau)^{-\frac{p-1}{2}}d\tau  \nonumber\\
&\le CE_{0}\e^{-\frac{1}{4}+\frac{1}{2q}-\frac{p}{2}}\left(1-\frac{\e}{2}\right)^{\frac{3}{4}}t^{-\frac{1}{2}\left(1-\frac{1}{q}\right)-\frac{p-2}{2}} \nonumber \\
&\ \ \ \ +CE_{0}\e^{-\frac{p-3}{2}}t^{-\frac{1}{2}\left(1-\frac{1}{q}\right)-\frac{p-2}{2}}, \ \ t>0, \ 2\le q\le \infty. \label{X1}
\end{align}
Next, let us treat $X_{2}(x, t)$. Similarly as \eqref{X1}, we can easily have 
\begin{align}
\|X_{2}(\cdot, t)\|_{L^{q}}
&\le \int_{0}^{\e t/2}\int_{|z|\ge \e\sqrt{t}}\left(\left\|\p_{x}G(\cdot-\alpha t-z, t-\tau)\right\|_{L^{q}}+\left\|\p_{x}G(\cdot-\alpha t, t-\tau)\right\|_{L^{q}}\right)\nonumber \\
&\ \ \ \ \times \left|\left(|u|^{p-1}u\right)(z+\alpha \tau, \tau)\right|dzd\tau
\le Ct^{-\frac{1}{2}\left(1-\frac{1}{q}\right)-\frac{1}{2}}Z(t), \ \ t>0, \ 2\le q\le \infty, \label{X2}
\end{align}
where $Z(t)$ is defined by 
\[Z(t):=\int_{0}^{\e t/2}\int_{|z|\ge \e\sqrt{t}}\left|\left(|u|^{p-1}u\right)(z+\alpha \tau, \tau)\right|dzd\tau.\]
In addition, applying Lebesgue's dominated convergence theorem, we are able to see 
\begin{equation}\label{Z}
\lim_{t\to \infty}Z(t)=0, 
\end{equation}
because it follows from \eqref{u^p-decay-Lq} that  
\begin{align}
|\mathcal{M}|
&\le \int_{0}^{\infty}\int_{\R}\left|\left(|u|^{p-1}u\right)(y, \tau)\right|dyd\tau
=\int_{0}^{\infty}\left\|\left(|u|^{p-1}u\right)(\cdot, \tau)\right\|_{L^{1}}d\tau  \nonumber \\
&\le CE_{0}\int_{0}^{\infty}(1+\tau)^{-\frac{p-1}{2}}d\tau
\le CE_{0}<\infty, \ \ p>3. \label{mathM-ok}
\end{align}
Finally, we shall treat $X_{3}(x, t)$. By using the mean value theorem and \eqref{est.Gauss}, we have 
\begin{align*}
&\left\|\p_{x}G(\cdot-\alpha t-z, t-\tau)-\p_{x}G(\cdot-\alpha t, t)\right\|_{L^{q}}
=\left\|\p_{x}G(\cdot-z, t-\tau)-\p_{x}G(\cdot, t)\right\|_{L^{q}} \\
&\le \left\|\p_{x}G(\cdot-z, t-\tau)-\p_{x}G(\cdot, t-\tau)\right\|_{L^{q}} +\left\|\p_{x}G(\cdot, t-\tau)-\p_{x}G(\cdot, t)\right\|_{L^{q}} \\
&\le C(t-\tau)^{-\frac{1}{2}\left(1-\frac{1}{q}\right)-1}|z|+C(t-\tau)^{-\frac{1}{2}\left(1-\frac{1}{q}\right)-\frac{3}{2}}\tau  \\
&\le C\e t^{-\frac{1}{2}\left(1-\frac{1}{q}\right)-\frac{1}{2}}, \ \ |z|\le \e \sqrt{t}, \ 0\le \tau \le \e t/2.
\end{align*}
Thus, combining the fact \eqref{mathM-ok} and the above estimate, we obtain  
\begin{align}
\left\|X_{3}(\cdot, t)\right\|_{L^{q}}
&\le \int_{0}^{\e t/2}\int_{|z|\le \e\sqrt{t}}\left\|\p_{x}G(\cdot-\alpha t-z, t-\tau)-\p_{x}G(\cdot-\alpha t, t)\right\|_{L^{q}} \nonumber \\
&\ \ \ \ \times\left|\left(|u|^{p-1}u\right)(z+\alpha \tau, \tau)\right|dzd\tau
\le C\e t^{-\frac{1}{2}\left(1-\frac{1}{q}\right)-\frac{1}{2}}, \ \ t>0, \ 2\le q\le \infty. \label{X3}
\end{align}

On the other hand, from \eqref{u^p-decay-Lq}, we can easily have 
\begin{align}
\left\|Y(\cdot, t)\right\|_{L^{q}}
&\le \left\|\p_{x}G_{0}(\cdot, t)\right\|_{L^{q}}\int_{t}^{\infty}\int_{\R}\left|\left(|u|^{p-1}u\right)(y, \tau)\right|dyd\tau 
=\left\|\p_{x}G_{0}(\cdot, t)\right\|_{L^{q}}\int_{t}^{\infty}\left\|\left(|u|^{p-1}u\right)(\cdot, \tau)\right\|_{L^{1}}d\tau \nonumber \\
&\le CE_{0}t^{-\frac{1}{2}\left(1-\frac{1}{q}\right)-\frac{1}{2}}\int_{t}^{\infty}(1+\tau)^{-\frac{p-1}{2}}d\tau 
\le CE_{0}t^{-\frac{1}{2}\left(1-\frac{1}{q}\right)-\frac{p-2}{2}}, \ \ t>0, \ 2\le q\le \infty. \label{Y-est}
\end{align} 

Summarizing up \eqref{X-split}, \eqref{X1} through \eqref{Y-est}, for $p>3$, we eventually arrive at 
\[\limsup_{t\to \infty}t^{\frac{1}{2}\left(1-\frac{1}{q}\right)+\frac{1}{2}}\left\|\int_{0}^{t}\p_{x}G_{0}(t-\tau)*\left(|u|^{p-1}u\right)(\tau)d\tau-\mathcal{M}\p_{x}G_{0}(\cdot, t)\right\|_{L^{q}}\le C\e, \ \ 2\le q\le \infty.\]
Thus, we obtain \eqref{asymp-nonlinear-G0} because $\e>0$ can be chosen arbitrarily small. 
Therefore, combining Proposition~\ref{prop.asymp-Duhamel-pre} and \eqref{asymp-nonlinear-G0}, 
we arrive at the desired result \eqref{asymp-nonlinear}. This completes the proof. 
\end{proof}

\begin{proof}[\rm{\bf{End of the Proof of Theorem~\ref{thm.main}~for~$\bm{p>3}$}}]
We note that the following relation holds: 
\begin{align}
&u(x, t)-MG_{0}(x, t)+\left(m+\mathcal{M}\right)\p_{x}G_{0}(x, t)+\frac{2BM}{b^{3}}t\p_{x}^{3}G_{0}(x, t) \nonumber \\
&=\left\{T(t)*u_{0}-G_{0}(t)*u_{0}+\frac{2B}{b^{3}}t\p_{x}^{3}G_{0}(t)*u_{0}\right\} \nonumber \\
&\ \ \ \,+\left\{G_{0}(t)*u_{0}-MG_{0}(x, t)+m\p_{x}G_{0}(x, t)\right\}
-\frac{2B}{b^{3}}t\p_{x}^{3}\left(G_{0}(t)*u_{0}-MG_{0}(x, t)\right) \nonumber \\
&\ \ \ \,-\left\{\int_{0}^{t}\p_{x}T(t-\tau)*\left(|u|^{p-1}u\right)(\tau)d\tau-\mathcal{M}\p_{x}G_{0}(x, t)\right\}.  \label{sol-split-p>3}
\end{align}
Therefore, from \eqref{sol-split-p>3}, Corollary~\ref{cor.asymp-linear}, Propositions~\ref{prop.asymp-heat} and \ref{prop.asymp-Duhamel}, 
we can conclude that the asymptotic formula \eqref{u-asymp-p>3} is true. This completes the proof of Theorem~\ref{thm.main} for $p>3$. 
\end{proof}

\section*{Acknowledgment}
This study is supported by Grant-in-Aid for Young Scientists Research No.22K13939, Japan Society for the Promotion of Science. 




\bigskip
\par\noindent
\begin{flushleft}Ikki Fukuda\\
Division of Mathematics and Physics, \\
Faculty of Engineering, \\
Shinshu University\\
4-17-1, Wakasato, Nagano, 380-8553, JAPAN\\
E-mail: i\_fukuda@shinshu-u.ac.jp
\end{flushleft}

\end{document}